\newcommand{\age}{{\mathcal A}}
\newcommand{\profile}{\varphi}
\newcommand{\N}{\mathbb N}
\newcommand{\C}{\mathbb C}
\newcommand{\Z}{\mathbb Z}
\newcommand{\Q}{\mathbb Q}
\newcommand{\T}{\mathbb T}
\newcommand{\K}{\mathbb K}
\newcommand{\hilbert}{{\mathcal H}}
\newtheorem{definition}{{\bf Definition}}[section]
\newtheorem{theorem}[definition]{{\bf Theorem}}
\newtheorem{conjectures}[definition]{{\bf Conjectures}}
\newtheorem{corollary}[definition]{{\bf Corollary}}
\newtheorem{proposition}[definition]{\noindent {\bf Proposition}}
\newtheorem{lemma}[definition]{\noindent {\bf Lemma}}
\newtheorem{claim}[definition]{\noindent {\bf Claim}}
\newtheorem{example}[definition]{\noindent {\bf Example}}
\newtheorem{fact}[definition]{\noindent{\bf Fact}}
\newtheorem{problem}{Problem}
\newtheorem{remark}[definition]{\noindent {\bf Remark}}
\def\endproof{\hfill {\kern 6pt\penalty 500
   \raise -0pt\hbox{\vrule \vbox to5pt {\hrule width 5pt
  \vfill\hrule}\vrule}}}
\def\References[definition]{\bf R\'ef\'erences}
\def\endproof{\hfill {\kern 6pt\penalty 500
\raise -0pt\hbox{\vrule \vbox to5pt {\hrule width 5pt
\vfill\hrule}\vrule}}}
\def\GenShuffle#1#2#3{\mathbin{
      \hbox{\vbox{
        \hbox{\vrule
              \hskip#2
              \vrule height#1 width 0pt
               }%
        \hrule\vskip#3}%
             \vbox{
        \hbox{\vrule
              \hskip#2
              \vrule height#1 width 0pt
               \vrule }%
        \hrule\vskip#3}%
}}}
\def\shuffle{\,{\mathchoice{\GenShuffle{5pt}{3.5pt}{1pt}}%
                           {\GenShuffle{4pt}{3pt}{1pt}}%
                           {\GenShuffle{3pt}{2pt}{0.7pt}}%
                           {\GenShuffle{2pt}{1.5pt}{0.5pt}}}\,}%
\begin{document}

\title[A conjecture of P.J. Cameron]{When the  orbit algebra of group is an integral domain? Proof of a conjecture of P.J. Cameron}
\thanks{Research done under the auspices of Intas programme 03-51-4110 "Universal algebra and lattice theory" }
\author{Maurice Pouzet}
\address{ICJ, Math\'ematiques, Universit\'e Claude-Bernard Lyon1,
Domaine de Gerland, B\^at. Recherche [B], 50 avenue Tony-Garnier, F69365  Lyon cedex 07, France, e-mail: pouzet@univ-lyon1.fr}



\keywords {Relational structures, ages, counting functions, oligomorphic groups, age algebra, Ramsey theorem, integral domain. }
\subjclass{03 C13, 03 C52,  05 A16, 05 C30, 20 B27}

\date{\today}

\begin{abstract} P.J.Cameron introduced the orbit algebra of a permutation group and conjectured that this algebra is an integral domain if and
only if the group has no finite orbit. We prove that this conjecture holds and in fact that the age algebra of a
relational structure $R$ is an integral domain if and only if $R$ is age-inexhaustible. We deduce these results from a
combinatorial lemma asserting that if a product of two  non-zero elements of a set algebra   is zero then there is a
finite  common tranversal of their supports. The proof is built on Ramsey theorem and the integrity of a shuffle algebra.
 \end{abstract}\maketitle

\section* { Introduction}
In 1981,  P.J.Cameron \cite{cameron81} (see also \cite {cameron3} p.86) 
associated a graded algebra $A[G]$ to a permutation group  $G$ acting on  an infinite set $E$. He formulated two conjectures on the integrity of this algebra. The purpose of this paper is to present a solution to the first  of these conjectures. Consequences on the enumeration of  finite substructures of a given structure are mentionned. Some problems are stated.
\subsection{The conjectures}
Here is the content of these conjectures, freely adapted from  Cameron's web page (see Problem 2 \cite{cameron4}). 
The  graded algebra $A[G]$ is the direct sum $$\sum_{n<\omega} A[G]_{n}$$ where $A[G]_{n}$ is the set of
all $G$-invariant functions $f$ from the set $[E]^{n}$ of $n$-element subsets of $E$ into  the field $\C$ of complex numbers. 
Multiplication is defined by the rule that if $f \in A[G]_{m}$, $g \in A[G]_{n}$ and $Q$ is an $(m+n)$-element subset of $E$
then 
\begin{equation}\label{eq:product}(fg)(Q):= \sum_{P\in [Q]^{m}}  f(P)g(Q\setminus P) 
\end{equation}
As shown by Cameron, the constant function $e$ in $A[G]_{1}$ (with value $1$ on every one element set) is not a zero-divisor (see Theorem \ref{cameron} below). The group $G$ is {\it entire} if $A[G]$
is an integral domain, and {\it strongly entire} if $A[G]/eA[G]$ is an integral domain. \\
\begin{conjectures}
$G$ is (strongly) entire if and only if it has no finite orbit on $E$. 
 \end{conjectures}
The condition that $G$ has no finite orbit on $E$ is necessary.  We prove that it suffices for $G$ to be
entire.  As it turns out, our proof extends to
the algebra of an age, also invented by Cameron \cite{cameron4}.
\subsection{The  algebra of an age}
A {\it relational structure} is a realization of a language whose non-logical symbols are predicates. This is a pair $R:=(E,(\rho_{i})_{i\in I})$ made of a set $E$ and a family of $m_i$-ary relations $\rho_i$ on $E$.  The set $E$ is the \emph{domain} or \emph{base} of $R$; the family $\mu:= (m_i)_{i\in I}$ is the \emph{signature} of $R$. The \emph{substructure induced by $R$ on a subset $A$} of $E$, simply called the \emph{restriction of $R$ to $A$}, is the relational structure $R_{\restriction A}:= (A, (A^{m_i}\cap \rho_i)_{i\in I})$.  Notions of \emph{isomorphism}, as well as \emph{isomorphic type}, are defined in natural way (see Subsection  \ref{sec:proof}).

 A map
$f: [E]^m\rightarrow \C$, where $m$ is a non negative integer,  is \emph{$R$-invariant} if $f(P)=f(P')$ whenever the
restrictions $R_{\vert P}$ and $R_{\vert P'}$ are isomorphic. The $R$-invariant maps can be multiplied. Indeed, it is not difficult to show that if $f: [E]^m\rightarrow \C$ and $g:[E]^n\rightarrow \C$ are $R$-invariant,  the product defined by Equation (\ref{eq:product}) is $R$-invariant. Equipped with this multiplication,  the  $\C$-vector space spanned by the  $R$-invariant maps becomes a graded  algebra, the \emph{age algebra of $R$}, that we denote by $\C.\age(R)$. The name, coined by Cameron, comes from the notion of \emph {age} defined  by Fra{\"\i}ss{\'e} \cite{fraissetr}. Indeed, the {\it age} of 
$R$ is the collection
${\mathcal{A}}(R)$ of   substructures of
$R$ induced on the finite subsets of $R$, isomorphic substructures being identified. And it can be  shown that two relational structures with the same age yields the  same algebra (up to an isomorphism of graded algebras). 

The algebra associated to a group is a special case of age algebra. Indeed, to  a permutation group $G$ acting on $E$ we may associate a relational structure $R$ with base $E$ such that the $G$-invariant maps coincide with the $R$-invariant maps. 

Our  criterium  for the integrity of the age algebra is based on the notion of kernel: 
  
The \emph{ kernel } of  a relational structure $R$ is
the subset $K(R)$ of $x\in 
E$ such that  ${\mathcal{A}}(R_{\vert E\setminus \{x\}})\not = {\mathcal{A}}(R)$. 

 The emptyness of the kernel $R$ is a necessary condition for the integrity of the age algebra. Indeed, if $K(R)\not=\emptyset$, pick $x\in K(R)$ and $F\in [E]^{<\omega}$ such that $R_{\restriction F}\in {\mathcal{A}}(R)\setminus {\mathcal{A}}(R_{\vert E\setminus \{x\}})$. Let $P\in[E]^{<\omega}$. Set $f(P):=1$ if $R_{\restriction P}$ is isomorphic to $R_{\restriction F}$, otherwise set $f(P):=0$. Then $f^2:=f f=0$.

\begin{theorem}\label{agealgebranozero} Let $R$ be a relational structure with possibly infinitely many non isomorphic types of $n$-element substructures. The age algebra $\C.\age(R)$ is an integral domain if and only if the kernel of $R$ is empty. 
\end{theorem}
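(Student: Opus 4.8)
The necessity of the kernel being empty is already shown in the excerpt (the explicit construction of $f$ with $f^2=0$ when $K(R)\neq\emptyset$), so the task is to prove the converse: if $K(R)=\emptyset$ then $\C.\age(R)$ has no zero divisors. The abstract tells us the engine: a combinatorial lemma about set algebras, Ramsey's theorem, and the integrity of a shuffle algebra. So the plan is to first isolate the right abstract framework. I would work in the \emph{set algebra} on $E$ — the algebra of all maps $\bigcup_n [E]^n \to \C$ with the product (\ref{eq:product}) — and recall that for $f$ of degree $m$ its \emph{support} is essentially the finite subsets (or rather the union of the finite subsets) on which $f$ does not vanish. The central lemma to aim for is: \emph{if $f,g$ are nonzero elements of the set algebra with $fg=0$, then the supports of $f$ and $g$ admit a finite common transversal}, i.e. there is a finite set $T\subseteq E$ meeting every set in $\mathrm{supp}(f)$ and every set in $\mathrm{supp}(g)$. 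Granting this lemma, the theorem follows quickly: if $f,g\in\C.\age(R)$ are nonzero with $fg=0$, the lemma gives a finite transversal $T$; since $T$ is finite and $K(R)=\emptyset$, there is some $x\in E$ whose removal does not change the age, yet $x$ can be chosen (after translating $f,g$ by an age-automorphism if necessary — here invariance is used) to lie outside... actually more carefully: because $f$ is $R$-invariant and $\mathrm{supp}(f)$ is thus a union of isomorphism classes, $K(R)=\emptyset$ forces $\mathrm{supp}(f)$ to contain sets avoiding any prescribed finite set, contradicting the existence of a finite transversal unless $f=0$.

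Next I would prove the combinatorial lemma, which is the heart of the matter. Suppose $fg=0$ with $f$ of degree $m$, $g$ of degree $n$, and suppose for contradiction there is no finite common transversal of their supports. By a transversal/compactness argument (a $\Delta$-system or sunflower-type argument, or a direct Ramsey-theoretic extraction) one should be able to find an infinite sequence of pairwise disjoint sets $A_1, A_2, \ldots \in \mathrm{supp}(f)$ and similarly disjoint sets $B_1, B_2, \ldots \in \mathrm{supp}(g)$, and indeed arrange everything to live on a single countable subset $D\subseteq E$ with a nice combinatorial structure. The idea is then to pass to a \emph{shuffle algebra} model: order $D$ as $\omega$, and encode an element of the set algebra supported on $D$ by a formal sum of words; the product (\ref{eq:product}) becomes (a weighted form of) the shuffle product on words, because choosing a size-$m$ subset $P$ of a size-$(m+n)$ set $Q$ and looking at $P$ and $Q\setminus P$ is exactly an interleaving. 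Here is where Ramsey's theorem enters: to make the encoding work one needs the values of $f$ on $[D]^m$ (and $g$ on $[D]^n$) to depend only on the \emph{order type} of the subset relative to the fixed ordering of $D$ — i.e. to be constant on "ordered isomorphism classes." Ramsey's theorem (in the form: color $[ \omega ]^m$ with finitely... or rather with the finitely-many relevant color classes) lets us pass to an infinite subset $D'\subseteq D$ on which $f$ is "order-invariant" in this sense, and simultaneously $g$; this is the standard device for turning an $R$-invariant map into something rigid enough to live in a shuffle algebra.

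Once on $D'$ we have nonzero elements $\hat f, \hat g$ of a shuffle algebra (over $\C$, or over a polynomial-type coefficient ring) with $\hat f \cdot \hat g = 0$ in the shuffle product. But shuffle algebras over a field of characteristic zero are integral domains — this is the classical fact (provable by looking at the leading word in a suitable monomial order, since the shuffle of the leading words appears with a nonzero coefficient and cannot be cancelled). That contradiction establishes the lemma, and with it the theorem. I expect the \textbf{main obstacle} to be the bookkeeping in the reduction step: precisely how to extract the disjoint families and set up the order-invariant situation so that the translation into the shuffle algebra is faithful (in particular, that $\hat f$ and $\hat g$ remain nonzero after all the Ramsey thinning, and that $\widehat{fg} = \hat f \cdot \hat g$ so that $fg=0$ genuinely transfers). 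Getting the supports to be "spread out" enough — so that no finite transversal exists on $D'$ either, and so that the shuffle product exactly matches (\ref{eq:product}) with no interference from overlapping sets — is the delicate combinatorial point, and it is where Ramsey's theorem and a careful choice of the countable substructure must be combined. The passage from the group case to the general age case, and from $\C.\age(R)$ to the abstract set algebra, is by contrast routine, as is deducing Cameron's conjecture (the kernel of the relational structure attached to $G$ is finite-orbit-related: $K(R)$ meets every orbit that is finite, so $K(R)=\emptyset$ iff $G$ has no finite orbit).
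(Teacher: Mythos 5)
Your roadmap matches the paper's almost exactly: necessity is the easy direction already in the text; sufficiency comes from a transversality lemma asserting that a zero product $fg=0$ of nonzero elements of the set algebra forces $\mathrm{supp}(f)\cup\mathrm{supp}(g)$ to have a finite transversal; and that lemma is proved by Ramsey-thinning to an order-invariant configuration on which a word-encoding turns the product into (essentially) a shuffle product, whose integrity is seen by a leading-term argument. Your closing deduction (a nonzero $R$-invariant $f$ of positive degree with $K(R)=\emptyset$ has support with no finite transversal) is also exactly the paper's.

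What you have \emph{not} done is prove the lemma — you explicitly flag the two hard points as ``the main obstacle'' and stop. Those are not mere bookkeeping. The paper proceeds by induction on $m$, assuming the bound $\tau(m-1,n)$ exists; a ``discharging'' step shows that if the transversality is large, then after fixing a single $F\in\mathrm{supp}(g)$ one can find a large family of pairwise disjoint members of $\mathrm{supp}(f)$ contained in $E\setminus F$. This asymmetric setup — a fixed $F$ together with a block $V\times C$ built from disjoint $f$-supports — is what guarantees that $f$ and $g$ both remain visibly nonzero after the Ramsey thinning, the very worry you raise. Your symmetric plan (two infinite disjoint families, one in each support, living on a common countable set) is not obviously salvageable without this reorganization. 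Finally, the faithfulness of the word-encoding requires a precise lemma computing the leading (radix-maximal) word of $fg$ as a maximal shuffle of leading words, and over a general characteristic-zero field one also needs a small extra device (a four-block partition of $\K^\ast$, reducing via compactness to the case $\K=\C$) to ensure a sum of products of like-colored coefficients is nonzero; neither appears in your sketch. So: right approach, but the lemma as you state it is asserted rather than proved.
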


The application to the conjecture of Cameron is immediate. Let $G$ be a permutation group acting on $E$ and let $R$ be a relational structure  encoding  $G$. Then, the kernel of $R$ is the union of  the finite $G$-orbits of the one-element sets. Thus,  if  $G$  has no finite orbit, the kernel of $R$  is empty. Hence from Theorem \ref {agealgebranozero},  $A[G]$ is an integral domain,  as conjectured by Cameron.  

We deduce Theorem \ref {agealgebranozero} from a combinatorial  property of a set algebra over a field (Theorem \ref{main} below). This property does not depends upon the field, provided that its characteristic is zero. The proof we give in Section \ref{sec:proof} is an extension of our $1970$ proof that the profile of an infinite relational structure does not decrease (see Theorem \ref{increaseinfinite} below). The key tool we used then was Ramsey's theorem presented in terms of a property of \emph{almost-chainable relations}. Here, these relations are replaced  by \emph{$F-L$-invariant relational structures}, structures which appeared, under other names,  in  several of our papers (see \cite {pouzettr}, \cite{pouzet79}, \cite{pouzetri}). The final step is reminiscent of the proof of the integrity of a shuffle algebra.

We introduced the notion of kernel in\cite{pouzettr} and studied it in several papers \cite{pouzet79} \cite{pouzetrpe}, \cite{pouzetri} and \cite{pouzetsobranisa}. As it is easy to see (cf \cite{pouzet79}\cite{pouzetsobranisa}), the kernel of a relational structure $R$ is empty if and only if
for every finite subset $F$ of $E$ there is a disjoint subset $F'$ such that the
restrictions $R_{\vert F}$ and $R_{\vert F'}$ are isomorphic. Hence, 
relational structures with empty kernel are those for which their age has the \emph{disjoint embedding property}, meaning that two arbitrary members of the age can be embedded into a third in such a way that their domain are disjoint. In Fra{\"\i}ss{\'e}'s terminology, ages with the disjoint embedding property are said \emph{inexhaustible} and relational structures whose age  is inexhaustible are said \emph{age-inexhaustible}; we say that relational structures with finite kernel are \emph{almost age-inexhaustible}. \footnote{In order to agree with Fra{\"\i}ss{\'e}'s terminology, we disagree with the terminology of our papers, in which inexhaustibility, resp. almost inexhaustibility, is used for relational structures with empty, resp. finite, kernel, rather than for their ages.}

\subsection{A transversality property of the  set  algebra}
Let $\K$ be  a field with characteristic zero. Let $E$ be a set and let $[E]^{<\omega}$ be the set of finite subsets of $E$
(including the empty set $\emptyset$). Let $\K^{[E]^{<\omega}}$ be the set of maps $f:[E]^{<\omega}\rightarrow \K$. Endowed with the usual
addition and scalar multiplication of maps, this set is a vector space over $\K$. Let
$f,g\in
\K^{[E]^{<\omega}}$ and  $Q\in [E]^{<\omega}$.  
Set:  
\begin{equation} \label{eq:setproduct}
fg(Q) = \sum_{P \in [Q]^{<\omega}} f(P)g(Q\setminus P)
\end{equation}

With this operation added, the above set becomes a  $\K$-algebra. This algebra is commutative and it has a
unit, denoted by $1$. This is the map taking the value $1$ on the empty set and the value $0$
everywhere else. The \emph{set algebra} is the subalgebra 
 made of maps $f$ such that $f(P)=0$ for every $P\in [E]^{<\omega}$ with $\vert P\vert $ large enough. This algebra is graded, the homogeneous component of degree $n$ being made of maps which take the value $0$ on every subset of size different from $n$ (see Cameron \cite {cameron1}). If $f$ and  $g$ belong to two homogeneous components, their product is given by  Equation (\ref{eq:product}),   thus an age algebra, or a group algebra, $A$, as previously defined,  is a  subalgebra of this set algebra. The set algebra is far from to be an integral domain. But, with the notion of degree, the integrity of  $ A$ will reduce to the fact that if $m$ and  $n$ are  two non negative integers and  $f:[E]^m\rightarrow \K$, $f:[E]^n\rightarrow \K$ are two non-zero maps belonging to $ A$, their product $fg$ is non zero. 

Let ${\mathcal H}$ be a family of subsets of $E$, a subset $T$ of $E$  is a  {\it
transversal} of $\mathcal H$ if $F\cap T\not = \emptyset$ for every $F\in \mathcal H$; the \emph{transversality} of  ${\mathcal H}$, denoted $\tau ({\mathcal H})$, is the minimum of the cardinalities  (possibly
infinite) of  transversals of $\mathcal H$. We make the convention that $\tau ({\mathcal H})=0$ if $\mathcal H$ is empty. 

Let  $f:[E]^m\rightarrow \K$, denote $supp(f):=\{P\in[V]^m: f(P)\not =0\}$. 

Here is our combinatorial result:
\begin{theorem}\label{main} Let $m,n$ be two non negative integers. There is an integer $t$ such that for every set $E$ with at least $m+n$ elements, every field $\K$    with characteristic zero, every pair of maps 
$f:[E]^m\rightarrow
\K$,
$g:[E]^n\rightarrow
\K$  such that $fg$ is zero, but $f$ and $g$ are not, then $\tau(supp(f)\cup supp(g))\leq t$. \end{theorem}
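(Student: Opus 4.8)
The plan is to prove the contrapositive in a uniform quantitative form: I will show that if $fg=0$ with $f,g$ non-zero, then the transversality $\tau(supp(f)\cup supp(g))$ is bounded by a function of $m$ and $n$ alone. The overall strategy follows the 1970 argument for monotonicity of the profile, with almost-chainability replaced by the notion of $F$-$L$-invariance alluded to in the introduction. First I would reduce to the case where $E$ is large: if $\tau(supp(f)\cup supp(g))$ were large, then in particular the family $supp(f)$ has large transversality, so by a standard sunflower/Ramsey-type extraction I can find inside $E$ a large subset $L$ that is ``homogeneous'' for both $f$ and $g$ in the sense that the isomorphism type (value) assigned by $f$ (resp.\ $g$) to an $m$-subset (resp.\ $n$-subset) of $L$ depends only on its intersection pattern with a fixed finite ``core'' $F$. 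Concretely, I would apply Ramsey's theorem finitely many times (colouring $m$-subsets of $E\setminus F$ by the value of $f$, then $n$-subsets by the value of $g$, iterating to stabilise) to produce such an $F$ and an infinite (or merely very large) $L$ disjoint from $F$ on which $f$ and $g$ become $F$-$L$-invariant.

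Once we are in the $F$-$L$-invariant situation, the key point is that the restriction of the algebra structure to subsets of $L$ is governed by a much simpler combinatorial gadget. A finite subset $P\subseteq L$ is recorded only by the data of which of the finitely many ``$L$-types'' it realises and with what multiplicity; since $f$ and $g$ take finitely many values on such subsets, we may package $f$ and $g$ restricted to $[L]^{<\omega}$ as elements of a polynomial-like algebra in finitely many commuting indeterminates — equivalently, coefficients living in a \emph{shuffle algebra} over $\K$. The product formula (\ref{eq:setproduct}), when both arguments range over subsets of $L$, becomes exactly the shuffle product (a sum over ways of interleaving, i.e.\ choosing how the $P$-part meets $Q$), up to the bookkeeping coming from the fixed core $F$. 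The hypothesis $fg=0$ forces the corresponding product in this shuffle algebra to vanish on all sufficiently large subsets of $L$.

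Now I invoke the integrity of the shuffle algebra over a characteristic-zero field: a product of two elements is zero only if one factor is zero. This yields that, after the reduction, either $f$ or $g$ vanishes identically on all of $[L]^{<\omega}$ (with the relevant cardinality), contradicting that $L$ was chosen inside the support. Hence no such large homogeneous $L$ exists, which by the Ramsey extraction means $supp(f)$ (and symmetrically $supp(g)$) cannot have arbitrarily large transversality; tracking the bounds through the finitely many applications of Ramsey's theorem gives the explicit integer $t=t(m,n)$. I expect the main obstacle to be the middle step: setting up the correspondence between the restricted set-algebra product and the shuffle product precisely enough that the contribution of the fixed finite core $F$ is controlled and does not destroy the non-vanishing of the two factors — in other words, checking that $F$-$L$-invariance genuinely linearises the product and that the ``multiplicities of $L$-types'' really are the shuffle-algebra variables, so that characteristic zero (needed to divide by multinomial coefficients) is exactly what makes the argument go through.
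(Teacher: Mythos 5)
Your high-level plan — extract, via Ramsey, a core $F$ and a homogeneous set on which the product behaves like a shuffle product, then exploit a leading-term argument in the spirit of the integrity of the shuffle algebra — is indeed the paper's strategy. But there is a genuine gap in the way you set up the Ramsey colouring. You propose ``colouring $m$-subsets of $E\setminus F$ by the value of $f$, then $n$-subsets by the value of $g$,'' and later assert that ``$f$ and $g$ take finitely many values on such subsets.'' Neither is justified: $\K$ has characteristic zero, hence is infinite, and $f,g$ are arbitrary maps, so the colouring has, in general, infinitely many colours; Ramsey's theorem does not apply, and there is no a priori reason for $f$ or $g$ to have finite range on any subset you might hope to extract (you cannot bootstrap this from the conclusion you are trying to prove). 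The paper resolves this with Lemma~\ref{lem:field}: $\K^{*}$ is partitioned into at most four blocks $D$ such that any finite sum $\sum_i \alpha_i\beta_i$ with all $\alpha_i$ in one block and all $\beta_i$ in another is non-zero (this is the generalisation, via compactness, of the three-colour sign trick $-,0,+$ over $\Q$). Ramsey is applied to the finite palette of $5$ colours $\chi\circ f$, $\chi\circ g$ rather than to the field values themselves, and the ``invariance'' obtained is only block-invariance, not value-invariance. Consequently the leading-term computation does not collapse to multiplying a non-zero scalar by a positive integer (which is where you expected characteristic zero to enter, via multinomial coefficients); rather, it produces a sum of products of elements taken from two fixed blocks, and it is Lemma~\ref{lem:field} — not clearing integer denominators — that guarantees this sum is non-zero. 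Your plan to quote the integrity of the shuffle algebra as a black box cannot absorb this twist, because $f$ and $g$ are not actually elements of a shuffle algebra after the extraction: only their colours are controlled. The paper therefore reproves the leading-term estimate directly (Lemma~\ref{lem:good1}) in the weaker setting it actually has.

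Two smaller points. First, the paper does not extract a homogeneous subset of $E$ in one stroke: it fixes $F\in supp(g)$, finds many pairwise disjoint $m$-sets in $supp(f)\cap[E\setminus F]^m$, arranges them as $V\times C$ with $|V|=m$, and applies Ramsey to the chain $C$ using the alphabet $\mathfrak{P}(V)\setminus\{\emptyset\}$; the existence of enough disjoint sets is obtained by an induction on $m$ (Lemma~\ref{discharging}), which is what makes the final bound $\tau(m,n)\le n+m(\nu(n+m)-1)+\tau(m-1,n)$ recursive. You would at least need the elementary inequality $\tau(supp(f)\cup supp(g))\le\tau(supp(f))+\tau(supp(g))$ and a matching/transversality bound to produce the disjoint family without induction; as written, your ``standard sunflower/Ramsey-type extraction'' is too vague to be checked. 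Second, even once the extraction is made, the radix ordering on words and Lemma~\ref{increasing} are exactly what makes the leading-type analysis go through, and you flagged this as the main obstacle — correctly; that is where most of the technical work lies.
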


With this result, the proof of Theorem \ref{agealgebranozero} is immediate. Indeed,  let $R$ be a relational structure with empty kernel.
 If $\K.\age(R)$, the age algebra of $R$ over $\K$,  is not an integral domain there are two non-zero maps   $f:[E]^m\rightarrow \K$, $f:[E]^n\rightarrow \K$  belonging to $\K.\age(R)$, whose product  $fg$ is  zero. Since $\K$ is an integral domain, none of the integers $m$ and $n$ can be zero.  Since $f$ is $R$-invariant, $m$ is positive  and the kernel $K(R)$  of $R$ is empty, it turns out that $\tau(supp(f))$ is infinite. Hence 
 $\tau(supp(f)\cup supp(g))$ is infinite, contradicting the conclusion of Theorem \ref{main}. 
 
 An other immediate consequence of Theorem \ref{main} is the fact, due to Cameron, that on an infinite set $E$, $e$ is not   a zero-divisor (see Theorem \ref{cameron} below).
\subsubsection{Existence and values of $\tau$} 

The fact the size of  a transversal
can be bounded independently of $f$ and
$g$, and the value of the least upper bound,   seem  to be of independent
interest. 

So, let $\tau(m, n)$ be the least $t$ for which the conclusion of Theorem \ref{main} holds.  

Trivially, we have $\tau(m,n)=\tau(n,m)$. 
We have $\tau(0,n)=\tau(m,0)=0$. Indeed, if $m=0$, $f$ is  defined on the empty set only, an thus  $fg(Q)=f(\emptyset)g(Q)$. Since $\K$ has no non zero divisors, $fg$ is  non zero provided that $f$ and $g$ are non zero. The fact that 
 there is no pair $f,g$ such that $fg$ is zero, but $f$ and $g$ are not, yields $\tau(supp(f)\cup supp(g))=0$. 

We have $\tau(1,n)=2n$ (Theorem \ref {tau(1,n)}). This is a non-trivial fact which essentially amounts to a weighted version of the Gottlieb-Kantor Theorem on incidence matrices (\cite{gottlieb}, \cite{kantor},  see subsection \ref{sub:agealgebra} and Theorem \ref{kantorweight}).
These are the only exact values we know. We prove that $\tau(m, n) $ exists, by supposing that $\tau(m-1,n)$ exists. 
Our existence proof relies in an essential way on Ramsey theorem. It  yields  astronomical upper bounds. For example,  it yields $\tau (2,2)\leq 2(R^2_{k}(4)+2)$ , where $k=5^{30}$ and $R^2_k(4)$ is the Ramsey  number equal to the least integer $p$ such that for every colouring  of the  pairs of $\{1, \dots, p\}$ into $k$ colors there are four integers whose all pairs have the same colour.  The only lower bound we have is  $\tau(2,2)\geq 7$ and more generally $\tau(m,n)\geq (m+1)(n+1)-2$.
 We cannot preclude a
extremely simple upper bound for $\tau(m,n)$, eg quadratic in
$n+m$.

\subsection{Age algebra and profile of a relational structure}\label{sub:agealgebra}
The group agebra was invented  by Cameron in order  to study  the behavior of the function $\theta_G$  which counts for each integer $n$ the number $\theta_G(n)$ of orbits of $n$-subsets of a set $E$ on which acts a permutation group $G$, a function that we call the \emph{orbital profile} of $G$. Groups for which  the orbital profile   takes only finite values are quite important.  Called \emph{oligomorphic groups} by Cameron, they are an objet of study by itself (see Cameron's book\cite{cameronbook}). We present first some properties of the \emph{profile}, a  counting function somewhat more general.  Next, we present the link with the age algebra, then  we gives an  illustration of Theorem \ref{agealgebranozero}. We conclude with some problems. 
\subsubsection{Profile of a relational structure}
The \emph {profile} of a relational structure $R$ with base $E$  is the function
$\profile_R$ which counts for every integer $n$ the number (possibly infinite)
$\profile_R(n)$ of substructures of $R$ induced on the $n$-element
subsets, isomorphic substructures being identified.   Clearly, if $R$ encodes a permutation groups $G$, $\profile_R(n)$ is the number $\theta_G(n)$ of orbits of $n$-element subsets of $E$.   

If the signature $\mu$ is finite (in the sense that $I$ is finite),  there are only finitely many relational structures with signature $\mu$ on an $n$-element domain, hence $\profile_R(n)$ is necessarily an integer for each integer $n$. In order to capture examples coming from algebra and group theory, one cannot preclude $I$ to be infinite. But then, $\varphi_R(n)$ could be an infinite cardinal.  
As far as one is  concerned by the behavior of $\varphi_R$, this case can be  excluded:
\begin{fact}\label{infiniteprofile2}\cite{pouzet06}
Let $n<\vert E\vert $. Then
\begin{equation}\label{infiniteprofile}
 \varphi_R(n)\leq (n+1)\varphi _R(n+1)
\end{equation}
In particular:
\begin{equation}\text{If} \;  \varphi_R(n) \; \text{ is infinite then}\;  \varphi_R(n+1) \; \text{ is infinite  too and }\; \varphi_R(n)\leq \varphi _R(n+1).
\end{equation}
\end{fact}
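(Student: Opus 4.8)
The plan is to run a simple "shadow of types" double-counting argument relating the isomorphism types of the $n$-element restrictions of $R$ to those of the $(n+1)$-element restrictions. Write $\mathcal{A}_{k}$ for the set of isomorphism types of restrictions $R_{\restriction P}$ with $P\in[E]^{k}$, so that $\varphi_R(k)=\vert\mathcal{A}_{k}\vert$. For each type $\sigma\in\mathcal{A}_{n}$ I would fix once and for all a witness $P_{\sigma}\in[E]^{n}$ with $R_{\restriction P_{\sigma}}$ of type $\sigma$; since $n<\vert E\vert$ there is some $x_{\sigma}\in E\setminus P_{\sigma}$, and I let $\Phi(\sigma)\in\mathcal{A}_{n+1}$ be the isomorphism type of $R_{\restriction P_{\sigma}\cup\{x_{\sigma}\}}$. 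This defines a map $\Phi\colon\mathcal{A}_{n}\to\mathcal{A}_{n+1}$; it is not canonical, but only the sizes of its fibres will matter.

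The key claim will be that every fibre $\Phi^{-1}(\rho)$ has at most $n+1$ elements. Fix $\rho\in\mathcal{A}_{n+1}$ and a representative structure $S$ of type $\rho$ on an $(n+1)$-element domain $Q$. If $\sigma\in\Phi^{-1}(\rho)$, then $R_{\restriction P_{\sigma}\cup\{x_{\sigma}\}}$ is isomorphic to $S$, so under such an isomorphism $P_{\sigma}$ is carried onto some $A\in[Q]^{n}$ with $S_{\restriction A}$ of type $\sigma$; hence $\sigma$ lies in the set $\{\text{type of }S_{\restriction A}:A\in[Q]^{n}\}$, which has at most $n+1$ elements since there are exactly $\binom{n+1}{n}=n+1$ subsets of $Q$ of size $n$. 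As the types composing a fibre are pairwise distinct, the claim follows, and summing over $\rho\in\mathcal{A}_{n+1}$ gives
\[
\varphi_R(n)=\vert\mathcal{A}_{n}\vert=\sum_{\rho\in\mathcal{A}_{n+1}}\vert\Phi^{-1}(\rho)\vert\le(n+1)\,\vert\mathcal{A}_{n+1}\vert=(n+1)\,\varphi_R(n+1),
\]
which is inequality (\ref{infiniteprofile}). For the "in particular" clause, suppose $\varphi_R(n)$ is infinite: since $n+1$ is a finite cardinal, $(n+1)\,\varphi_R(n+1)$ can be infinite only if $\varphi_R(n+1)$ is, so $\varphi_R(n+1)$ is infinite; and then $(n+1)\,\varphi_R(n+1)=\varphi_R(n+1)$ by cardinal arithmetic, so (\ref{infiniteprofile}) reads $\varphi_R(n)\le\varphi_R(n+1)$.

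I do not expect a real obstacle here: the argument is elementary and the only point deserving attention is the hypothesis $n<\vert E\vert$, which is used exactly to produce the extra point $x_{\sigma}$ and without which the statement fails (take $\vert E\vert=n$, so that $\varphi_R(n)=1$ while $\varphi_R(n+1)=0$). One should also remark explicitly that $\Phi$ depends on the arbitrary choices of the witnesses $P_{\sigma}$ and the points $x_{\sigma}$; since the proof only invokes the cardinalities of its fibres, these choices are immaterial.
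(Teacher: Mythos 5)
Your proof is correct, and since the paper cites Fact~\ref{infiniteprofile2} to an external reference rather than proving it inline, there is nothing to compare it against directly; but your double-counting argument (fix a witness $P_\sigma$ and a point $x_\sigma\notin P_\sigma$, observe that the induced map $\Phi\colon\mathcal A_n\to\mathcal A_{n+1}$ has fibres of size at most $\binom{n+1}{n}=n+1$, and sum) is the standard and essentially only natural proof, and the cardinal-arithmetic deduction of the ``in particular'' clause is handled properly. Your remark on the necessity of $n<\vert E\vert$ is also correct and well placed.
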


Inequality (\ref{infiniteprofile})  can be substantially improved:

\begin{theorem}\label{increaseinfinite}
  If $R$ is a relational structure on an infinite set then
  $\profile_R$ is non-decreasing.
\end{theorem}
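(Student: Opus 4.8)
The plan is to reduce Theorem~\ref{increaseinfinite} to a statement about the age algebra, and then to the fact that the constant function $e$ is not a zero--divisor.

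First, if $\profile_R(n)$ is infinite there is nothing to prove: by Fact~\ref{infiniteprofile2}, $\profile_R(n+1)$ is then infinite too and $\profile_R(n)\le\profile_R(n+1)$. So I may assume $\profile_R(n)$ finite, and likewise $\profile_R(n+1)$ finite, since otherwise $\profile_R(n)\le\profile_R(n+1)$ holds trivially. Under this assumption the homogeneous component $(\C.\age(R))_k$ of the age algebra is spanned by the characteristic functions of the $\profile_R(k)$ isomorphism types of size $k$ occurring in $R$; these functions have pairwise disjoint supports, hence are linearly independent, so $\dim_\C(\C.\age(R))_k=\profile_R(k)$ for $k=n,n+1$. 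It therefore suffices to exhibit an injective $\C$--linear map from $(\C.\age(R))_n$ into $(\C.\age(R))_{n+1}$.

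The natural candidate is multiplication by the constant map $e\in(\C.\age(R))_1$. Indeed, if $f$ is $R$--invariant on $[E]^n$ then by (\ref{eq:product}), for $Q\in[E]^{n+1}$,
\[
(ef)(Q)=\sum_{P\in[Q]^n}f(P),
\]
so $ef$ is $R$--invariant and $f\mapsto ef$ is $\C$--linear from $(\C.\age(R))_n$ to $(\C.\age(R))_{n+1}$. Its injectivity is exactly the classical fact --- due in this context to Cameron (Theorem~\ref{cameron}), and amounting to a weighted form of the Gottlieb--Kantor theorem on inclusion matrices (Theorem~\ref{kantorweight}) --- that on an infinite set $e$ is not a zero--divisor; it is also the instance $m=1$ of Theorem~\ref{main}, since $\tau(1,n)$ is finite (Theorem~\ref{tau(1,n)}) whereas $supp(e)=[E]^1$ admits no finite transversal when $E$ is infinite. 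Granting any one of these inputs, the proof is complete.

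The whole weight of the argument thus lies in this injectivity, and the proof I would actually present for it --- self--contained, in the spirit of the $1970$ argument, and not appealing to the later results of the paper --- is Ramsey--theoretic. Assume $f\neq 0$ on $[E]^n$ with $ef=0$, i.e.\ $\sum_{P\in[Q]^n}f(P)=0$ for every $(n+1)$--subset $Q$ of $E$. Using Ramsey's theorem one extracts from $E$ a large substructure that is \emph{almost--chainable} --- in the later language of the paper, $F$--$L$--invariant --- so that over a fixed finite part $F$ the finitely many $n$-- and $(n+1)$--types occurring are read off from a single linear order $L$; the vanishing relations then become a rigid linear system along this chain and force $f$ to vanish step by step, a contradiction. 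The hard part is precisely this extraction and the bookkeeping it demands: one must control, uniformly in the (possibly infinite) signature, how an $(n+1)$--type restricts to its $n$--types along the chainable part. This is exactly the difficulty that Theorem~\ref{main} isolates and overcomes in full generality, the present statement being its simplest instance.
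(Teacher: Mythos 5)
Your reduction to the injectivity of multiplication by $e$ in the age algebra is correct and clean: with $\profile_R(n),\profile_R(n+1)$ both finite, $\dim_\C(\C.\age(R))_k=\profile_R(k)$ for $k\in\{n,n+1\}$, multiplication by $e$ is a linear map between these components, and its injectivity (Theorem~\ref{cameron}) gives the inequality, while the infinite cases are disposed of by Fact~\ref{infiniteprofile2}. This is, however, Cameron's route, which the paper records explicitly in the remark immediately following Theorem~\ref{cameron}; it is \emph{not} the paper's own ($1971$) proof, which the paper describes as a direct Ramsey-theoretic argument via almost-chainable relations, predating the age algebra by a decade and never passing through $e$ at all. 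What the age-algebra route buys is a conceptually transparent reduction; what it defers is the whole difficulty, now concentrated in showing $e$ is not a zero-divisor — a fact the paper itself obtains either from the weighted Gottlieb--Kantor theorem (Theorem~\ref{kantorweight}) or as the $m=1$ case of Theorem~\ref{main}. Your concluding ``self-contained'' Ramsey sketch names the right ingredients (extract an $F$-$L$-invariant subconfiguration, read off types along a chain, force a linear system to vanish) but does not actually carry out the extraction, the bookkeeping on the fixed finite part $F$, or the final step-by-step vanishing, so it is an outline rather than a proof. Thus: as an argument citing Theorem~\ref{cameron} (or Theorem~\ref{kantorweight}, or the $m=1$ instance of Theorem~\ref{main}) your proposal is complete and correct but takes a different route than the paper's own; as a genuinely self-contained argument it has a gap at exactly the point you yourself flag.
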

This result was conjectured with R.Fra{\"\i}ss{\'e} \cite{fraissepouzet1}. We proved it in $1971$; the proof - for a single relation- appeared in 1971 in R.Fra{\"\i}ss{\'e}'s book ~\cite{fraisseclmt1}, Exercise~8
p.~113;  the general case was detailed in  \cite{pouzetrpe}. The proof relies on Ramsey theorem \cite{ramsey}.

More is true:
 \begin{theorem} \label{linalg}If $R$ is a relational structure on a set $E$ having  at least  $2n+m$ elements 
then
$\varphi_{R}(n) \leq \varphi_{R} (n+m)$.\end{theorem}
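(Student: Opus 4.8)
The plan is to prove the inequality by a linear algebra argument over $\Q$, reducing it to the full‑rank property of set–inclusion matrices (the Gottlieb--Kantor theorem). First, if $E$ is infinite the conclusion is already contained in Theorem \ref{increaseinfinite} (the profile of an infinite relational structure is non‑decreasing), so I may assume $E$ is finite with $\vert E\vert \ge 2n+m$; in particular both $\varphi_R(n)$ and $\varphi_R(n+m)$ are finite, being bounded by $\binom{\vert E\vert}{n}$ and $\binom{\vert E\vert}{n+m}$ respectively. (If one prefers not to invoke Theorem \ref{increaseinfinite}, the same argument below applies after restricting $R$ to a finite $E'\subseteq E$ with $\vert E'\vert\ge 2n+m$ containing a set of representatives of the $n$‑element types, in the case where $\varphi_R(n)$ is finite; the remaining case is trivial.)

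For $k\ge 0$ let $V_k$ denote the $\Q$‑vector space of $R$‑invariant maps $[E]^k\to\Q$. An $R$‑invariant map is constant on each isomorphism type, and the characteristic functions of the (finitely many) types of $k$‑element substructures form a basis of $V_k$, so $\dim_{\Q}V_k=\varphi_R(k)$. Hence it suffices to exhibit an injective $\Q$‑linear map $V_n\to V_{n+m}$.

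The map I would use is multiplication by $e^m$, where $e\in V_1$ is the constant map taking value $1$ on every singleton. Since products of $R$‑invariant maps are $R$‑invariant and degrees add, $f\mapsto e^m f$ sends $V_n$ into $V_{n+m}$, and a short induction on $m$ using Equation (\ref{eq:product}) gives $(e^m f)(Q)=m!\sum_{P\in[Q]^n}f(P)$ for every $Q\in[E]^{n+m}$. As $\Q$ has characteristic zero, $e^m f=0$ is equivalent to $\sum_{P\in[Q]^n}f(P)=0$ for all $Q\in[E]^{n+m}$, i.e. to $f$ lying in the kernel of the transpose of the inclusion matrix $W_{n,n+m}$ whose rows are indexed by the $n$‑subsets of $E$, whose columns are indexed by the $(n+m)$‑subsets, and whose $(P,Q)$ entry is $1$ exactly when $P\subseteq Q$.

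The crux is then the classical fact that $W_{n,n+m}$ has rank $\binom{\vert E\vert}{n}$, that is, full row rank, as soon as $\vert E\vert\ge n+(n+m)=2n+m$ — which is precisely the hypothesis of the theorem; this is the Gottlieb--Kantor theorem on incidence matrices (cf.\ \cite{gottlieb}, \cite{kantor}, and the weighted refinement in Theorem \ref{kantorweight}). Consequently the transpose of $W_{n,n+m}$ has trivial kernel, so $f\mapsto e^m f$ is injective on $\Q^{[E]^n}$, a fortiori on $V_n$, whence $\varphi_R(n)=\dim V_n\le\dim V_{n+m}=\varphi_R(n+m)$. The only genuine obstacle is this rank computation: the bound $2n+m$ is sharp and the statement truly relies on the characteristic being zero (over a field of small positive characteristic the rank can drop), while everything else is bookkeeping about degrees, invariance and dimensions.
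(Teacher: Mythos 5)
Your proof is correct and is essentially the argument the paper intends: reduce to the finite case, note that $R$-invariant maps form a space of dimension $\varphi_R(k)$, and show that multiplication by $e^m$ (equivalently, up to the factor $m!$, the transpose of the inclusion matrix $M_{n,n+m}$) is injective when $|E|\ge 2n+m$, which is exactly the Gottlieb--Kantor theorem (Theorem \ref{kantor}) that the paper invokes and, via Theorem \ref{cameron}, the age-algebra translation it attributes to Cameron. The only difference is one of exposition, not of method.
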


Meaning that if $\vert E\vert :=\ell$ then  $\varphi_{R}$ increases up to  $\frac{\ell}{2}$; and,
for
$n
\geq
\frac{\ell}{2}$ the value in  $n$ is at least the value of the symmetric of  $n$ w.r.t.
$\frac{\ell}{2}$.

The result is a straightforward consequence of   the following property of incidence matrices.

Let $m,n, \ell$ be three non-negative integers and $E$ be an $\ell$-element set. Let $M_{n, n+m}$ be the matrix whose  rows are indexed by the $n$-element subsets $P$ of $E$ and  columns by the $n+m$-element subsets $Q$ of $E$, the coefficient $a_{P,Q}$ being equal to $1$ if $P\subseteq Q$ and equal to $0$ otherwise. 

\begin{theorem} \label {kantor} If $2n+m\leq l$ then $M_{n, n+m}$ has full row rank (over the field of rational numbers).\end{theorem}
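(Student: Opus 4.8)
This is the Gottlieb--Kantor theorem, and the route I would take is linear-algebraic. Write $M:=M_{n,n+m}$, with rows indexed by $[E]^{n}$ and columns by $[E]^{n+m}$. Since the rank of a rational matrix is unchanged by passing to $\R$, and $\mathrm{rank}(M)=\mathrm{rank}(MM^{T})$ over $\R$, it suffices to prove that the symmetric square matrix $N:=MM^{T}$, indexed by $[E]^{n}$, is non-singular (this also forces $\binom{\ell}{n}\le\binom{\ell}{n+m}$ automatically, so ``full row rank'' is the right statement). A direct count shows that the $(P,P')$-entry of $N$ is the number of $(n+m)$-subsets of $E$ containing $P\cup P'$, and hence depends only on $i:=|P\cap P'|$; in particular $N$ commutes with the natural action of $\mathrm{Sym}(E)$ on $\R^{[E]^{n}}$, i.e.\ it lies in the Bose--Mesner algebra of the Johnson scheme $J(\ell,n)$.

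To diagonalize $N$ I would invoke the classical decomposition of $\R^{[E]^{n}}$ as an orthogonal sum $V_{0}\oplus\cdots\oplus V_{n}$ of pairwise non-isomorphic irreducible $\mathrm{Sym}(E)$-modules (all $n+1$ of them occur because $2n\le\ell$); by Schur's lemma every $\mathrm{Sym}(E)$-equivariant endomorphism — and, by equivariance, also the transpose of every equivariant map — respects this decomposition and acts on each $V_{j}$ as a scalar. For the one-step matrix $N_{k}:=M_{k,k+1}M_{k,k+1}^{T}$ on $\R^{[E]^{k}}$ one has $N_{k}=(\ell-k)I+A_{k}$ with $A_{k}$ the adjacency matrix of the Johnson graph $J(\ell,k)$, whose spectrum is classical; this gives eigenvalue $a_{k,j}=(k+1-j)(\ell-k-j)$ on $V_{j}$. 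Now use the elementary identity $M_{n+m-1,n+m}^{T}\cdots M_{n,n+1}^{T}=m!\,M_{n,n+m}^{T}$ (both sides send $y\mapsto\bigl(Q\mapsto\sum_{P\subseteq Q}y(P)\bigr)$ up to the factor $m!$ counting maximal chains between two sets differing by $m$ elements); squaring this and telescoping from the inside, using $M_{k,k+1}M_{k,k+1}^{T}|_{V_{j}}=a_{k,j}$ at each stage, yields that $N$ acts on $V_{j}$ by
\[
\Lambda_{j}=\frac{1}{(m!)^{2}}\prod_{k=n}^{n+m-1}(k+1-j)(\ell-k-j).
\]

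It remains to check $\Lambda_{j}\neq 0$ for all $0\le j\le n$. A factor can vanish only through a term $\ell-k-j$ with $n\le k\le n+m-1$ (the terms $k+1-j$ are $\ge n+1-j\ge 1$), i.e.\ only for $j\in\{\ell-n-m+1,\dots,\ell-n\}$; but the hypothesis $2n+m\le\ell$ says exactly $n<\ell-n-m+1$, so no such $j$ lies in $\{0,\dots,n\}$. Hence all $\Lambda_{j}\ne 0$, $N$ is non-singular, and $M_{n,n+m}$ has full row rank. The one genuinely substantial ingredient is the explicit diagonalization — the $\mathrm{Sym}(E)$-irreducible decomposition of $\R^{[E]^{n}}$ together with the Johnson-graph spectrum; given those, the two reductions, the telescoping identity, and the final arithmetic check are routine. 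One could instead compute $\Lambda_{j}$ straight from the Eberlein polynomials of $J(\ell,n)$, but the chain-of-single-steps computation seems the cleanest.
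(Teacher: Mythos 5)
The paper itself does not supply a proof of this theorem: it only cites Kantor (1972), the earlier Gottlieb (1966), and "a clever argument leading to further developments" given by Fra\"{\i}ss\'e in his \emph{Theory of Relations}. Fra\"{\i}ss\'e's argument is an elementary, essentially combinatorial induction; your route through the Johnson scheme and $\mathrm{Sym}(E)$-representation theory is a different, more algebraic classical path to the same fact.

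Your proof is correct. The reduction to non-singularity of $N=MM^{T}$ is sound; $N$ does lie in the Bose--Mesner algebra of $J(\ell,n)$; $N_{k}=(\ell-k)I+A_{k}$ is right, and so is the eigenvalue $(\ell-k)+\bigl((k-j)(\ell-k-j)-j\bigr)=(k+1-j)(\ell-k-j)$ on $V_{j}$; the chain identity $M_{n+m-1,n+m}^{T}\cdots M_{n,n+1}^{T}=m!\,M_{n,n+m}^{T}$ is right (the $m!$ counts maximal chains); and because each up/down map is equivariant and the decomposition is multiplicity-free, the telescoping from the inside is legitimate. One small point worth making explicit: when you telescope you implicitly need $V_{j}^{(k)}\neq 0$ for all $n\le k\le n+m$ and $0\le j\le n$, i.e.\ $j\le\min(k,\ell-k)$ at every intermediate level. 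This holds precisely because $2n+m\le\ell$ gives $\ell-k\ge\ell-(n+m)\ge n\ge j$, and your final arithmetic check $\Lambda_{j}\neq0$ is simultaneously confirming that none of the intermediate isotypic components vanish. Stating that explicitly would tighten the exposition, but the argument as written is valid. Compared to Fra\"{\i}ss\'e's or Kantor's inductive proofs, your approach requires as input the irreducible decomposition of $\R^{[E]^{n}}$ and the Johnson-graph spectrum, but in exchange it produces the exact eigenvalues $\Lambda_{j}$ of $MM^{T}$, which is more information than bare full-rank and is what one needs, for instance, to estimate condition numbers or to prove sharper statements like the weighted version in Theorem~\ref{kantorweight}.
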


Theorem \ref {kantor} is in W.Kantor 1972 \cite{kantor}, with similar results for affine and vector
subspaces of a vector space.   Over the last 30 years, it as been applied and rediscovered many
times; recently, it was pointed out that it appeared in a 1966  paper of D.H.Gottlieb  \cite{gottlieb}.  Nowadays, this is one of the fundamental tools in algebraic combinatorics. A proof, with a clever argument leading to further developments,  was  given by Fra\"{\i}ss\'e in the $1986$'s  edition of  his book, Theory of relations, see \cite{fraissetr}.

We proved  Theorem \ref{linalg} in 1976 \cite{pouzet76}. The same conclusion was obtained first  for orbits of finite permutation groups  by  Livingstone and  Wagner,  
1965 \cite{livingstone}, and extended   to arbitrary permutation groups by Cameron, 1976 \cite{cameron76}. His proof uses  the dual version of Theorem \ref{kantor}. Later on, he discovered a nice translation in terms  of his age algebra, that we  present now.

For that,  observe that  $\varphi_R$ only depends upon the age of  $R$ and, moreover, if $\varphi_R$ take only integer values, then $\K.\age(R)$ identifies with the set of (finite) linear combinations of members of $\age(R)$. In this case, as pointed out  by Cameron, \emph{$\varphi_R(n)$  is  the dimension of the homogeneous component of degree $n$ of $\K.\age(R)$}. 

Let $e\in \K^{[E]^{<\omega}}$ be the map which is $1$ on the one-element subsets of $E$ and $0$ elsewhere. Let $U$ be the subalgebra  generated by $e$. We can think of $e$ as the sum of isomorphic types of the one-element restrictions of $R$.  
Members of $U$ are then of the form $\lambda_me^m+\cdots+\lambda_1e+\lambda_01$ where $1$ is the isomorphic type of the empty relational structure and $\lambda_m,\dots, \lambda_0$ are in $\K$. Hence $U$ is graded, with $U_n$, the homogeneous component of degree $n$, equals to $\K.e^n$.

Here is the Cameron's result:
 \begin{theorem} \label{cameron}If $R$ is infinite then, for every $u\in \K.\age(R)$, $e u=0$  if and only if $u=0$
 \end{theorem}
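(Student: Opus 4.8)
The plan is to derive the statement directly from the transversality result, Theorem \ref{main}, exactly as suggested by the remark following that theorem. The only direction requiring an argument is that $eu=0$ forces $u=0$; the converse is trivial since $e0=0$. So suppose $u\in\K.\age(R)$ with $eu=0$ and $u\neq 0$. Since the multiplication respects the grading, we may write $u=\sum_n u_n$ with $u_n$ the homogeneous component of degree $n$, and since $e$ is homogeneous of degree $1$, the identity $eu=0$ splits into $eu_n=0$ for every $n$; hence it suffices to treat a single non-zero homogeneous $u_n=:g\colon[E]^n\to\K$. Thus we are reduced to showing: if $g\colon[E]^n\to\K$ is non-zero and $E$ is infinite, then $eg\neq 0$.

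Now I would apply Theorem \ref{main} with $f:=e$, which is the characteristic function of the one-element subsets, so $m=1$ and $supp(f)=[E]^1=\{\{x\}:x\in E\}$. A transversal of $supp(f)$ must meet every singleton, hence must be all of $E$; since $E$ is infinite, $\tau(supp(f))$ is infinite, and therefore $\tau(supp(f)\cup supp(g))$ is infinite as well. If $eg$ were zero with both $e$ and $g$ non-zero, Theorem \ref{main} would give a finite bound $t=\tau(1,n)$ on $\tau(supp(e)\cup supp(g))$, a contradiction. Hence $eg\neq 0$, as required. (Alternatively, one may quote the sharper Theorem \ref{tau(1,n)}, $\tau(1,n)=2n$, but only the finiteness of the bound is needed here.)

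There is really no substantive obstacle in this argument: the entire content has been pushed into Theorem \ref{main}. The one point that deserves a word of care is the reduction to a homogeneous component. An element $u$ of $\K.\age(R)$ need not be homogeneous, but because $\K.\age(R)$ is a graded subalgebra of the set algebra and $e$ lies in degree $1$, the product $eu$ is the sum over $n$ of the degree-$(n+1)$ pieces $eu_n$, and these pieces live in distinct homogeneous components, so $eu=0$ if and only if $eu_n=0$ for all $n$. Once this is observed, the non-zero $u$ has some non-zero homogeneous component $u_n$, and the previous paragraph finishes the proof. One should also note that the hypothesis "$R$ infinite" is exactly what makes $\tau([E]^1)$ infinite, which is the crux; on a finite base the statement can fail.
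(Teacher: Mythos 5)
Your proof is correct and follows exactly the route the paper itself indicates: the paper remarks (just after Theorem~\ref{main}) that the non-zero-divisor property of $e$ on an infinite set is an immediate consequence of Theorem~\ref{main}, and you spell out precisely that deduction, including the needed reduction to homogeneous components and the observation that $\tau(\operatorname{supp}(e))=\tau([E]^{1})=\lvert E\rvert$ is infinite. No discrepancy with the paper's argument.
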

 
 This innocent looking result implies that $\varphi_R$ is non decreasing. Indeed,  the image of a basis of $\K.\age(R)_n$ by multiplication by $e^m$ is an independent subset  of  $\K.\age(R)_{n+m}$.   
 
\subsubsection{Growth rate of the profile} Infinite relational structures with a constant profile, equal  to $1$,  were called  {\it monomorphic} and characterized by R. Fra\"{\i}ss\'e who proved that they were {\it  chainable}. Later on, those with bounded profile, called {\it finimorphic}, were characterized as {\it almost chainable} \cite{fraissepouzet1}. Groups with orbital profile equal to $1$ were described by P.Cameron in 1976 \cite{cameron76}. From his characterization, Cameron  obtained that  an orbital profile is ultimately constant, or  grows as fast as a linear function with slope $\frac{1}{2}$.
 
The age algebra can be also used to study the growth of the profile. 

If $A$ is a graded algebra, the \emph{Hilbert function} $h_A$ of $A$ is the function which associates to each integer $n$ the dimension of the  homogeneous component of degree $n$. So, provided that it takes only finite values, the profile $\varphi_R$ is the Hilbert function of the age algebra $\C.\mathcal A(R)$.   In \cite{cameron4},  Cameron made the following important observation about the behavior of the Hilbert fonction.

\begin{theorem}\label {hilbertnonzero} Let  $A$ be a graded algebra over  an algebraically closed field of
characteristic zero.  If
$A$ is an integral domain  the values of the Hilbert function $h_A$ satisfy the inequality 
\begin{equation}\label{eqhilbert}
h_A(n)+h_A(m) -1\leq h_A(n+m)
\end{equation}
for all non-negative integers $n$ and $m$. \end{theorem}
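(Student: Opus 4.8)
The plan is to reduce the Hilbert-function inequality to a statement about the existence of non-zero products in $A$, using the fact that an integral domain has no zero-divisors. Fix non-negative integers $n$ and $m$ and write $a:=h_A(n)$, $b:=h_A(m)$; I want to produce at least $a+b-1$ linearly independent elements of the homogeneous component $A_{n+m}$. Choose a basis $x_1,\dots,x_a$ of $A_n$ and a basis $y_1,\dots,y_b$ of $A_m$. The products $x_iy_j$ all lie in $A_{n+m}$, so it suffices to show that among them one can find $a+b-1$ that are linearly independent over the field. Since $A$ is an integral domain, for each fixed $i$ the element $x_i$ is not a zero-divisor, so the map $z\mapsto x_iz$ from $A_m$ to $A_{n+m}$ is injective; in particular $x_iy_1,\dots,x_iy_b$ are linearly independent, and likewise $x_1y_j,\dots,x_ay_j$ are linearly independent for each fixed $j$.

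The heart of the argument is therefore a pure linear-algebra lemma: if $x_1,\dots,x_a$ and $y_1,\dots,y_b$ are elements of a vector space over an (algebraically closed, characteristic zero) field such that the ``row span'' $\langle x_iy_1,\dots,x_iy_b\rangle$ has dimension $b$ for every $i$ and the ``column span'' $\langle x_1y_j,\dots,x_ay_j\rangle$ has dimension $a$ for every $j$, then the whole family $\{x_iy_j\}$ spans a space of dimension at least $a+b-1$. I would prove this by passing to generic linear combinations: replace $x_1,\dots,x_a$ by a single generic element $x(s):=\sum_i s_i x_i$ and $y_1,\dots,y_b$ by $y(u):=\sum_j u_j y_j$, where $s=(s_i)$ and $u=(u_j)$ are indeterminates, and consider the product $x(s)y(u)\in A_{n+m}\otimes \K(s,u)$. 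Expanding, $x(s)y(u)=\sum_{i,j}s_iu_j\, x_iy_j$. The dimension of the span of $\{x_iy_j\}$ equals the rank of the coefficient tensor, and I claim it is bounded below by the number of monomials $s_iu_j$ that must appear with nonzero coefficient once we diagonalize appropriately. Concretely: by the injectivity observations, $x(s)\neq 0$ as long as $s\neq 0$, and then again because $A$ is a domain, $x(s)y(u)\neq 0$ whenever $s\neq0$ and $u\neq 0$; feeding this into a degree/Newton-polytope count on the bihomogeneous polynomial whose ``coefficients'' are the $x_iy_j$ forces the support of a generic such product, after a generic linear change of the $x$'s and $y$'s, to contain a staircase of size $a+b-1$, hence $a+b-1$ independent vectors.

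The main obstacle is making this genericity/support argument rigorous, i.e. extracting $a+b-1$ linearly independent vectors from the array $(x_iy_j)$ knowing only that all its rows have full length $b$, all its columns have full length $a$, and (crucially) that \emph{no} nontrivial linear combination $\bigl(\sum s_i x_i\bigr)\bigl(\sum u_j y_j\bigr)$ vanishes. A clean way to organize it: choose coordinates so that $x_1y_1,\dots,x_1y_b$ are part of a basis of the span $W:=\langle x_iy_j\rangle$; if $\dim W = b+r$ for some $r\ge 0$, I must show $r\ge a-1$. Suppose not; then the rows $(x_iy_1,\dots,x_iy_b)$, viewed as vectors in the quotient $W/\langle x_1y_\bullet\rangle$ of dimension $r<a-1$, would satisfy enough linear dependencies to build a nonzero $s$ with $\sum_i s_i x_i$ annihilating every $y_j$ modulo $\langle x_1 y_\bullet\rangle$, and then, using that $x_1$ is not a zero-divisor and that the field is algebraically closed (to split the relevant polynomial so a common root of the obstruction polynomials exists), one produces a genuine zero-divisor pair, contradicting integrality. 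The algebraic closedness and characteristic-zero hypotheses enter exactly here, to guarantee that the auxiliary system of polynomial equations forced by a hypothetical deficiency $r<a-1$ actually has a solution over the field. Once the linear-algebra lemma is established the theorem is immediate: $h_A(n+m)=\dim A_{n+m}\ge \dim W\ge a+b-1 = h_A(n)+h_A(m)-1$.
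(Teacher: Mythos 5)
The paper itself does not prove Theorem~\ref{hilbertnonzero}; it cites it as a result of Cameron~\cite{cameron4}, so there is no internal proof to compare with. Judged on its own, your outline has the right skeleton but a genuine gap at the heart. You correctly reduce everything to a bilinear statement: the multiplication map $\mu\colon A_n\times A_m\to A_{n+m}$ satisfies $\mu(x,y)=0\Rightarrow x=0$ or $y=0$ because $A$ is a domain, and what must be shown is that any such $\K$-bilinear map on spaces of dimensions $a$ and $b$ has image spanning a subspace of dimension at least $a+b-1$. That reduction is exactly right and is the standard first step.

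What is missing is the proof of that bilinear lemma. Your paragraph about ``a degree/Newton-polytope count'' forcing ``a staircase of size $a+b-1$,'' and your later paragraph about ``splitting the relevant polynomial so a common root of the obstruction polynomials exists,'' gesture at the right kind of argument but never pin it down; you yourself flag this as ``the main obstacle.'' The clean way to finish is projective: consider the linear map $T\colon\K^a\otimes\K^b\to A_{n+m}$ sending $e_i\otimes f_j\mapsto x_iy_j$, and let $W=\mathrm{im}\,T$. The Segre variety $\Sigma\subset\mathbb{P}(\K^a\otimes\K^b)=\mathbb{P}^{ab-1}$ of decomposable tensors has dimension $a+b-2$, and $\mathbb{P}(\ker T)$ is a linear subspace of dimension $ab-\dim W-1$. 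If $\dim W\le a+b-2$, then $\dim\Sigma+\dim\mathbb{P}(\ker T)\ge ab-1$, so over an algebraically closed field these two projective varieties must meet; a point of intersection is a nonzero decomposable tensor $s\otimes u\in\ker T$, i.e.\ $s\ne 0$, $u\ne0$, and $\bigl(\sum_i s_i x_i\bigr)\bigl(\sum_j u_j y_j\bigr)=0$, contradicting integrality. That intersection step is exactly where algebraic closedness enters (and in fact the characteristic-zero hypothesis is not needed for this argument). Without this piece, your injectivity observations on rows and columns are not enough: a $2\times2$ array $\{x_iy_j\}$ whose rows and columns are each independent can still span only a $2$-dimensional space, so the stronger hypothesis that \emph{every} nontrivial product is nonzero, combined with the intersection theorem, is what produces the bound $a+b-1$.
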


This result has an  immediate consequence on the growth of the profile: 

\begin{theorem}\label{linear} \cite{pouzetrpe}  The growth of the profile of a relational structure with empty kernel  is at least linear provided that it is unbounded.  \end{theorem}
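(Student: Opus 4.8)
The plan is to recognise $\profile_R$ as the Hilbert function of an integral domain, feed this into Theorem~\ref{hilbertnonzero}, and then upgrade the resulting superadditivity to a genuine linear lower bound.

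First I would reduce to the case where $\profile_R$ is finite-valued. If $\profile_R(n)$ is infinite for some $n$, then by Fact~\ref{infiniteprofile2} it is infinite for every larger value, so the profile is a fortiori at least linear and there is nothing to prove. Assume therefore that $\profile_R$ takes only integer values; then, as recalled in Subsection~\ref{sub:agealgebra}, $\C.\age(R)$ identifies with the $\C$-space of finite linear combinations of members of $\age(R)$, and its homogeneous component of degree $n$ has dimension $\profile_R(n)$, that is $\profile_R=h_{\C.\age(R)}$.

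Next I would invoke the two structural theorems. Since $K(R)=\emptyset$, Theorem~\ref{agealgebranozero} gives that $\C.\age(R)$ is an integral domain; as $\C$ is an algebraically closed field of characteristic zero, Theorem~\ref{hilbertnonzero} applies and yields
\begin{equation}\label{eq:superaddprofile}
\profile_R(n)+\profile_R(m)-1\leq\profile_R(n+m)\qquad\text{for all }n,m\geq 0 .
\end{equation}
Because $\profile_R$ is unbounded its base set $E$ is infinite, so by Theorem~\ref{increaseinfinite} the function $\profile_R$ is non-decreasing; and unboundedness lets me fix an integer $m\geq 1$ with $c:=\profile_R(m)-1\geq 1$.

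Finally I would turn (\ref{eq:superaddprofile}) into linear growth. Iterating (\ref{eq:superaddprofile}) along multiples of $m$ gives $\profile_R(km)\geq\profile_R(0)+kc=1+kc$ for every $k\geq 0$; then for an arbitrary $n$, writing $n=km+r$ with $0\leq r<m$ and using monotonicity, $\profile_R(n)\geq\profile_R(km)\geq 1+kc\geq \frac{c}{m}\,n+(1-c)$, a linear function of $n$ with strictly positive slope. Hence $\profile_R$ grows at least linearly. The argument is essentially immediate once Theorems~\ref{agealgebranozero} and~\ref{hilbertnonzero} are granted; the only points needing care are the passage to a finite-valued profile and the remark that unboundedness forces $E$ to be infinite, which is what legitimises using monotonicity to cover the residues between consecutive multiples of $m$.
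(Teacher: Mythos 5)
Your proposal is correct and follows exactly the route the paper indicates (Theorem~\ref{linear} is presented there as an immediate consequence of Theorem~\ref{hilbertnonzero}): integrality from Theorem~\ref{agealgebranozero}, superadditivity $\profile_R(n)+\profile_R(m)-1\leq\profile_R(n+m)$ from Theorem~\ref{hilbertnonzero}, then iteration plus monotonicity (Theorem~\ref{increaseinfinite}) to get a linear lower bound. The preliminary reduction to a finite-valued profile via Fact~\ref{infiniteprofile2}, and the observation that unboundedness forces $E$ infinite, are the right details to check and you handle them correctly.
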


In fact,  
provided that the relational structures satisfy some mild
conditions, the  existence of  jumps in the behavior of the profile extends.

 Let
$\varphi : \N
\rightarrow \N$ and $\psi : \N \rightarrow \N$. Recall that $\varphi = O(\psi)$ and $\psi$
{\it grows as fast as} $\varphi$ if
$\varphi(n) \leq a \psi
(n)$ for some positive real number  $a$ and  $n$ large enough.  We say that
$\varphi$ and $\psi$ have the  
{\it same growth} if $\varphi$ grows as fast as $\psi$ and $\psi$
grows as fast as  $\varphi$. The growth of $\varphi$ is  
{\it polynomial of degree $k$}  if $\varphi$ has the same growth as  $n
\hookrightarrow
n^{k}$; in other words there are positive real numbers $a$ and $b$ such that $an^k\leq \varphi\leq bn^k$ for $n$ large enough. Note that the growth of $\varphi$ is as fast as every polynomial if and only if 
$lim_{n\rightarrow +\infty}\frac{\varphi(n)}{n^{k}}=+\infty$ for every non negative integer $k$.

\begin{theorem}\label{profilpouzet1}
  Let $R := (E, (\rho_i)_{i \in I})$ be a relational structure. The
  growth of $\profile_R$ is either polynomial or as fast as every
  polynomial provided that either the signature $\mu : = (n_i)_{i
    \in I}$ is bounded or the kernel $K(R)$ of $R$ is finite.
\end{theorem}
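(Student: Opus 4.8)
The plan is to reinterpret the statement as a dichotomy for the Hilbert function of the age algebra and then to extract, from each of the two hypotheses, an algebraic finiteness property that pins the growth down. Write $A:=\C.\age(R)$, a commutative graded $\C$-algebra with $A_0=\C$. I would first clear away the trivial cases: if $E$ is finite then $\profile_R$ has finite support and there is nothing to prove, so assume $E$ infinite; and if $\profile_R(n)$ is infinite for some $n$, then by Fact~\ref{infiniteprofile2} it is infinite for all larger $n$, so $\profile_R$ grows as fast as every polynomial and we are done. Hence assume $\profile_R$ is integer-valued; then $\profile_R$ is exactly the Hilbert function $h_A$, it is non-decreasing by Theorem~\ref{cameron}, and the goal is to show it has polynomial growth (the growth of some $n^k$) or is faster than every polynomial.

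Next I would dispose of the kernel. When $K(R)$ is finite, a routine argument on kernels lets one pass to a restriction $R':=R_{\restriction E\setminus F}$ with $F$ finite, $K(R')=\emptyset$, and $\profile_{R'}$ of the same growth as $\profile_R$: since $\profile_R$ is integer-valued, each finite restriction of $R$ is obtained from a finite restriction of $R'$ by adjoining at most $|F|$ points in one of boundedly many ways (bounded in terms of $|F|$ and $\profile_R(|F|+1)$), so the two profiles differ by at most a constant factor. Thus it suffices to treat (i) $K(R)=\emptyset$, and (ii) $\mu$ bounded (with $K(R)$ possibly infinite). In case (i), Theorem~\ref{agealgebranozero} makes $A$ an integral domain, so Theorem~\ref{hilbertnonzero} applies over $\C$ and yields $h_A(n+m)\ge h_A(n)+h_A(m)-1$; if $h_A$ is bounded it is eventually constant (a polynomial of degree $0$), and otherwise $h_A-1$ is super-additive, non-decreasing and tends to infinity.

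The heart of the proof is then a dichotomy for $A$ itself. Using that $e$ is not a zero-divisor (Theorem~\ref{cameron}) to start the construction, one looks inside $A_1$ for a finite algebraically independent set $\theta_1,\dots,\theta_d$ over which $A$ is a finitely generated module. If such a set exists, then by the Hilbert--Serre computation the generating series of $h_A$ has the form $P(t)/(1-t)^d$, so $h_A$ is eventually a polynomial of degree at most $d-1$, and being non-decreasing and not eventually $0$ it is $\Theta(n^{j})$ for some $0\le j\le d-1$ — polynomial growth. If no such finite polynomial subalgebra works, I claim $h_A$ outgrows every polynomial: a bound $h_A(n)\le bn^k$ holding along an infinite set of $n$ would, via the monotonicity / linear-algebra input of Theorem~\ref{linalg} and a Ramsey argument on the finite restrictions of $R$ in the spirit of the proofs of Theorem~\ref{increaseinfinite} and Theorem~\ref{main}, produce a finite set of degree-$1$ elements over which $A$ is module-finite, a contradiction. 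Case (ii), bounded signature with possibly infinite kernel, is handled the same way: here $A$ need not be a domain, so one runs the module-finiteness alternative directly, the bound on the arities playing the role of Theorem~\ref{hilbertnonzero} in controlling which finite restrictions appear in the Ramsey step.

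The main obstacle is precisely the implication ``$A$ not module-finite over any finite polynomial subalgebra of degree-$1$ generators $\Rightarrow$ $h_A$ faster than every polynomial.'' Super-additivity from Theorem~\ref{hilbertnonzero} only gives at-least-linear growth (Theorem~\ref{linear}) and by itself does not exclude intermediate rates such as $n\log n$; the extra leverage must come from the combinatorial hypotheses — boundedness of the signature, or finiteness of the kernel — feeding a Ramsey-type argument that converts a subpolynomial bound on infinitely many values $\profile_R(n)$ into a genuine algebraic finiteness statement about $A$. Making this conversion precise, and verifying in full the kernel-removal reduction of the second paragraph, are the two places where the real work lies.
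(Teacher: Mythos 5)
The paper does not actually contain a proof of Theorem~\ref{profilpouzet1}; it is stated with references to \cite{pouzettr}, \cite{pouzet06}, \cite{pouzetrpe}, and the underlying argument (an induction on the ordinal \emph{height} of the age, together with products $R\bigotimes L$ with a chain, almost-chainable/almost-multiple structures, and Ramsey theory) is quite different from what you propose.

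Your central dichotomy --- ``either $A$ is a finitely generated module over a polynomial subalgebra $\K[\theta_1,\dots,\theta_d]$ of degree-$1$ elements, giving polynomial growth via Hilbert--Serre, or $h_A$ outgrows every polynomial'' --- is false, and the paper itself supplies the counterexample. Module-finiteness over a finitely generated polynomial subalgebra forces $A$ to be a finitely generated $\K$-algebra, yet the paper records that the age algebra of a tournament is finitely generated \emph{if and only if} the profile is bounded, while there exist tournaments whose profile has polynomial growth of arbitrarily large degree. For any such tournament the age algebra fails to be module-finite over any polynomial subalgebra, yet the profile is polynomial, not faster than every polynomial. So the implication you flag as ``where the real work lies'' is not merely hard; it is incorrect as stated, and no Ramsey refinement can rescue it in this form. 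The actual mechanism that makes the dichotomy true is not algebraic finite generation but the combinatorial structure of the age: one shows that polynomial growth corresponds to bounded ordinal height ($H(\age(R))<\omega^\omega$) via decompositions into almost-multiple pieces, and that otherwise the profile dominates every $n^k$. Your kernel-removal step also glides over a real point: passing from finite $K(R)$ to a structure with empty kernel is not mere restriction to $E\setminus K(R)$ (whose kernel can be nonempty); one must ``code around the kernel'' by enriching the structure, and then argue that this changes the profile only by a polynomial factor. That reduction is legitimate but requires the almost-inexhaustibility machinery, not the hand-waved ``boundedly many ways'' bound.
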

Theorem  \ref{profilpouzet1} is in \cite{pouzettr}. An outline of the proof  is given in \cite{pouzet06}. A  part  appeared  in \cite{pouzetrpe},  with  a detailed proof showing  that the growth of unbounded profiles of relational structures with bounded signature is at least linear. 

The  kernel of any relational
structure which encodes an oligomorphic  permutation group is finite (indeed,  as already mentionned, if  $R$  encodes a permutation group $G$ acting on a set $E$ then $K(R)$ is the set
union  of the finite orbits of the
one-element subsets of $E$. Since  the number of these orbits is at most
$\theta_G(1)$, $K(R)$ is  finite if $G$ is oligomorphic). Hence: 

\begin{corollary}
The orbital profile of an oligomorphic group is either polynomial or faster than every polynomial.
\end{corollary}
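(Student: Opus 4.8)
The plan is to deduce the Corollary directly from Theorem \ref{profilpouzet1}, the only work being to produce a relational structure encoding $G$ and to check that its kernel is finite. First I would fix an oligomorphic permutation group $G$ acting on $E$ and, as recalled in the introduction, pick a relational structure $R$ with base $E$ whose $R$-invariant maps are exactly the $G$-invariant maps; for instance one can take, for each orbit $\mathcal{O}$ of $G$ on the set of finite tuples of a given length, a relation symbol interpreted as $\mathcal{O}$. For such an $R$ one has $\varphi_R(n)=\theta_G(n)$ for every $n$, so it suffices to prove the dichotomy for $\varphi_R$.

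Next I would verify that $K(R)$ is finite. By the description of the kernel for encoding structures recalled just above the Corollary, $K(R)$ is the union of the finite $G$-orbits among the one-element subsets of $E$. Since $G$ is oligomorphic, $\theta_G(1)$ is finite; but $\theta_G(1)$ is precisely the number of $G$-orbits of one-element subsets, so there are only finitely many such orbits, hence only finitely many finite ones, and each of them is a finite set. Therefore $K(R)$, being a finite union of finite sets, is finite.

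Finally I would invoke Theorem \ref{profilpouzet1} in its ``$K(R)$ finite'' branch: the growth of $\varphi_R$ is either polynomial or as fast as every polynomial. Translating back through $\varphi_R=\theta_G$ gives the statement for the orbital profile of $G$.

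I do not expect a genuine obstacle here, since the Corollary is a specialization of Theorem \ref{profilpouzet1}; the only points requiring a word of care are the existence of the encoding structure $R$ (so that the two profiles coincide) and the passage from ``$\theta_G(1)$ finite'' to ``$K(R)$ finite,'' which is exactly the counting remark above. One cannot in general route the argument through the ``bounded signature'' hypothesis of Theorem \ref{profilpouzet1}, because an oligomorphic group need not be encodable by finitely many relations; this is precisely why the kernel-finiteness clause of that theorem is the relevant one.
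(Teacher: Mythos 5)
Your proof is correct and follows exactly the route the paper takes: encode $G$ by a relational structure $R$, observe that $K(R)$ is the union of the finite $G$-orbits on one-element subsets and hence finite since there are at most $\theta_G(1)$ of them, and invoke Theorem~\ref{profilpouzet1} in its finite-kernel branch. Your closing remark that one cannot instead route through the bounded-signature hypothesis is a sound and useful observation, consistent with why the paper singles out the kernel condition here.
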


For groups, and graphs, there is a much more precise result than Theorem \ref{profilpouzet1}. It
is due to Macpherson, 1985 \cite{macpherson85}.
 
 \begin{theorem}
The profile of a graph or a permutation groups grows either as a polynomial or as fast as 
$f_{\varepsilon}$, where
$f_{\varepsilon}(n) = e^{n^{\frac{1}{2} - \varepsilon}}$, this for every
$\varepsilon
>0$.
\end{theorem}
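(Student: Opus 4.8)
The plan is to combine the qualitative dichotomy already furnished by Theorem~\ref{profilpouzet1} with a Ramsey‑theoretic extraction, using the partition function as a benchmark. First I would reduce: a graph is a relational structure with bounded (indeed binary) signature, and the relational structure coding a permutation group $G$ has finite kernel exactly when $G$ is oligomorphic; so in both cases Theorem~\ref{profilpouzet1} applies and $\profile_R$ is either bounded above by a polynomial — in which case there is nothing left to do — or it grows faster than every polynomial. Assume from now on the latter and fix $\varepsilon>0$; the goal is to produce $n_0$ with $\profile_R(n)\ge e^{n^{1/2-\varepsilon}}$ for all $n\ge n_0$. Replacing $R$ by a structure with the same age but as homogeneous as possible (and, in the group case, by a homogeneous structure coding $G$), we may assume $\profile_R$ is finite‑valued (the contrary case being trivial, and anyway excluded as observed after Fact~\ref{infiniteprofile2}) and that $R$ has an abundance of automorphisms — this richness is what will make the Ramsey steps below terminate.

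The slowest conceivable super‑polynomial profile is that of an \emph{almost chainable} structure carrying unboundedly many interchangeable ``block types'', the prototype being a disjoint union of cliques of pairwise distinct sizes (the $P_3$‑free, or cluster, graphs), whose profile on $n$ vertices is the partition number $p(n)$. Since $p(n)=e^{(\pi\sqrt{2/3}+o(1))\sqrt n}\ge e^{n^{1/2-\varepsilon}}$ for all large $n$, it suffices to show that the age of $R$ contains such a configuration: the number of isomorphism types realised on $n$ points will then be bounded below by a partition‑like count which still dominates $e^{n^{1/2-\varepsilon}}$, the imprecision being absorbed into $\varepsilon$.

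The heart of the argument is to extract from $R$ an infinite sequence $B_1,B_2,\dots$ of finite subsets of $E$ with $|B_i|$ strictly increasing such that: (i) the $B_i$ are \emph{interchangeable}, i.e.\ any size‑respecting order‑preserving bijection between two increasing unions $\bigcup_s B_{i_s}$ and $\bigcup_s B_{j_s}$ extends to an isomorphism between the induced restrictions; and (ii) they are \emph{independent}, i.e.\ the isomorphism type of a finite $S\subseteq\bigcup_{s\le r}B_{i_s}$ depends only on the pattern of the intersections $S\cap B_{i_s}$. I would build this by iterating Ramsey's theorem: at each stage the finiteness of $\profile_R$ lets one colour finite configurations by their isomorphism type, pass to a large monochromatic part, and peel off one further block. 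The hypothesis that $\profile_R$ is faster than every polynomial is precisely what forces the extracted sizes $|B_i|$ to be unbounded — otherwise, along the sequence, $R$ is recognised as an almost chainable structure with only finitely many block types, hence of polynomial profile, a contradiction. Once the family is in hand, a subset of size $n$ built from $r$ of the blocks has type governed by a partition of $n$ into parts drawn from the (now unbounded) set of available block sizes; counting these partitions yields $\profile_R(n)\ge e^{n^{1/2-\varepsilon}}$ for $n$ large, settling both the graph and the permutation‑group case.

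The hard part will be the dichotomy invoked in the extraction: \emph{either} an unbounded interchangeable family of independent blocks exists \emph{or} $\profile_R$ is polynomially bounded. This is a quantitative sharpening of the Fra{\"\i}ss{\'e}--Pouzet theorem that a relational structure is almost chainable iff its profile is bounded — ``finitely many block types'' must be upgraded to ``polynomial profile'', and, dually, one must rule out that super‑polynomial growth nevertheless stays below the partition function. The second delicate point is condition (ii): the blocks must be chosen so that configurations living on distinct blocks combine freely, and hence contribute \emph{multiplicatively} to the count; this is exactly where the repeated Ramsey homogenisation, together with the homogeneity/automorphism abundance (oligomorphy in the group case), is used in an essential way. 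Everything else — tracking the explicit constant $\pi\sqrt{2/3}$, matching it against $e^{n^{1/2-\varepsilon}}$, and discharging the lower‑order terms coming from the imprecise block sizes — is routine.
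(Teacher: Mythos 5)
The paper does not prove this statement; it records it as a theorem of Macpherson (1985) \cite{macpherson85}, with no argument given beyond the citation. So the comparison here is between your sketch and Macpherson's actual paper, which you should consult rather than reconstruct from scratch.

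That said, your plan has concrete gaps. First, the central dichotomy you invoke --- ``either an unbounded interchangeable family of independent blocks exists, or $\profile_R$ is polynomially bounded'' --- is precisely the content of the theorem (once one knows the partition count), and you leave it unproved; flagging it as ``the hard part'' does not discharge it. It is also not a sharpening of the Fra\"iss\'e--Pouzet bounded/almost-chainable theorem in any direct way: that result gives a characterization of \emph{bounded} profile, and there is a genuine regime between bounded and super-polynomial (polynomial, unbounded) where almost-chainability fails, so the extraction step ``bounded block sizes $\Rightarrow$ almost chainable $\Rightarrow$ polynomial'' is not sound as stated.

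Second, and more seriously, even granting the extracted block family $B_1,B_2,\dots$ with $|B_i|$ unbounded, interchangeable and mutually independent, the partition lower bound does \emph{not} follow. If each block happens to be an independent set and there are no edges between blocks --- which is exactly what Ramsey homogenization is liable to produce in a sparse graph --- then every $n$-subset of $\bigcup B_i$ induces the same structure (an $n$-element independent set), giving one isomorphism type per size, not $p(n)$. To get the partition count you need the blocks to be \emph{discriminating} (e.g.\ cliques of pairwise distinct sizes, or more generally pairwise non-embeddable), and nothing in your extraction ensures this. The half-graph $\{a_i\}\cup\{b_j\}$ with $a_i\sim b_j$ iff $i\le j$ illustrates the problem from the other direction: its profile is super-polynomial, but its growth comes from the linear interleaving pattern rather than from any family of interchangeable independent blocks of unbounded size, so an extraction of the sort you describe either fails or produces non-discriminating blocks; your sketch has no branch covering this. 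Macpherson's actual proof contains the case analysis that your sketch elides, and his treatment of the permutation-group case is not a direct reduction to the graph case via a coding relational structure; both halves require separate, substantial work.
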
 

\subsubsection{Growth rate and finite generation} A central question in the study of the profile, raised first by Cameron in the case of oligomorphic groups, is this: 
\begin{problem}\label{growthpoly} If the  profile of a relational structures $R$ with finite kernel has polynomial growth, is $\varphi_R(n)\simeq cn^{k'}$ for some positive real $c$ and some non-negative integer ${k'}$?
\end{problem} 

Let us associate  to a relational structure
$R$ whose  profile takes only finite values its  {\it generating series}
$${\mathcal H}_{\varphi_{R}}:= \sum_{n=0}^{\infty} \varphi_{R}(n)x^{n}$$

\begin{problem}\label{growthseries}
 If $R$  has  a finite kernel and $\varphi_R$ is bounded above by some polynomial, 
is the series
    $\hilbert_{\profile_R}$  a rational fraction
    of the form 
    \begin{equation}\label{quasipoly}
    \frac{P(x)}{(1-x)(1-x^2)\cdots(1-x^k)}\ 
  \end{equation}
 with  $P\in \Z[x]$? 
\end{problem}

Under the hypothesis above we  do not know if   $\hilbert_{\profile_R}$ is a rational fraction. 

It is well known that if a generating function is of the form  $\frac{P(x)}{(1-x)(1-x^2)\cdots(1-x^k)}$ then for $n$ large enough, $a_n$ is a  \emph{quasi-polynomial} of degree $k'$,  with $k'\leq k-1$,  that is a
  polynomial $a_{k'}(n)n^{k'}+\cdots+ a_0(n)$ whose coefficients
  $a_{k'}(n), \dots, a_0(n)$ are periodic functions.  Hence, a subproblem is:

 \begin{problem}\label{growthquasipoly}If $R$  has a finite kernel and $\varphi_R$ is bounded above by some polynomial, 
is  $\profile_R(n) $ a
quasi-polynomial for $n$ large enough?
  \end{problem}
 
  \begin{remark}Since the
  profile is non-decreasing, if $\profile_R(n) $ is a
quasi-polynomial for $n$ large enough then  $a_{k'}(n)$ is eventually
  constant. Hence the profile has polynomial growth in the sense that
  $\profile_R(n)\sim c n^{k'}$ for some positive real $c$ and $k'\in \N$. Thus, in this case,  Problem \ref{growthpoly} has a positive solution.
  \end{remark}
A special case was solved positively with N.Thi\'ery \cite{pouzetthiery}.
 
These problems are linked with the structure of the age algebra. Indeed, 
if  a graded 
 algebra $A$ is finitely generated, then,  since $A$ is a quotient of a polynomial ring $\K[x_{1},\dots, x_{d}]$,  its Hilbert function is bounded above by a polynomial. And, in fact, as it is well known, its Hilbert  series is a fraction of  form $\frac{P(x)}
{(1-x)^d}$, thus of the form given in (\ref{quasipoly}). Moreover, one can choose a numerator  with  non-negative coefficients whenever the algebra is Cohen-Macaulay.  Due to Problem \ref{growthseries}, one could be tempted to conjecture that  these  sufficient   conditions are necessary in the case of age agebras.
Indeed, from Theorem \ref{cameron} one deduces easily:
 \begin{theorem}\label{finitemodule} The profile of $R$ is bounded if and only   if $\K.\age(R)$ is finitely generated as a module over $U$, the graded algebra generated by $e$.  In particular, if one of these equivalent conditions holds,  $\K.\age (R)$ is finitely generated
 \end{theorem}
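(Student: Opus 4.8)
The plan is to prove the two implications separately, using only Theorem~\ref{cameron} (multiplication by $e$ is injective on $\K.\age(R)$ when $R$ is infinite), Theorem~\ref{increaseinfinite} (the profile of an infinite structure is non-decreasing), and the bookkeeping facts that $\dim_{\K}\K.\age(R)_n=\varphi_R(n)$ and that $U_n=\K.e^n$ is at most one-dimensional. First I would dispose of the trivial case: if $R$ is finite then $\K.\age(R)$ is finite-dimensional over $\K$, hence certainly a finitely generated $U$-module, and $\varphi_R$ is bounded, so both sides of the equivalence hold. From here on I assume $R$ infinite.

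For the forward direction, suppose $\varphi_R$ is bounded. By Theorem~\ref{increaseinfinite} it is non-decreasing, hence eventually constant, say $\varphi_R(n)=\varphi_R(N)$ for all $n\ge N$. By Theorem~\ref{cameron}, the map $u\mapsto eu$ sends $\K.\age(R)_n$ injectively into $\K.\age(R)_{n+1}$; for $n\ge N$ the source and target have equal finite dimension, so this map is onto. Iterating, $\K.\age(R)_n=e^{\,n-N}\K.\age(R)_N$ for all $n\ge N$, and therefore $\K.\age(R)=U\cdot V$ with $V:=\sum_{n\le N}\K.\age(R)_n$ a finite-dimensional space (finitely many summands, each of dimension at most $\sup_n\varphi_R(n)$). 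Any $\K$-basis of $V$ is then a finite generating set of $\K.\age(R)$ over $U$.

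For the converse, suppose $\K.\age(R)$ is a finitely generated $U$-module. Since $U$ is graded and $\K.\age(R)$ is a graded $U$-module, I would pick homogeneous generators $g_1,\dots,g_k$ of degrees $d_1,\dots,d_k$. Writing a general element of $\K.\age(R)_n$ as $\sum_i u_ig_i$ and matching degrees forces the relevant component of $u_i$ to lie in $U_{n-d_i}=\K.e^{\,n-d_i}$, which vanishes for $n<d_i$; hence $\K.\age(R)_n$ is spanned by the at most $k$ elements $e^{\,n-d_i}g_i$ with $d_i\le n$, so $\varphi_R(n)\le k$ for every $n$, i.e.\ $\varphi_R$ is bounded. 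The ``in particular'' statement then follows at once: $U$ is generated as a $\K$-algebra by $e$ alone, so $e$ together with a finite $U$-module generating set of $\K.\age(R)$ generates it as a $\K$-algebra.

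I do not anticipate a real obstacle here: the proof is linear algebra once the two quoted theorems are in hand. The only steps needing a moment's attention are the reduction to $R$ infinite (so that Theorem~\ref{cameron} is available and multiplication by $e$ becomes an isomorphism once the profile has stabilised) and the standard fact that a finitely generated module over a graded ring admits a finite homogeneous generating set.
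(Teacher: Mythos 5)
Your proof is correct and matches the route the paper intends: the author states the result as a direct deduction from Theorem~\ref{cameron}, and your argument is exactly that deduction spelled out, using the injectivity of multiplication by $e$ to show the homogeneous components stabilise once the profile becomes constant, and the graded-module bookkeeping for the converse. The only cosmetic difference is that you invoke Theorem~\ref{increaseinfinite} separately, whereas the paper treats the non-decreasing property as itself an immediate consequence of Theorem~\ref{cameron} (via the dimension count); either phrasing is fine.
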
 
 
 But this case is exceptional. The conjecture can be disproved with tournaments. Indeed, on one hand, there are tournaments  whose profile has arbitrarily large polynomial growth rate and, on an other hand,  the age algebra of a tournament  is finitely generated if and only if the profile of the tournament is bounded (this result was obtained with N.Thiery, a proof is presented in \cite{pouzet06}).  
 
 \subsubsection{Initial  segments of an age and ideals of a ring}
No concrete description of  relational structures with  bounded signature, or finite kernel, which have polynomial growth is known. In \cite{pouzettr} (see also \cite {pouzet06}) we proved that if a relational structure $R$ has this property then its age, $\age(R)$,  is {\it well-quasi-ordered} under embeddability, that is  every final segment of $\age (R)$ is finitely generated, which amounts to the fact that the collection  $F(\age(R))$ of final segments of $\age(R)$ is noetherian, w.r.t. the inclusion order. Since the   fundamental paper of Higman\cite {higman52},  applications of the notion of  well-quasi-ordering have proliferated (eg see the Robertson-Seymour's theorem for an application to graph theory \cite{diestel} ). Final segments play for  posets the same role than ideals for rings. Noticing that  an age algebra is finitely generated if and only if it is noetherian,  we are lead to  have a closer look at the relationship between the basic objects of the theory of relations and of ring theory, particularly ages and ideals.   

We mention the following result which will be incorporated into a joint paper with  N.Thi\'ery. 
\begin{proposition} Let $\mathcal A$ be the  age of a relational structure $R$ such that the profile of $R$  takes only finite values and $\K.\age $ be its age algebra. If $\mathcal A'$ is an initial segment of $\mathcal A$ then:
 \begin{enumerate} [{(i)}]
 \item The vector subspace $J:= \K.(\mathcal A\setminus \mathcal A')$ spanned by $\mathcal A\setminus \mathcal A'$ is an ideal of $\K.\age $. Moreover, the quotient of $\K. \age$ by $J$ is  a ring isomorphic to  the ring $\K.\mathcal A'$. 
 \item If this ideal is irreducible then $\mathcal A'$ is a subage of $\mathcal A$.
  \item This is a prime ideal if and only if $\mathcal A'$ is an inexhaustible age.
 \end{enumerate}
\end{proposition}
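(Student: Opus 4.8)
\medskip
\noindent\textit{Proof proposal.} The plan is to pass to the concrete model of $\K.\age$ furnished by the finiteness of the profile and then to reduce everything to Theorem~\ref{agealgebranozero}. Since $\varphi_R$ takes only finite values, $\K.\age$ is the graded $\K$-vector space with basis $\mathcal A$, and for types $a,b$ the product is $a\cdot b=\sum_{c}\lambda^{c}_{a,b}\,c$, where $\lambda^{c}_{a,b}$ is the number of subsets $P$ of the domain of a fixed representative of $c$ with $c_{\restriction P}$ of type $a$ and $c_{\restriction(\mathrm{dom}\,c\setminus P)}$ of type $b$. I would first record three facts: the $\lambda^{c}_{a,b}$ are \emph{non-negative integers} (so a product of two types never cancels), they are \emph{intrinsic to the types} (independent of representatives or of an ambient structure), and $\lambda^{c}_{a,b}\neq 0$ forces $|c|=|a|+|b|$ and forces both $a$ and $b$ to embed into $c$ on disjoint complementary domains. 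Recall also that an initial segment of $\mathcal A$ is precisely a hereditary (downward closed for embeddability) subset of $\mathcal A$. For (i): if $a\in\mathcal A$ and $b\in\mathcal A\setminus\mathcal A'$, then every $c$ occurring in $a\cdot b$ satisfies $b\hookrightarrow c$, hence $c\notin\mathcal A'$ by heredity of $\mathcal A'$; thus $a\cdot b\in J$ and $J$ is an ideal by bilinearity. By the ring $\K.\mathcal A'$ I mean the span of $\mathcal A'$ with the truncated product $a\cdot b:=\sum_{c\in\mathcal A'}\lambda^{c}_{a,b}\,c$; the linear surjection $\pi:\K.\age\to\K.\mathcal A'$ fixing the basis vectors in $\mathcal A'$ and killing those in $\mathcal A\setminus\mathcal A'$ has kernel $J$, and transporting the quotient ring structure along $\pi$ returns exactly this truncated product, so $\K.\age/J\cong\K.\mathcal A'$ as rings. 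When $\mathcal A'=\age(R')$ for some relational structure $R'$, intrinsicality of the $\lambda^{c}_{a,b}$ makes this ring literally equal to $\K.\age(R')$.

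\medskip
For (ii) I would argue by contraposition. Suppose $J$ is proper (so $\mathcal A'\neq\emptyset$) and $\mathcal A'$ is not up-directed, witnessed by $a,b\in\mathcal A'$ with no common upper bound in $\mathcal A'$. Put $\mathcal A'_{a}:=\{x\in\mathcal A':a\not\hookrightarrow x\}$ and $\mathcal A'_{b}:=\{x\in\mathcal A':b\not\hookrightarrow x\}$. A short check shows these are initial segments of $\mathcal A$, properly contained in $\mathcal A'$ (since $a\notin\mathcal A'_{a}$ and $b\notin\mathcal A'_{b}$), with $\mathcal A'_{a}\cup\mathcal A'_{b}=\mathcal A'$ (an element of $\mathcal A'$ outside the union would be a common upper bound of $a$ and $b$ in $\mathcal A'$). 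By (i), $J_{a}:=\K.(\mathcal A\setminus\mathcal A'_{a})$ and $J_{b}:=\K.(\mathcal A\setminus\mathcal A'_{b})$ are ideals strictly containing $J$, and, being spanned by subsets of the basis $\mathcal A$, they satisfy $J_{a}\cap J_{b}=\K.\bigl(\mathcal A\setminus(\mathcal A'_{a}\cup\mathcal A'_{b})\bigr)=J$; thus $J$ is reducible. Hence if $J$ is irreducible then $\mathcal A'$ is a nonempty hereditary up-directed subclass of $\mathcal A$, that is, a subage of $\mathcal A$.

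\medskip
For (iii): by (i), $J$ is prime exactly when $\K.\mathcal A'$ is a nonzero integral domain. If it is, then for all $a,b\in\mathcal A'$ one has $a\cdot b=\sum_{c\in\mathcal A'}\lambda^{c}_{a,b}\,c\neq 0$, and since the $\lambda^{c}_{a,b}$ are non-negative integers no term can cancel, so some $c\in\mathcal A'$ has $\lambda^{c}_{a,b}>0$; thus any two members of $\mathcal A'$ embed disjointly into a third member of $\mathcal A'$, i.e.\ $\mathcal A'$ has the disjoint embedding property, and in particular (taking the union of the two disjoint copies) is up-directed, hence an inexhaustible age. Conversely, if $\mathcal A'$ is an inexhaustible age, choose $R'$ with $\age(R')=\mathcal A'$; by the characterization recalled in the introduction, inexhaustibility of $\mathcal A'$ is exactly the emptiness of $K(R')$, so Theorem~\ref{agealgebranozero} yields that $\K.\age(R')$ is an integral domain. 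By (i) this ring is $\K.\age/J$, which is therefore a domain, i.e.\ $J$ is prime.

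\medskip
No step is deep once Theorem~\ref{agealgebranozero} is in hand. The points that need care are the bookkeeping of the structure constants $\lambda^{c}_{a,b}$ --- their non-negativity, which rules out hidden cancellations and makes the combinatorial conditions ``$\mathcal A'$ up-directed'' and ``$\mathcal A'$ inexhaustible'' visible at the level of the ring, and their intrinsic character, which identifies the truncated algebra $\K.\mathcal A'$ with a genuine age algebra as soon as $\mathcal A'$ is an age --- together with the step in (iii) of realizing the abstract class $\mathcal A'$ by a relational structure $R'$, which is what makes Theorem~\ref{agealgebranozero} applicable. I expect this last realization step, and the check that $\K.\age(R')$ really carries the truncated product, to be where the only genuine subtlety lies.
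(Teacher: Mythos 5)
Your proof is correct and follows the plan the paper sketches in one sentence (items (i) and (ii) are immediate combinatorial checks, item (iii) is obtained by invoking Theorem~\ref{agealgebranozero}); you have simply supplied the details that the paper leaves to the reader. Two small remarks: in (iii), the non-negativity of the structure constants $\lambda^c_{a,b}$ is not actually needed to pass from $a\cdot b\neq 0$ to ``some $\lambda^c_{a,b}>0$ with $c\in\mathcal A'$'' --- a nonzero vector has a nonzero coordinate regardless of sign --- the non-negativity is rather what makes the intrinsic identification $\K.\age/J\cong\K.\age(R')$ and the combinatorial reading of the product transparent; and in (ii) the contraposition is complete only because you explicitly restrict to $J$ proper, so the degenerate case $\mathcal A'=\emptyset$ (where $J=\K.\age$ is not irreducible by convention) is disposed of before the two witnessing ideals $J_a,J_b$ are built. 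Both points are handled correctly in your write-up, so no repair is needed.
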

The proof of Item $(i)$ and Item $(ii)$ are immediate. The proof of Item $(iii)$ is essentially based on Theorem \ref{agealgebranozero}. 

According to Item $(i)$, $F(\mathcal A)$ embeds into the collection of ideals of $\K.\mathcal A)$. Consequently:
\begin{corollary} If an age algebra is finitely generated then the age is well-quasi-ordered by embeddability.
\end {corollary}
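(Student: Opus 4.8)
The plan is to deduce the Corollary from Item $(i)$ of the Proposition, the Hilbert basis theorem, and the characterization of well-quasi-order recalled above: an age $\age$ is well-quasi-ordered by embeddability exactly when the poset $F(\age)$ of its final segments, ordered by inclusion, is noetherian. I would begin with a preliminary remark. If $\K.\age$ is finitely generated as a $\K$-algebra, it is generated by finitely many \emph{homogeneous} elements (replace each generator by its finitely many homogeneous components), so every homogeneous component of $\K.\age$ is finite-dimensional; since the degree-$n$ component contains the linearly independent characteristic maps of the isomorphism types of the $n$-element restrictions of $R$, the profile $\varphi_R$ takes only finite values. Hence the hypothesis of the Proposition is met, $\K.\age$ identifies with the set of finite linear combinations of members of $\age$, and these members form a $\K$-basis of $\K.\age$. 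Moreover a finitely generated commutative $\K$-algebra is a quotient of some polynomial ring $\K[x_1,\dots,x_d]$, hence noetherian by the Hilbert basis theorem, so its ideals satisfy the ascending chain condition.

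Next I would make explicit the embedding mentioned just before the Corollary. To a final segment $\mathcal F$ of $\age$ associate the initial segment $\mathcal A':=\age\setminus\mathcal F$ and the subspace $\K.\mathcal F=\K.(\age\setminus\mathcal A')$ spanned by $\mathcal F$; by Item $(i)$ of the Proposition it is an ideal of $\K.\age$. The map $\mathcal F\mapsto\K.\mathcal F$ from $(F(\age),\subseteq)$ to the poset of ideals of $\K.\age$ is monotone, and it is \emph{strictly} monotone: if $\mathcal F_1\subsetneq\mathcal F_2$, then any isomorphism type $\tau\in\mathcal F_2\setminus\mathcal F_1$ lies in $\K.\mathcal F_2$ but not in $\K.\mathcal F_1$, because the members of $\age$ are linearly independent and $\tau\notin\mathcal F_1$. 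Therefore a strictly increasing sequence $\mathcal F_1\subsetneq\mathcal F_2\subsetneq\cdots$ in $F(\age)$ would produce a strictly increasing sequence of ideals $\K.\mathcal F_1\subsetneq\K.\mathcal F_2\subsetneq\cdots$ of $\K.\age$, contradicting the ascending chain condition just established. So $F(\age)$ is noetherian for inclusion; equivalently, every final segment of $\age$ is finitely generated, i.e.\ $\age$ is well-quasi-ordered by embeddability.

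There is no serious obstacle here; the two points that need a line of checking are peripheral. First, one must notice that $\mathcal F\mapsto\K.\mathcal F$ reflects strict inclusions, which is precisely where the linear independence of the isomorphism types in $\age$ enters. Second, one must observe that the finiteness of the profile required to apply the Proposition is automatic once $\K.\age$ is finitely generated. The only genuinely used ingredients are Item $(i)$ of the Proposition — whose verification is the routine remark that, for an up-closed family $\mathcal F$, the product in the sense of Equation (\ref{eq:product}) of the characteristic map of an isomorphism type with the characteristic map of a type $\tau\in\mathcal F$ is supported on types lying above $\tau$, hence again in $\mathcal F$ — and the Hilbert basis theorem; the Corollary is thus a bookkeeping consequence of the order-embedding of $F(\age)$ into the poset of ideals of $\K.\age$, together with the noetherianity of that ring.
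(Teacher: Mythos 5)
Your proof is correct and follows the same route the paper indicates: you use Item $(i)$ of the Proposition to obtain a (strictly) monotone embedding of the poset $F(\age)$ of final segments into the ideal lattice of $\K.\age$, invoke the Hilbert basis theorem to see that a finitely generated $\K$-algebra is noetherian, and conclude via the characterization of well-quasi-order in terms of noetherianity of $F(\age)$. The preliminary remark on the finiteness of the profile (to license the identification of $\K.\age$ with $\K$-linear combinations of isomorphism types) and the check that the embedding reflects strict inclusions are exactly the details the paper leaves implicit; you supply them correctly.
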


\begin{problem}
How the finite generation of an age algebra translates in terms of embeddability between members of the ages?
\end{problem}

\subsubsection{Links with language theory}

In the  theory of  languages,  one of the basic results is that the generating series of a
regular language is a rational fraction (see \cite{berstel}). This result is not far away from our considerations. Indeed, if  $\mathcal A$ is  a finite alphabet, with say $k$ elements,  and $\mathcal A^*$ is the set of words over $\mathcal A$, then each word can be viewed as a finite chain coloured by $k$ colors. Hence $\mathcal A^*$ can be viewed as  the age of the  relational structure  $R$ made of the chain $\Q$ of rational numbers  divided into $k$ colors 
in such a way that,  between two distinct rational numbers, all colors  appear. Moreover, as pointed out by Cameron \cite{cameron1}, the age algebra $\Q.\age(R)$ is isomorphic to the shuffle algebra over $\mathcal A$, an important object in algebraic combinatorics (see \cite {lothaire}).
\begin{problem} 
Does the members of the age of a relational structure with polynomial growth can be coded by  words forming a regular language?
\end{problem}

\begin{problem} Extend the properties of  regular languages to subsets of the collection $\Omega_{\mu}$ made of isomorphic types of finite relational structures with signature $\mu$.
\end{problem}


\section{Proof of Theorem \ref{main}}\label {sec:proof} The proof idea of Theorem \ref{main} is very simple and we give it first. 

We prove the result by induction. We suppose that it holds for pairs  $(m-1, n)$.  Now, let  
$f:[E]^m\rightarrow
\K$ and 
$g:[E]^n\rightarrow
\K$  such that $fg$ is zero, but $f$ and $g$ are not. As already mentionned,  $m$ and $n$ are non zero, hence members of $supp(f)\cup supp(g)$ are non empty. Let $sup(f,g):= \{ (A,B)\in supp(f)\times supp(g):   A\cap B=\emptyset\}$.  We may suppose $sup(f,g)\not =\emptyset$, otherwise the conclusion of Theorem \ref{main} holds with  $t:=m+n-1$. For the sake of simplicity, we suppose that $\K:=\Q$. In this case, we color elements $A$ of $[E]^m$ into three colors :-,0, +, according to the value of $f(A)$. We do the same with elements $B$ of $[E]^{n}$ and we color each member $(A,B)$ of $supp(f,g)$ with the colors of its components. With the help of  Ramsey' theorem and a lexicographical ordering, we prove that if the transversality is large enough there is an $(m+n)$-element subset $Q$ such that all pair $(A,B)\in supp(f,g)(Q):= supp(f,g)\cap ([Q]^{m}\times [Q]^{n})$ have the same color. This readily implies that $fg(Q)\not =0$, a contradiction. If $\K\not =\Q$, we may replace the three colors by five, as the following lemma indicates.

\begin{lemma}\label{lem:field}Let $\K$ be a field with characteristic zero. There is a partition of $\K^*:= \K\setminus \{0\}$ into at most
four blocks such that for every integer 
$k$  and every $k$-element sequences
$(\alpha_1,\dots,\alpha_k)\in D^k$ ,
$(\beta_1,\dots,\beta_k)\in D^{'k}$, where $D$, $D'$ are two blocks of the partition of $\K^*$, then
$\sum_{i=1}^{k}\alpha_{i}\beta_{i}\in \K^*$.
\end{lemma}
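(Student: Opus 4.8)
The plan is to reduce the statement to the case of an algebraically closed field that carries a ``quadrant'' decomposition coming from an ordered subfield, and there to produce the partition by hand.

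\emph{Reduction.} Any homomorphism between fields is injective, and if $\iota\colon\K\hookrightarrow\Omega$ is an embedding of fields and the blocks of a partition of $\K^*$ are defined as the $\iota$-preimages of the blocks of a partition of $\Omega^*$, then for $\alpha_i$ in one block and $\beta_i$ in another we have $\iota\bigl(\sum_i\alpha_i\beta_i\bigr)=\sum_i\iota(\alpha_i)\iota(\beta_i)$, so $\sum_i\alpha_i\beta_i\ne 0$ as soon as the corresponding sum in $\Omega$ is nonzero. Hence it suffices to exhibit, for \emph{some} field $\Omega$ of characteristic zero into which $\K$ embeds, a partition of $\Omega^*$ into at most four blocks with the required property. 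I would take $\Omega:=\Omega_0(\sqrt{-1})$ where $\Omega_0$ is a real closed field with $\mathrm{trdeg}_{\Q}\Omega_0\ge \mathrm{trdeg}_{\Q}\K$. Such an $\Omega_0$ exists: take a purely transcendental extension $\Q(x_i\colon i\in\kappa)$ with $\kappa=\mathrm{trdeg}_{\Q}\K$, note it is formally real (a sum of squares equal to $-1$ would involve only finitely many $x_i$, and after clearing denominators could be specialized to a rational point avoiding the common denominator's zero set, giving a contradiction), hence orderable, and let $\Omega_0$ be its real closure for some ordering. Then $\Omega=\Omega_0(\sqrt{-1})$ is algebraically closed of characteristic zero with $\mathrm{trdeg}_{\Q}\Omega=\kappa$, so $\K\hookrightarrow\overline{\K}\hookrightarrow\Omega$.

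\emph{The quadrant partition.} On $\Omega=\Omega_0\oplus\Omega_0\sqrt{-1}$, write a nonzero element as $z=a+b\sqrt{-1}$ with $a,b\in\Omega_0$, and put $z\in Q_1$ if $a>0,\ b\ge 0$; $z\in Q_2$ if $a\le 0,\ b>0$; $z\in Q_3$ if $a<0,\ b\le 0$; $z\in Q_4$ if $a\ge 0,\ b<0$. A short sign check shows this is a partition of $\Omega^*$ and that $\sqrt{-1}\cdot Q_j=Q_{j+1}$ (indices read mod $4$), whence $Q_j=(\sqrt{-1})^{\,j-1}Q_1$. The crux is the inclusion $Q_1\cdot Q_1\subseteq C$, where $C:=\{a+b\sqrt{-1}\colon b>0\}\cup\{a\colon a>0\}$: indeed $(a+b\sqrt{-1})(c+d\sqrt{-1})=(ac-bd)+(ad+bc)\sqrt{-1}$ has second coordinate $ad+bc\ge 0$ when $a,c>0$ and $b,d\ge 0$, and this vanishes only if $b=d=0$, in which case the product is $ac>0$. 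Because $\Omega_0$ is an ordered field, $C$ is closed under addition and $0\notin C$; therefore every finite nonempty sum of elements of $C$ lies in $C$, hence is nonzero. Multiplying by the unit $(\sqrt{-1})^{\,a+b-2}$, the same holds for $Q_a\cdot Q_b\subseteq(\sqrt{-1})^{\,a+b-2}\,C$ for all $a,b$, which is exactly the desired conclusion over $\Omega$; pulling back along $\K\hookrightarrow\Omega$ finishes the proof.

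\emph{Main obstacle.} I expect the genuine content to be the reduction step, i.e.\ producing a real closed field $\Omega_0$ of prescribed transcendence degree with $\K\hookrightarrow\Omega_0(\sqrt{-1})$; this rests on the orderability of purely transcendental extensions of $\Q$ together with the classification of algebraically closed fields of characteristic zero up to embedding by transcendence degree. The remaining part, the quadrant decomposition and the convex-cone argument over the ordered field $\Omega_0$, is routine sign bookkeeping, the only slightly delicate point being which boundary rays (the two coordinate axes) are assigned to which block — a choice forced by requiring $\sqrt{-1}\cdot Q_j=Q_{j+1}$.
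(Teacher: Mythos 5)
Your proof is correct, but it takes a genuinely different route from the paper. The paper also reduces to the case of $\C$ with its quadrant-by-argument partition, but the gluing mechanism is model-theoretic: it axiomatizes ``there is a four-block partition with the inner-product property'' over the diagram of $\K$ (one sentence per $k$), and invokes the compactness theorem; consistency of a finite fragment is checked by noting that it only mentions a finitely generated subfield of $\K$, which embeds in $\C$ and thus inherits the quadrant partition. You instead build a single concrete target $\Omega=\Omega_0(\sqrt{-1})$ with $\Omega_0$ real closed of large enough transcendence degree, embed all of $\K$ into it via Steinitz's classification of algebraically closed fields of characteristic zero, and define the quadrants purely algebraically from the order on $\Omega_0$; the cone $C$ being closed under addition and missing $0$ then does the work the geometry of $\C$ does in the paper. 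What each buys: the paper's route is short and makes minimal algebraic demands (it never needs real closed fields or Artin--Schreier), at the cost of invoking first-order compactness and leaving the extension field abstract. Your route avoids model theory entirely and makes explicit the algebraic structure (ordered field plus $\sqrt{-1}$) that the quadrant trick actually uses, which also clarifies that the $\C$-picture is not essential -- any ordered field in place of $\R$ would do -- at the cost of the heavier machinery of real closures and the Steinitz embedding theorem. One very small stylistic remark: since the lemma only needs a partition of $\K^*$, not of $\Omega^*$, you could streamline the reduction by simply transporting the four quadrant conditions along any field embedding $\K\hookrightarrow\Omega$ and noting the pulled-back blocks automatically cover $\K^*$; you do say this, and it is right.
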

\begin {proof}
This holds trivially if $\K:=\C$. For an example, divide $\C^*$ into the  sets $D_{i}:= \{z\in
\C^*:
\frac{\pi i}{2}\leq Arg z<
\frac{\pi (i+1)}{2}\}$ ($i<4$). If $\K$ is arbitrary, use the Compactness theorem of first-order logic, under the form of the "diagram
method" of A.Robinson \cite{hodges}. Namely, to the language of fields, add names for the elements of $\K$, a binary
predicate symbol, and axioms, this  in such a way that a model, if any, of the resulting theory $T$ will be an extension of
$\K$ with a partition satisfying the conclusion of the lemma. According to the Compactness theorem of first-order logic, the existence of a model of $T$, alias the consistency of $T$, reduces to the consistency of
every  finite subset $A$ of $T$. A finite subset $A$ of $T$ leads to a finitely generated subfield of $\K$. Such subfield is isomorphic to a
subfield  of
$\C$ (see \cite {hodges} Example 2, p.99, or \cite{bourbaki} Proposition 1, p. 108). This latter  subfield equipped with the partition induced by the partition existing on $\C^*$  satisfies the conclusion of the lemma, hence is a model of $T$, proving that 
$A$ is consistent.  \end{proof}\\

Let $\T^*$ be the set of these four blocks, let $\ T:= \T\cup \{0\}$ and let $\chi $ be the map from $\K$ onto $\T$. 

%


  \subsection{Invariant relational structures and their age algebra}\label {sec:proof}
 \subsubsection{Isomorphism, local isomorphism}
  Let $R:= (E, (\rho_i)_{i\in I})$ and $R':= (E', (\rho'_i)_{i\in I})$ be two
relational structures having the same signature $\mu:=(m_i)_{i\in I}$.
 A map $h: E\rightarrow E'$ is \emph{an isomorphism  from} $R$ {onto} $R'$ if
 \begin{enumerate}
 \item $h$ is bijective,
 \item $(x_1, \dots, x_{m_i})\in \rho_{i}$ if and only if $(h(x_1), \dots, h(x_{m_i}))\in \rho'_{i}$ for every $(x_1, \dots, x_{m_i})\in E^{m_i}$, $i\in I$.
 \end{enumerate}
A \emph{ partial map of $E$} is a map $h$ from a subset $A$ of $E$ onto a subset $A'$ of $E$, these subsets are the \emph{domain} and \emph{codomain} of $h$.  A
\emph{local isomorphism}  of $R$ if  a partial map $h$ which is an  isomorphism from $R_{\restriction A}$ onto $R_{\restriction A'}$ (where $A$ and $A'$ are the domain an codomain of $h$). 

\subsubsection{Invariant relational structures}
A {\it chain} is a pair $L:= (C, \leq )$  where $\leq $ is a linear order on $C$.  Let $L$ be  a chain. Let $V$ be a non-empty set, $F$ be a set disjoint from $V\times C$ and let $E:=F\cup (V\times C)$.  Let $R$ be a relational structure   with base set $E$. Let $r$ be a non-negative integer, $r\leq \vert C\vert$. Let $X,X'\in [C]^{r}$. Let $\ell$ be the unique order isomorphism from $L_{\restriction X}$ onto $L_{\restriction X'}$ and  let $\overline \ell:= 1_F\cup (1_V, \ell)$ be the  partial map such that $\overline \ell(x)=x$ for $x\in F$ and $\overline \ell(x,y)=(x, \ell(y))$ for $(x,y)\in V\times X$.  

We say that $X$ and $X'$ are \emph{equivalent} if $\overline \ell$ is  an isomorphism of $\mathcal H_{\restriction F\cup V\times X}$ onto $\mathcal H_{\restriction F\cup V\times X'}$. This defines an equivalence relation on $[C]^{r}$. 

We say that $R$ is \emph{$r-F-L$-invariant} if two arbitrary members of $[C]^r$ are equivalent. We say that $R$ is \emph{$F-L$-invariant} if it is $r-F-L$-invariant for every non-negative integer $r$, $r\leq \vert C\vert$.

It is easy to see that if  the signature $\mu$ of $R$ is bounded and $r:=Max( \{m_i: i\in I\})$, $R$ is  $F-L$-invariant  if and only if it is $r'-F-L$-invariant  for every $r'\leq r$. In fact:

\begin{lemma} \label{lem:invar} If $\vert C\vert>r:=Max( \{m_i: i\in I\})$, $R$ is  $F-L$-invariant  if and only if it is $r-F-L$-invariant. 
\end{lemma}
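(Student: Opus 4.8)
The plan is to prove the non-trivial implication (the converse being immediate from the definition), so assume $R$ is $r$-$F$-$L$-invariant and $\vert C\vert\ge r+1$, and aim to show $R$ is $w$-$F$-$L$-invariant for every $w\le\vert C\vert$. First I would record the standard fact that, since every relation of $R$ has arity $m_i\le r$, a bijection between two subsets of $E$ is an isomorphism of the induced substructures as soon as its restriction to every subset of its domain of size at most $r$ is an isomorphism onto its image. Applying this to the maps $\overline{\ell}$: given a subset $Z$ of $F\cup V\times X$ of size at most $r$, project it onto the set $Z_C\subseteq C$ of its ``$C$-coordinates'', so that $\vert Z_C\vert\le r$, $Z\subseteq F\cup V\times Z_C$, and $\overline{\ell}$ restricted to $F\cup V\times Z_C$ is the map induced by the canonical order isomorphism $Z_C\to\ell(Z_C)$. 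Hence $X,X'\in[C]^s$ are equivalent provided that any two subsets of $C$ of size at most $r$ related by the canonical order isomorphism are equivalent; that is, $R$ is $F$-$L$-invariant if and only if $R$ is $w$-$F$-$L$-invariant for every $w\le r$ (the ``easy'' remark preceding the lemma). It remains to deduce $w$-$F$-$L$-invariance for $w<r$ from the hypothesis.

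I would do this by downward induction on $w$: assuming $(w+1)$-$F$-$L$-invariance, derive $w$-$F$-$L$-invariance, using only $\vert C\vert\ge w+2$, which holds for every $w\le r-1$ exactly because $\vert C\vert>r$ (the case $w=r-1$ being the one that forces this hypothesis). A $w$-element subset $W$ of $C$ determines $w+1$ gaps, which partition $C\setminus W$ into ``slots''. The key elementary observation: if $W$ and $W'$ have a common non-empty slot, pick $c\in C\setminus W$ and $c'\in C\setminus W'$ of the same rank relative to $W$, respectively $W'$; then the canonical order isomorphism $W\cup\{c\}\to W'\cup\{c'\}$ extends the one $W\to W'$, so $(w+1)$-$F$-$L$-invariance applied to these $(w+1)$-element sets, followed by restriction of the resulting isomorphism to $F\cup V\times W$, shows $W$ and $W'$ equivalent. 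Since equivalence of $w$-element subsets is an equivalence relation, it is enough to prove that the graph $G_w$ on $[C]^w$, with $W$ adjacent to $W'$ when they share a non-empty slot, is connected.

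This connectivity is the crux, and it is where $\vert C\vert>r$ is genuinely used: if $\vert C\vert=w+1$ then every $w$-subset has exactly one non-empty slot and $G_w$ has no edges. Writing $C=\{c_1<\dots<c_N\}$ with $N\ge w+2$, I would connect an arbitrary $W$ to $W_0:=\{c_1,\dots,c_w\}$ by induction on $d(W):=\vert W\setminus\{c_1,\dots,c_w\}\vert$. If $d(W)\ge1$, the largest element of $W$ has index $>w$; replacing it (or replacing $c_N$, when $c_N\in W$) by a suitable element of $\{c_1,\dots,c_w\}\setminus W$ produces, along one, respectively two, edges of $G_w$, a set $W'$ with $d(W')=d(W)-1$, the two-edge case being routed through an explicit ``spread-out'' intermediate $w$-set whose existence uses precisely $N\ge w+2$. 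Iterating yields connectivity, hence $R$ is $w$-$F$-$L$-invariant, completing the induction and the proof.

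The obstacle I expect to be the only non-routine point is exactly this last step — the ``clustering'' phenomenon whereby a $w$-subset $W$ and its intended image $W'$ can fail to extend simultaneously to order-isomorphic $(w+1)$-subsets, forcing one to move inside $G_w$ instead of in a single step — together with the bookkeeping needed to check that the required intermediate sets exist and are $G_w$-adjacent; everything else is routine manipulation of order-isomorphisms and of restrictions of isomorphisms.
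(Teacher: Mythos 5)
Your proposal is correct in its overall structure and does prove the lemma, but the inductive step is organized differently from the paper's and ends up being harder. Both arguments reduce the statement to: assuming $(w+1)$-$F$-$L$-invariance with $\vert C\vert\ge w+2$, show that any two $w$-element subsets of $C$ are equivalent (this is the paper's Lemma~\ref{heredit}, proved by the same downward induction you use, combined with the same bounded-arity observation). The difference is in that single step. The paper fixes once and for all a $(w+1)$-element set $Z\subseteq C$ together with one element $c$ strictly above $\max Z$, then (a) reduces an arbitrary $X\in[C]^w$ to an equivalent $Y\subseteq Z$ by extending $X$ to some $X_1\in[C]^{w+1}$ and applying $(w+1)$-invariance to the pair $(X_1,Z)$, and (b) shows any two $Y,Y'\subseteq Z$ of size $w$ are equivalent by applying $(w+1)$-invariance to $(Y\cup\{c\},\,Y'\cup\{c\})$, noting that $c$ is the top of both so the order isomorphism carries $Y$ onto $Y'$. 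You instead form the graph $G_w$ on $[C]^w$ with the shared-non-empty-slot adjacency, observe (correctly) that each edge yields equivalence, and then must prove $G_w$ is connected when $\vert C\vert\ge w+2$ --- a genuine combinatorial verification which you flag as "the only non-routine point" and only sketch. The paper's window-plus-one-element device amounts to exhibiting an explicit spanning tree of your $G_w$ (the reduction edges $X\to Y\subseteq Z$, plus a star through $c$ among the subsets of $Z$), so it sidesteps the general connectivity question entirely and is noticeably cleaner. Your route works, and your edge criterion and the role of $\vert C\vert\ge w+2$ are correctly identified, but you would still have to close the connectivity bookkeeping, whereas the paper's two short claims finish immediately.
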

This is an immediate consequence  of the following lemma:

 \begin{lemma}\label{heredit}
If  $R$ is $r-F-L$-invariant and $r<\vert C\vert$ then $R$ is $r'-F-L$-invariant for all $r'\leq r$. 
\end{lemma}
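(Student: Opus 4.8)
The plan is to prove the lemma by downward induction on $r'$, from $r'=r$ down to $r'=0$, the inductive step passing from $(k+1)$-$F$-$L$-invariance to $k$-$F$-$L$-invariance. During that step one has $k+1\le r<\vert C\vert$, hence $\vert C\vert\ge k+2$, and this bound is exactly what makes the argument work (note that for $k=r-1$ it reduces to the hypothesis $r<\vert C\vert$, and for smaller $k$ it is even weaker).

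The first ingredient is an \emph{extension principle}: if $X\subseteq Y$ and $X'\subseteq Y'$ are finite subsets of $C$, if the order isomorphism $\ell$ of $L_{\restriction X}$ onto $L_{\restriction X'}$ is the restriction of an order isomorphism $\ell_Y$ of $L_{\restriction Y}$ onto $L_{\restriction Y'}$, and if $Y$ and $Y'$ are equivalent, then $X$ and $X'$ are equivalent. Indeed, $\overline{\ell_Y}$ is then an isomorphism of $R_{\restriction F\cup V\times Y}$ onto $R_{\restriction F\cup V\times Y'}$; its restriction to the subset $F\cup V\times X$ of its domain coincides with $\overline{\ell}$ and has image $F\cup V\times X'$, and the restriction of an isomorphism is an isomorphism onto its image, so $\overline{\ell}$ witnesses the equivalence of $X$ and $X'$.

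Now fix $k$ with $0\le k<r$, assume $R$ is $(k+1)$-$F$-$L$-invariant, and let $X,X'\in[C]^{k}$. For $Z=\{z_1<\dots<z_k\}\in[C]^k$ and $0\le i\le k$ call the \emph{$i$-th gap of $Z$} the set of elements of $C$ strictly between $z_i$ and $z_{i+1}$ (with $z_0$ and $z_{k+1}$ read as $-\infty$ and $+\infty$). Say that $X$ and $X'$ are \emph{aligned} when there is an index $i$ such that the $i$-th gap of $X$ and the $i$-th gap of $X'$ are both non-empty. If $X$ and $X'$ are aligned, pick $c$ and $c'$ in these two gaps; then the canonical order isomorphism of $L_{\restriction X\cup\{c\}}$ onto $L_{\restriction X'\cup\{c'\}}$ sends $c$ to $c'$ and extends $\ell$, while $X\cup\{c\}$ and $X'\cup\{c'\}$, being members of $[C]^{k+1}$, are equivalent by $(k+1)$-$F$-$L$-invariance; the extension principle then gives that $X$ and $X'$ are equivalent. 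Since equivalence is an equivalence relation on $[C]^k$, any two members of $[C]^k$ joined by a path of pairwise aligned sets are equivalent. Hence it suffices to prove that, under the hypothesis $\vert C\vert\ge k+2$, the graph on $[C]^k$ whose edges are the aligned pairs is connected; then $[C]^k$ is a single equivalence class, i.e. $R$ is $k$-$F$-$L$-invariant, which closes the induction.

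This connectedness is the heart of the proof and the only delicate point. Since single-element exchanges already connect $[C]^k$, it is enough to join two sets $X=S\cup\{a\}$ and $X'=S\cup\{b\}$ with $\vert S\vert=k-1$ and $a\ne b$. Locating $a$ and $b$ in the gaps of $S$ and re-indexing the gaps of $X$ and of $X'$ after the insertions, one checks by a short case analysis (same gap versus distinct gaps for $a$ and $b$) that $X$ and $X'$ are themselves aligned unless every gap of $S$ other than those containing $a$ and $b$ is empty and, moreover, $a$ and $b$ occupy the extreme positions available to them; but in that exceptional case $\vert C\vert\ge k+2$ forces $S$ together with those two gaps to carry at least three extra points, and a third, suitably placed, point yields a $k$-set aligned with both $X$ and $X'$, producing a path of length two. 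The only real work is this bookkeeping of which gap index survives when a point is inserted or slid; once it is carried out, Lemma \ref{heredit} follows, and with it Lemma \ref{lem:invar}.
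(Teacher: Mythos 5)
Your overall strategy is sound and genuinely different from the paper's. You correctly isolate the same key \emph{extension principle} (restriction of an equivalence is an equivalence) and then run a graph-connectivity argument: declare two $k$-sets \emph{aligned} if they share a nonempty gap at the same index, observe that aligned pairs are equivalent via the $(k+1)$-invariance, and reduce Lemma~\ref{heredit} to the connectedness of the alignment graph on $[C]^k$ when $\vert C\vert\geq k+2$. That reduction is valid, and the connectivity claim appears to be true.

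The paper avoids this combinatorics entirely with a normalization trick. Since $r<\vert C\vert$, one may fix a single reference set $Z\in[C]^r$ together with a point $c\in C$ strictly above all of $Z$. Given $X,X'\in[C]^{r-1}$, extend each to an $r$-set $X_1,X_1'$; by $r$-invariance each of $X_1,X_1'$ is equivalent to $Z$, and the order isomorphism onto $Z$ carries $X$ (resp.\ $X'$) to an equivalent $(r-1)$-subset $Y$ (resp.\ $Y'$) of $Z$. Now $Y\cup\{c\}$ and $Y'\cup\{c\}$ are two $r$-subsets of $C$, hence equivalent; since $c$ is the maximum of both, the canonical order isomorphism between them fixes $c$ and thus restricts to the canonical order isomorphism from $Y$ to $Y'$, giving $Y\sim Y'$ by the extension principle, and so $X\sim Y\sim Y'\sim X'$. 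In short: map everyone into the common frame $Z$, then compare inside $Z$ by appending a shared tail point. No gap bookkeeping, no case split, and the only use of the cardinality hypothesis is the existence of $Z$ and $c$.

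Your argument is correct in principle, but the one substantive step---that the alignment graph is connected when $\vert C\vert\geq k+2$---is only sketched. As you yourself observe, the case analysis (same gap versus different gaps for $a$ and $b$, locating the spare third point, tracking how gap indices shift when an element is inserted) is ``the only real work,'' and it is nontrivial: for instance with $S\cup\{a\}$ and $S\cup\{b\}$, $a<b$ in the same gap and only a single extra point $d$ with $a<d<b$, the two sets are \emph{not} aligned and one really does need the two-step detour through $S\cup\{d\}$. That analysis can be carried through, so this is a different route rather than a wrong one, but the paper's proof shows it can be bypassed altogether.
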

\begin{proof} We only prove that $R$ is $(r-1)-F-L$-invariant. This suffices. Let $X,X'\in [C]^{r-1}$. Since $r<\vert C\vert$, we may select $Z\in [C]^r$ such that the last element of $Z$ (w.r.t. the order $L$) is strictly below some  element $c\in C$. 
\begin{claim} \label{claim:triv1} There are $Y,Y'\in [Z]^{r-1}$ which are equivalent to $X$ and $X'$ respectively.\end{claim}

{\bf Proof of Claim \ref{claim:triv1}.}
Extend $X$ and $X'$ to two $r$-element subsets $X_1$ and $X'_1$ of $C$. Since $R$ is $r-F-L$-invariant, $X_1$ is equivalent to $Z$, hence the unique isomorphism from $L_{\restriction X_1}$ onto $L_{\restriction Z}$ carries $X$ onto an equivalent  subset $Y$ of $Z$. By the same token, $X'_1$ is equivalent to a subset $Y'$ of $Z$. 
\endproof

\begin{claim} \label{claim:triv2}
$Y$ and $Y'$ are equivalent.
\end{claim}

{\bf Proof of Claim \ref{claim:triv2}.} 
The unique isomorphism from $L_{\restriction Y\cup\{c\}}$ onto $L_{\restriction Y'\cup\{c\}}$  carries $Y$ onto $Y$ hence $T$ and $Y'$ are equivalent.
\endproof

From the two claims above 
$X$ and $X'$ are equivalent. Hence, $R$ is $(r-1)-F-L$-invariant. 
\end{proof}

\subsubsection{Coding  by words}
Let $\mathcal A:={\mathfrak  P}(V)\setminus \{\emptyset\}$. 
Let $\mathcal {A}^*: = \bigcup_{p<\omega} \mathcal {A}^{p}$ be the set of finite sequences of members of $\mathcal A$. A finite sequence $u$ being viewed as a word on the 
alphabet $\mathcal A$,  we write it as a juxtaposition of letters and we denote by  $\lambda$ the empty sequence;  the \emph {length} of $u$, denoted by $\vert u\vert $  is the number of its terms.  Let $p$ be a non negative integer. If $X$ is a subset of $p:=\{0,\dots, p-1\}$ and $u$ a word of length $p$, the restriction of $u$ to $X$ induces a word that we denote by $t(u_{\restriction X})$. We suppose that $V$ is finite and  we equip  $\mathcal {A}$ with a linear order. We compare words with the same length with the lexicographical order, denoted by $\leq_{lex}$. We record without proof the following result.
 \begin{lemma}\label{increasing} Let $p,q$ be two non negative integers and $X$ be an $p$-element subset of $p+q:=\{0, \dots, p+q-1\}$. The map from $\mathcal A^p\times \mathcal A^q$ into $\mathcal A^{p+q}$ which associates to every pair $(u,v)\in \mathcal A^p\times \mathcal A^q$  the unique word $w\in \mathcal A^{p+q}$ such that $t(w_{\restriction X})=u$ and $t(w_{\restriction p+q\setminus X})=v$ is strictly increasing (w.r.t. the lexicographical order).
\end{lemma}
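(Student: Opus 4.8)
The plan is a direct ``first difference'' computation, once the order on the domain is pinned down. Each factor $\mathcal A^p$, $\mathcal A^q$ already carries its lexicographic order $\leq_{lex}$, so I read the statement as: the shuffle map is strictly increasing from $\mathcal A^p\times\mathcal A^q$, ordered by the \emph{product} order ($(u,v)\leq(u',v')$ iff $u\leq_{lex}u'$ and $v\leq_{lex}v'$), into $(\mathcal A^{p+q},\leq_{lex})$; that is, if $(u,v)\leq(u',v')$ and $(u,v)\neq(u',v')$ then the associated words satisfy $w<_{lex}w'$. (The domain order must not be read as the lexicographic product comparing $u$ first: already for $p=q=1$ and $X=\{1\}$ that version is false, since then $w$ begins with the single letter of $v$, not of $u$.)

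First I would fix notation. Let $\varphi\colon\{0,\dots,p-1\}\to X$ and $\psi\colon\{0,\dots,q-1\}\to p+q\setminus X$ be the two unique strictly increasing bijections. Then the defining conditions $t(w_{\restriction X})=u$ and $t(w_{\restriction p+q\setminus X})=v$ read, letter by letter, as $w_{\varphi(i)}=u_i$ for $i<p$ and $w_{\psi(j)}=v_j$ for $j<q$. This makes it transparent that $w$ exists and is unique, and that $u$ and $v$ are recovered from $w$, so the map is injective.

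Now take $(u,v)\leq(u',v')$ with $(u,v)\neq(u',v')$, with images $w,w'$. Since $u,v$ (resp.\ $u',v'$) are recovered from $w$ (resp.\ $w'$), we have $w\neq w'$; let $k<p+q$ be the least position at which $w$ and $w'$ differ. Then either $k=\varphi(i)$ for some $i<p$ or $k=\psi(j)$ for some $j<q$. Suppose $k=\varphi(i)$. Since $\varphi$ is strictly increasing, $\varphi(i')<\varphi(i)=k$ for all $i'<i$, so minimality of $k$ gives $u_{i'}=w_{\varphi(i')}=w'_{\varphi(i')}=u'_{i'}$ for all $i'<i$, while $w_k\neq w'_k$ means $u_i\neq u'_i$. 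Hence $i$ is the first coordinate where $u$ and $u'$ disagree, and $u\leq_{lex}u'$ forces $u_i<u'_i$, i.e.\ $w_k<w'_k$. The case $k=\psi(j)$ is identical, with $v\leq_{lex}v'$ in place of $u\leq_{lex}u'$. In either case $w$ and $w'$ agree at every position below $k$ while $w_k<w'_k$, hence $w<_{lex}w'$. (If $p=0$ or $q=0$ there is nothing to prove: then $X$ is empty or all of $p+q$ and the map is the identity.)

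The argument is routine; the step I would be most careful about is the implication ``$k=\varphi(i)\Rightarrow u$ and $u'$ agree on $\{0,\dots,i-1\}$'', which holds precisely because $\varphi$ is order-preserving, so each of those earlier coordinates of $u$ occupies a position of $w$ strictly to the left of $k$ and is hence unaffected by where $w$ and $w'$ first differ.
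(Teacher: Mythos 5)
The paper actually records this lemma \emph{without} proof (``We record without proof the following result''), so there is no paper argument to compare yours against; your proposal is the proof. It is correct, and the ``first difference'' argument is the natural one.

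Your preliminary remark about the order on the domain is the one genuinely delicate point, and you handled it well. The paper never says which order $\mathcal A^p\times\mathcal A^q$ carries, and the lexicographic product (compare $u$ first, then $v$) would make the statement false, as your $p=q=1$, $X=\{1\}$ example shows. Reading it as the product order is both the interpretation that makes the lemma true and the one that matches how the lemma is used later in the proof of (\ref{eq:leading1}): there one of the two coordinates (namely $w(B\setminus F)$) is held fixed and only $w(A)$ varies, and the strictness is used to conclude $w(A)=w(A_0)$ from $w(Q')=w(Q_0)$. Equivalently, one only ever needs strict monotonicity in each coordinate separately, with the other fixed; that special case of your argument is exactly what gets invoked.

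The body of the argument is sound: you correctly parametrise $w$ by $w_{\varphi(i)}=u_i$, $w_{\psi(j)}=v_j$ with $\varphi,\psi$ the increasing bijections onto $X$ and its complement; injectivity of the map is then immediate, and the first index $k$ where $w$ and $w'$ differ lies in exactly one of $X$ or $p+q\setminus X$, with order preservation of $\varphi$ (resp. $\psi$) guaranteeing that the letters to the left of $k$ coming from the same factor also agree. That last point is the crux, and you flagged it explicitly. Nothing is missing.
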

This word $w$ is a \emph{shuffle} of $u$ and $v$ that we denote $u_X\shuffle v$. We denote by $u\widehat\shuffle v$ the largest word of the form $u_X\shuffle v$.

We order  $\mathcal{A}^*$ with the \emph {radix order} defined as follows:
if  $u$ and  $v$ are two distincts words, we set  $u<v$ if and only if either $\vert u \vert <\vert v\vert $ or  $\vert u \vert =\vert v\vert $ et $u<_{lex}v$. 
We suppose that $F$ is finite and we order $\mathfrak {P}(F)$ in such a way that  $X<Y$ implies $\vert X\vert \geq \vert Y\vert$. 
Finally, we order $\mathfrak {P}(F)\times \mathcal{A^*}$ lexicographically. 

Let $L:= (C, \leq)$. Let $Q$ be a finite subset of $E:= F\cup (V\times C)$.  Let $proj(Q):=\{ i \in C: Q\cap V\times \{i\}\not = \emptyset \}$.  Let $i_0,Ê\dots, i_{p-1}$ be an enumeration of $proj(Q)$ in an increasing order (w.r.t $L$) and let  $w(Q\setminus F)$ be the  word $u_{0}\dots u_{p-1}\in \mathcal A^*$ such that $Q\setminus F=u_0\times \{i_0\}\cup \cdots \cup u_{p-1}\times\{i_{p-1}\}$. We set  $\overline w(Q):= (Q\cap F, w(Q\setminus F))$. If $\mathcal Q$ is a subset of $[E]^{<\omega}$,  we set  $\overline w(\mathcal Q):=\{ \overline w(Q): Q\in \mathcal Q\}$. If $f: [E]^m\rightarrow \K$, let  $lead(f):=-\infty$ if $f=0$ and otherwise let $lead (f)$ be the largest element of $\overline w(supp(f))$.
We show below that this latter parameter behaves as the degree of a polynomial. 

We start with an easy fact. 
\begin{lemma}\label{littletrick} Let  $m$ and $n$ be two non negative integers, $A\in [E\setminus F]^m$ and $B\in [E]^n$. If $\vert C\vert \geq m+n$ there is $A'\in [E\setminus F]^m$ such that $proj(A')\cap proj(B)=\emptyset $ and $\overline w(A')=\overline w(A)$.\end{lemma}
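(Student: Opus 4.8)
The plan is to prove Lemma~\ref{littletrick} by exhibiting a concrete ``shifting'' construction on the second coordinate of $E\setminus F = V\times C$. The word $\overline w(A)$ records only the partition $Q\setminus F$ into fibers $u_j\times\{i_j\}$ indexed by the \emph{successive} elements $i_0<\dots<i_{p-1}$ of $proj(A)$, together with the set $A\cap F=\emptyset$ here; crucially, it does not remember \emph{which} elements of $C$ carry these fibers, only their relative order and the letters $u_0,\dots,u_{p-1}$. So two subsets $A,A'\subseteq E\setminus F$ have the same word precisely when there is an order isomorphism from $proj(A)$ onto $proj(A')$ carrying the fiber over $i_j$ to the fiber over $i'_j$ with the same letter. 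Hence the task reduces to: given the finite set $proj(B)\subseteq C$ and the finite subset $proj(A)=\{i_0<\dots<i_{p-1}\}\subseteq C$ with $p=|proj(A)|\le m$ (note $|proj(A)|$ may be strictly less than $m$ since a fiber $u_j$ can have several points of $V$), find $\{i'_0<\dots<i'_{p-1}\}\subseteq C\setminus proj(B)$ of the same cardinality, and set $A':=\bigcup_{j<p} u_j\times\{i'_j\}$; then automatically $proj(A')\cap proj(B)=\emptyset$ and $\overline w(A')=(\emptyset, w(A\setminus F))=\overline w(A)$.

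First I would observe that $proj(B)$ has at most $n$ elements and $proj(A)$ has at most $m$ elements, so $|proj(A)|+|proj(B)|\le m+n\le |C|$, whence $|C\setminus proj(B)|\ge |C|-n\ge m\ge p$. Thus $C\setminus proj(B)$ is a subset of the chain $L$ with at least $p$ elements, so it contains a $p$-element subset; pick any such subset and list it in increasing order as $i'_0<\dots<i'_{p-1}$. Then define $A'$ as above. Checking $\overline w(A')=\overline w(A)$ is then immediate from the definition of $w$: the enumeration of $proj(A')$ in increasing $L$-order is $i'_0,\dots,i'_{p-1}$, the fiber over $i'_j$ is $u_j$ by construction, and $A'\cap F=\emptyset=A\cap F$, so the word produced is $u_0\cdots u_{p-1}=w(A\setminus F)$ and the $\mathfrak P(F)$-component is $\emptyset$ in both cases. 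Finally $A'\in[E\setminus F]^m$ because $|A'|=\sum_{j<p}|u_j|=|A|=m$, since the fibers $u_j$ are nonempty and we have only relocated them, not changed their sizes.

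The only point requiring a word of care is the bookkeeping on cardinalities: one must not confuse $m=|A|$ with $p=|proj(A)|$, and one should note that the hypothesis $|C|\ge m+n$ is used in the weak form $|C|\ge |proj(A)|+|proj(B)|$, which is what guarantees room to relocate all fibers of $A$ off $proj(B)$. I expect no real obstacle here: this is a purely combinatorial relocation argument, and the lemma is stated as ``an easy fact'' precisely because $\overline w$ is invariant under order-preserving reindexing of the $C$-coordinate. The proof is therefore just: count, extract a $p$-element subset of $C\setminus proj(B)$, transport the fibers, and verify the two defining clauses of $\overline w$.

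\begin{proof}
Write $proj(A)=\{i_0<\dots<i_{p-1}\}$ with $p:=|proj(A)|\le m$, and let $w(A)=u_0\cdots u_{p-1}$, so that $A=\bigcup_{j<p}u_j\times\{i_j\}$ with each $u_j\in\mathcal A$ nonempty and $\sum_{j<p}|u_j|=m$. Since $|proj(B)|\le n$, we have $|C\setminus proj(B)|\ge |C|-n\ge (m+n)-n=m\ge p$, so we may pick a $p$-element subset $\{i'_0<\dots<i'_{p-1}\}$ of $C\setminus proj(B)$. Set $A':=\bigcup_{j<p}u_j\times\{i'_j\}$. Then $A'\subseteq V\times(C\setminus proj(B))\subseteq E\setminus F$, so $proj(A')=\{i'_0,\dots,i'_{p-1}\}$ is disjoint from $proj(B)$, and $|A'|=\sum_{j<p}|u_j|=m$, so $A'\in[E\setminus F]^m$. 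Finally, the increasing enumeration of $proj(A')$ is $i'_0,\dots,i'_{p-1}$, the fiber of $A'$ over $i'_j$ is $u_j$, and $A'\cap F=\emptyset=A\cap F$; hence $\overline w(A')=(\emptyset,u_0\cdots u_{p-1})=\overline w(A)$.
\end{proof}
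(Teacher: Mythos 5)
Your proof is correct, and it is surely the intended argument: the paper states this lemma as "an easy fact" and gives no proof at all, so there is nothing to compare against. The relocation of the fibers of $A$ onto a fresh $p$-element subset of $C\setminus proj(B)$, with the careful distinction between $m=|A|$ and $p=|proj(A)|$, is exactly the right bookkeeping.
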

\begin{lemma}\label{lem:good1}
Let $R$ be an $F-L$-invariant structure on $E$. Let $m$ and $n$ be two non negative integers; let $f: [E]^m\rightarrow \K$, $g:[E]^n\rightarrow \K$ be two non zero members of $\K.\age (R)$. Let $A_0\in supp(f)$, and $B_0\in supp(g)$ such that $\overline w(A_0)=lead(f)$ and $\overline w(B_0)=lead(g)$.
Suppose that $F$ and $V$ are  finite, that $\vert C\vert \leq n+m$ and $supp(f)\cap [E\setminus F]^m\not =\emptyset$. Then: 

\begin{equation}\label{eq:leading0}
supp(f,g)\not = \emptyset.
\end{equation}
 \begin{equation}\label{eq:leading1} (\overline w(A), \overline w(B))=(lead(f), lead(g)).
 \end{equation}
 for all $(A,B)\in supp(f,g)(Q_0)$, where $\overline w(Q_0)=lead(f, g)$ and $lead(f,g)$ is the largest element of $\overline w(\{ A\cup B: (A,B)\in supp(f,g)\})$.
  
\begin{equation}\label{eq:leading2} (f(A), g(B))=(f(A_0),g(B_0))\end{equation} for every $(A,B)\in supp(f,g)(Q_0)$.

\begin{equation}\label{eq:leading3} fg(Q_0)=\vert supp(f,g)(Q_0)\vert f(A_0)g(B_0).\end{equation}

 \begin{equation}\label{eq:leading4}lead(fg)=lead(f,g)=(Q_0\cap F, w(A_0)\widehat\shuffle w(B_0\setminus F)).\end{equation}

\end{lemma}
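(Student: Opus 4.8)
The plan is to exploit the fact, flagged in the text just above the statement, that $\overline w$ behaves like the degree of a polynomial, so that $fg$ is governed by a ``leading term'' computable from the shuffle structure of $\mathcal A^*$. Throughout, $R$ is $F-L$-invariant, $F$ and $V$ are finite, and $\vert C\vert=m+n$ (the hypothesis $\vert C\vert\le m+n$ together with the size requirement of Lemma \ref{littletrick}, which the argument uses). I first record three elementary facts. \emph{(i)} Every $f\in\K.\age(R)$ is constant on the fibres of $\overline w$: if $\overline w(A)=\overline w(A')$ then $A\cap F=A'\cap F$ and the order isomorphism $\ell$ from $proj(A)$ onto $proj(A')$ (equicardinal sets) carries $A$ onto $A'$ through $\overline\ell:=1_F\cup(1_V,\ell)$, which, by $p-F-L$-invariance with $p=\vert proj(A)\vert\le\vert C\vert$, induces an isomorphism $R_{\restriction A}\to R_{\restriction A'}$; hence $f(A)=f(A')$. \emph{(ii)} Since $\emptyset$ is the top of the chosen order on $\mathfrak{P}(F)$ (which, being such that $X<Y\Rightarrow\vert X\vert\ge\vert Y\vert$, is a linear extension of reverse inclusion) and $supp(f)\cap[E\setminus F]^m\ne\emptyset$, the chosen $A_0$ has $A_0\cap F=\emptyset$, so $lead(f)=(\emptyset,u)$ with $u:=w(A_0)$, $p:=\vert u\vert\le m$; write also $F_g:=B_0\cap F$, $v:=w(B_0\setminus F)$, $q:=\vert v\vert\le n$, so $lead(g)=(F_g,v)$. \emph{(iii)} $\overline w$ has finite range, so the maxima below exist; the radix order refines the length order; and, by Lemma \ref{increasing}, for fixed $X$ the map $(x,y)\mapsto x_X\shuffle y$ is strictly increasing, hence injective.

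For (\ref{eq:leading0}): take $A\in supp(f)\cap[E\setminus F]^m$ and any $B\in supp(g)$; by Lemma \ref{littletrick} there is $A'\in[E\setminus F]^m$ with $\overline w(A')=\overline w(A)$ and $proj(A')\cap proj(B)=\emptyset$; then $f(A')=f(A)\ne0$ by \emph{(i)} and $A'\cap B=\emptyset$, so $(A',B)\in supp(f,g)$. Now I compute $lead(f,g)$, the largest element of $\overline w(\{A\cup B:(A,B)\in supp(f,g)\})$, and claim it equals $(F_g,u\,\widehat\shuffle\,v)$. \emph{Upper bound.} Let $(A,B)\in supp(f,g)$. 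If $A\cap F\ne\emptyset$ or $B\cap F\ne F_g$, one checks $(A\cup B)\cap F<F_g$ in $\mathfrak{P}(F)$, hence $\overline w(A\cup B)<(F_g,u\,\widehat\shuffle\,v)$. Otherwise $A\cap F=\emptyset$, $B\cap F=F_g$; from $\overline w(A)\le lead(f)$, $\overline w(B)\le lead(g)$ we get $\vert w(A\setminus F)\vert\le p$, $\vert w(B\setminus F)\vert\le q$, so $\vert w((A\cup B)\setminus F)\vert\le p+q$. If it is $<$, then $\overline w(A\cup B)$ is a shorter word, hence smaller; if it equals $p+q$, then $proj(A)$ and $proj(B)$ are disjoint, $w(A\setminus F)\le_{lex}u$, $w(B\setminus F)\le_{lex}v$, and $w((A\cup B)\setminus F)=w(A\setminus F)_X\shuffle w(B\setminus F)\le_{lex}u_X\shuffle v\le_{lex}u\,\widehat\shuffle\,v$ by monotonicity of the shuffle and the definition of $\widehat\shuffle$. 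In every case $\overline w(A\cup B)\le(F_g,u\,\widehat\shuffle\,v)$.

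\emph{Lower bound, and $Q_0$.} Write $u\,\widehat\shuffle\,v=u_{X^*}\shuffle v$; pick $p+q$ columns of $C$ split according to $X^*$ and set $A'':=\bigcup_ju_j\times\{\cdot\}$, $B'':=F_g\cup\bigcup_jv_j\times\{\cdot\}$ on them. Then $\vert A''\vert=m$, $\vert B''\vert=n$, $A''\cap B''=\emptyset$, $\overline w(A'')=lead(f)$, $\overline w(B'')=lead(g)$, whence $f(A'')=f(A_0)\ne0$ and $g(B'')=g(B_0)\ne0$ by \emph{(i)}; so $(A'',B'')\in supp(f,g)$ and $\overline w(A''\cup B'')=(F_g,u\,\widehat\shuffle\,v)$. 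Thus $lead(f,g)=(F_g,u\,\widehat\shuffle\,v)$; put $Q_0:=A''\cup B''$ (any maximiser serves, and each satisfies $Q_0\cap F=F_g$). For (\ref{eq:leading1}): if $(A,B)\in supp(f,g)(Q_0)$ then $A\cup B=Q_0$ since $\vert Q_0\vert=m+n$, so $\overline w(A\cup B)=(F_g,u\,\widehat\shuffle\,v)$; the case analysis then forces $A\cap F=\emptyset$, $B\cap F=F_g$, length $p+q$, and equality all along $w(A\setminus F)_X\shuffle w(B\setminus F)\le u_X\shuffle v\le u_{X^*}\shuffle v$, so by injectivity of the fixed-$X$ shuffle $w(A\setminus F)=u$ and $w(B\setminus F)=v$, i.e. $\overline w(A)=lead(f)$ and $\overline w(B)=lead(g)$.

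The remaining conclusions are formal. Equation (\ref{eq:leading2}) is (\ref{eq:leading1}) together with \emph{(i)}. For (\ref{eq:leading3}): in $fg(Q_0)=\sum_{P\in[Q_0]^m}f(P)g(Q_0\setminus P)$ the nonzero terms are precisely those with $(P,Q_0\setminus P)\in supp(f,g)(Q_0)$, each equal to $f(A_0)g(B_0)$ by (\ref{eq:leading2}); hence $fg(Q_0)=\vert supp(f,g)(Q_0)\vert\,f(A_0)g(B_0)$, which is nonzero because $(A'',B'')\in supp(f,g)(Q_0)$, $f(A_0)g(B_0)\ne0$, and $\K$ has characteristic zero. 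For (\ref{eq:leading4}): $Q_0\in supp(fg)$ gives $lead(f,g)=\overline w(Q_0)\le lead(fg)$, while any $Q\in supp(fg)$ has $f(P)g(Q\setminus P)\ne0$ for some $P$, so $(P,Q\setminus P)\in supp(f,g)$ and $\overline w(Q)\le lead(f,g)$; thus $lead(fg)=lead(f,g)=(Q_0\cap F,w(A_0)\,\widehat\shuffle\,w(B_0\setminus F))$. The main obstacle is the upper-bound case analysis for $lead(f,g)$ and the interlocked equality case yielding (\ref{eq:leading1}): there the reverse-inclusion order on $\mathfrak{P}(F)$, the radix/length order, and the strict monotonicity (hence injectivity) of the shuffle must be braided together, and there $F-L$-invariance, finiteness of $F$ and $V$, the size of $C$, and characteristic zero are all used.
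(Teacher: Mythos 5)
Your proof is correct and follows essentially the same approach as the paper: Lemma \ref{littletrick}, $F$-$L$-invariance of $R$, the chosen orders on $\mathfrak P(F)$ and $\mathcal A^*$, and the strict monotonicity of the fixed-pattern shuffle (Lemma \ref{increasing}) are braided together in the same way to control the leading term of $fg$. The only organizational difference is that you pin down $lead(f,g)=(F_g,u\,\widehat\shuffle\,v)$ up front via explicit upper and lower bounds and then read (\ref{eq:leading1})--(\ref{eq:leading4}) off the equality case, whereas the paper proves (\ref{eq:leading1}) directly through a sequence of claims about an arbitrary $(A,B)\in supp(f,g)(Q_0)$ and obtains the formula for $lead(f,g)$ only at the end; your rearrangement has the small advantage of making the equality $|w(A)|=|w(A_0)|$ explicit, which the paper invokes but leaves only implicitly justified.
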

\begin{proof}
\begin{enumerate}
\item {\bf Proof of (\ref{eq:leading0}).}
Since $supp(f)\cap [E\setminus F]^m\not =\emptyset$ and the order on $\mathfrak P(F)$ decreases with the size, $A_0$ is disjoint from $F$. Let $B\in supp(g)$. According to  Lemma \ref{littletrick} there is $A'$ such that $proj(A')\cap proj(B)=\emptyset $ and $\overline w(A')=\overline w(A_0)$. We have $A'\cap B=\emptyset$ and, since $f\in \K.\age(R)$ and $R$ is $F-L$-invariant, $f(A')=f(A_0)$. Thus  $(A', B)\in supp(f,g)$.\endproof

\item {\bf Proof of (\ref{eq:leading1}).}
Let $(A,B)\in supp(f,g)(Q_0)$. Since $A\in supp(f)$ and $B\in supp(g)$, we have trivially:
\begin{equation}\label{eq:leading5} \overline w(A)\leq lead(f)\; \text{and} \; \overline w(B)\leq lead(g).
\end{equation}

\begin{claim} $B\cap F=Q_0 \cap F$ and  $A\cap F=\emptyset$. 
\end{claim}  The pair $(A', B)$ obtained in the  proof of (\ref{eq:leading0}) belongs to  $supp(f,g)$. Let $Q':= A'\cup B$. By maximality of $\overline w(Q_0)$, we have $\overline w(Q')\leq \overline w(Q_0)$. If $B\cap F\not =Q_0 \cap F$, then $\vert Q'\cap F\vert <\vert Q_0\cap F\vert $, hence $\overline w(Q_0)<\overline w(Q')$. A contradiction. The fact that $A\cap F=\emptyset$ follows.\endproof

\begin{claim}
$proj(A)\cap proj(B)=\emptyset$ and $\vert proj(A)\vert =\vert proj(B).\vert $\end{claim}

Apply Lemma \ref{littletrick}. Let  $A'$ such that $proj(A')\cap proj(B)=\emptyset $ and $\overline w(A')=\overline w(A)$. Since $f\in \K.\age(R)$ and $R$ is $F-L$-invariant, $f(A')=f(A)$ thus  $(A', B)\in supp(f,g)$. Set $Q':= A'\cup B$. We have $w(Q'\setminus F)\leq w(Q_0\setminus F)$ hence $\vert w(Q')\vert \leq \vert w(Q_0)\vert$. Since $\vert w(Q'\setminus F)\vert=\vert proj(A')\vert +\vert proj(B)\vert$ and $\vert w(Q_0\setminus F)\vert \leq \vert proj(A)\vert+\vert proj(B)\vert $, we get $\vert w(Q_0\setminus F)\vert =\vert proj(A)\vert+\vert proj(B)\vert $. This  proves our claim.\endproof

Let $i_0,\dots, i_{r-1}$ be an enumeration of  $proj (Q_0\setminus F)$ in an increasing order. Let $X:= \{j\in r: i_j\in proj(A)\}$. Since $proj(A)\cap proj(B)=\emptyset$, we have $w(Q_0\setminus F)=w(A)_X\shuffle w (B\setminus F)$. Since $w(A)\leq w(A_0)$ and $\vert w(A)\vert = \vert w(A_0)\vert$, Lemma \ref{increasing} yields $w(Q_0\setminus F)\leq w(A_0)_X\shuffle w(B)$. As it is easy to see, there is $A'_0$ such that $\overline w(A'_0)=\overline w(A_0)$ and $Q':= A'_0\cup B$ satisfies $w(Q'\setminus F)= w(A_0)_X\shuffle w(B)$. Since $(A', B)\in supp(f,g)$, we have $w(Q')=w(Q_0)$ by maximality of $w(Q_0)$. With Lemma \ref{increasing} again,  this yields $w(A)=w(A_0)$. Hence  $\overline w(A)=\overline w(A_0)$. A similar argument yields $w(B\setminus F)= w(B_0\setminus F)$ and also $\overline w(B\setminus F)= \overline w(B_0\setminus F)$.\endproof

 \item {\bf Proof of (\ref{eq:leading2}).} Since $R$ is $F-L$-invariant, from $\overline w(A)=lead (f):=\overline w(A_0)$ we get $f(A)=f(A_0)$. By the same token, we get $g(B)=g(B_0)$.\endproof 
\item {\bf Proof of (\ref{eq:leading3}).}
Since $fg(Q_0)=\sum_{(A,B)\in sup(f,g)}f(A)g(B)$ the result follows from (\ref{eq:leading2}).\endproof
\item {\bf Proof of (\ref{eq:leading4}).}
 From (\ref{eq:leading3}),  $fg(Q_0)\not =0$, the equality $lead(fg)=lead(f,g)$ follows. The remaining equality follows from (\ref{eq:leading1}).\endproof
\end{enumerate}
With this, the proof of the lemma is complete.

\end{proof}

As far as invariant structures are concerned, we can retain this:
\begin{corollary} Under the hypotheses of Lemma \ref{lem:good},  $fg\not= 0$.
\end{corollary}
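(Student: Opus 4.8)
The plan is to read the conclusion straight off the preceding Lemma~\ref{lem:good1}, whose proof has already done all the work. First I would fix $A_0\in supp(f)$ and $B_0\in supp(g)$ with $\overline w(A_0)=lead(f)$ and $\overline w(B_0)=lead(g)$; these exist because $f$ and $g$ are non-zero. By~(\ref{eq:leading0}) the set $supp(f,g)$ is non-empty, so among the pairs $(A,B)\in supp(f,g)$ I would choose one for which $\overline w(A\cup B)$ is largest and set $Q_0:=A\cup B$. Since $A\cap B=\emptyset$, $\vert A\vert=m$ and $\vert B\vert=n$, the set $Q_0$ has exactly $m+n$ elements and $\overline w(Q_0)=lead(f,g)$; moreover $(A,B)\in supp(f,g)(Q_0)$, so $supp(f,g)(Q_0)\neq\emptyset$.

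With this $Q_0$ in hand, equation~(\ref{eq:leading3}) of Lemma~\ref{lem:good1} gives
\[
fg(Q_0)=\vert supp(f,g)(Q_0)\vert\; f(A_0)\, g(B_0).
\]
Here $f(A_0)$ and $g(B_0)$ are non-zero elements of $\K$ since $A_0\in supp(f)$ and $B_0\in supp(g)$, and $\vert supp(f,g)(Q_0)\vert$ is a positive integer, hence a non-zero scalar because $\K$ has characteristic zero. Consequently $fg(Q_0)\neq 0$, and therefore $fg\neq 0$. Equivalently, one may simply quote~(\ref{eq:leading4}): it asserts $lead(fg)=lead(f,g)$, and $lead(f,g)\neq-\infty$ by~(\ref{eq:leading0}), which already forces $fg\neq0$.

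I expect no real obstacle here: the entire content sits inside Lemma~\ref{lem:good1}. The only things to keep an eye on are that the standing hypotheses of that lemma ($R$ being $F$--$L$-invariant, $F$ and $V$ finite, $\vert C\vert$ at least $m+n$, $f,g\in\K.\age(R)$ non-zero, and $supp(f)\cap[E\setminus F]^m\neq\emptyset$) are exactly the ones the corollary invokes, and that the positivity of $m$ and $n$, where it is tacitly used, follows from $f,g\neq0$ together with $\K$ being an integral domain. This corollary is the $F$--$L$-invariant building block to which the general Ramsey argument in the proof of Theorem~\ref{main} will be reduced.
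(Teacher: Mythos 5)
Your reading is the right one, and the argument is sound. The paper states this corollary without proof, presenting it as an immediate consequence of the preceding Lemma~\ref{lem:good1} (the reference to ``Lemma~\ref{lem:good}'' in the corollary is evidently a slip for~\ref{lem:good1}, since that is the lemma whose hypotheses are in force at that point and whose equations you are invoking). Your chain of deductions --- use~(\ref{eq:leading0}) to get $supp(f,g)\neq\emptyset$, take a pair maximizing $\overline w(A\cup B)$ to produce a $Q_0$ with $\overline w(Q_0)=lead(f,g)$ and $supp(f,g)(Q_0)\neq\emptyset$, then read off from~(\ref{eq:leading3}) that $fg(Q_0)$ is a positive integer multiple of $f(A_0)g(B_0)$, which is non-zero in a field of characteristic zero --- is exactly the intended ``immediate consequence,'' and your alternative route via~(\ref{eq:leading4}) is equally valid. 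This matches the paper's approach.
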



\subsubsection{An application}
Let $m, n$ be two positive integers, $E$ be a set and
$f:[E]^m\rightarrow
\K$,  $g:[E]^n\rightarrow
\K$. Let  $R:= (E, (\rho_{(i,j)})_{(i,j)\in \T^{*}\times2)})$ be the relational structure  made of the four $m$-ary relations $\rho_{(i,0)}:=\{(x_1, \dots, x_m):\chi\circ f(\{x_1,\dots, x_m\})=i\}$ and the four  $n$-ary relations $\rho_{(i,1)}:=\{(x_1, \dots, x_n): \chi\circ g(\{x_1,\dots, x_n\})=i\}$.  A map $h$ from a subset $A$ of $E$ onto a subset $A'$ of $E$ is a
\emph{local isomorphism}  of $R$ if $\chi\circ f (P)=\chi\circ f (h[P])$ and $\chi\circ f (R)=\chi\circ f (h[R])$ for every $P\in [A]^m$, every $R\in [A]^n$. This fact allows us to consider  the pair $\mathcal H:= (E, (\chi\circ f, \chi\circ g))$ as  a relational structure.
 In the sequel we suppose that  $E=F\cup (V\times C)$ with $F$ and $V$ finite; we fix a chain $L:= (C, \leq)$. 

 \begin{lemma}\label{lem:good}
Suppose that there are $P\in supp(f)\cap [V\times C]^m$ and $R\in supp(g)\cap [E\setminus P]^n$. If $\mathcal H$ is $F-L$-invariant and $\vert C\vert \geq m+n$. Then $fg\not =0$. 
\end{lemma}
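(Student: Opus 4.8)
The plan is to reduce Lemma \ref{lem:good} to the leading-term computation already performed in Lemma \ref{lem:good1}. The subtlety is that $f$ and $g$ themselves need not be $\mathcal H$-invariant --- only $\chi\circ f$ and $\chi\circ g$ are, since these are the defining relations of $\mathcal H$ --- so I would run the combinatorial part of the argument on $\mathcal H$-invariant surrogates and reintroduce the scalar values only at the very end. First, note that $F$-$L$-invariance of $\mathcal H$ means precisely that any two finite subsets of $E$ with the same value of the word map $\overline w$ induce isomorphic restrictions of $\mathcal H$ (the canonical order isomorphism between the relevant subchains, extended by the identity on $F$ and on $V$, is such an isomorphism). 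Hence $\chi\circ f$ depends on $A\in[E]^m$ only through $\overline w(A)$, and likewise $\chi\circ g$; in particular $supp(f)$ and $supp(g)$ are unions of $\overline w$-classes. Therefore the maps $\hat f,\hat g$ taking the value $1$ on $supp(f)$, resp.\ $supp(g)$, and $0$ elsewhere, are non-zero members of $\K.\age(\mathcal H)$ ($\hat f\neq 0$ because $P\in supp(f)$, $\hat g\neq 0$ because $R\in supp(g)$). Since $P\in[V\times C]^m=[E\setminus F]^m$ we have $supp(\hat f)\cap[E\setminus F]^m\neq\emptyset$; replacing $C$ if necessary by an $(m+n)$-element subchain containing $proj(P\cup R)$ (which leaves $\mathcal H$ $F$-$L$-invariant and keeps $\hat f,\hat g$ non-zero), the hypotheses of Lemma \ref{lem:good1} are met for the pair $\hat f,\hat g$.

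Applying Lemma \ref{lem:good1}, Equations (\ref{eq:leading0}) and (\ref{eq:leading1}) yield a set $Q_0\in[E]^{m+n}$ with $\overline w(Q_0)=lead(\hat f,\hat g)$ and $supp(\hat f,\hat g)(Q_0)\neq\emptyset$, such that $\overline w(A)=lead(\hat f)$ and $\overline w(B)=lead(\hat g)$ for every $(A,B)\in supp(\hat f,\hat g)(Q_0)$. Since $supp(\hat f)=supp(f)$ and $supp(\hat g)=supp(g)$, we have $supp(\hat f,\hat g)=supp(f,g)$ and $lead(\hat f)=lead(f)$, $lead(\hat g)=lead(g)$, so the same $Q_0$ works for $f,g$: fixing some $(A_0,B_0)\in supp(f,g)(Q_0)$, every $(A,B)\in supp(f,g)(Q_0)$ satisfies $\overline w(A)=\overline w(A_0)$ and $\overline w(B)=\overline w(B_0)$. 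By the first paragraph, $f(A)$ then lies in the single block $D:=\chi(f(A_0))$, which belongs to $\T^*$ because $A_0\in supp(f)$, and $g(B)$ lies in the single block $D':=\chi(g(B_0))\in\T^*$. Finally, by Equation (\ref{eq:setproduct}) and $|Q_0|=m+n$, $fg(Q_0)=\sum_{(A,B)\in supp(f,g)(Q_0)}f(A)g(B)$, a non-empty sum of products $\alpha\beta$ with $\alpha\in D$ and $\beta\in D'$; Lemma \ref{lem:field} gives $fg(Q_0)\in\K^*$, hence $fg\neq 0$.

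The place I expect to need most care is exactly this separation between the combinatorial and the scalar levels: the shuffle/lexicographic analysis of Lemma \ref{lem:good1} has to be carried out on genuinely $\mathcal H$-invariant data --- equivalently, using only that $supp(f)$ and $supp(g)$ are $\overline w$-saturated, not that $f$ and $g$ are constant on $\overline w$-classes --- while the actual field values of $f$ and $g$ enter only on the last line, where Lemma \ref{lem:field} is precisely what forbids cancellation in a sum of products each having one factor in $D$ and one in $D'$. A minor technical point is the size hypothesis on $C$ in Lemma \ref{lem:good1}, which the subchain reduction in the first paragraph takes care of.
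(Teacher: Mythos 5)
Your proof is correct and follows essentially the same route as the paper's: both replace $f,g$ by $\mathcal H$-invariant surrogates (you use the indicators of $supp(f),supp(g)$; the paper uses $s\circ\chi\circ f$ and $s\circ\chi\circ g$ for a section $s$ of $\chi$), invoke Lemma~\ref{lem:good1} to get constancy of the colour pair $(\chi\circ f(A),\chi\circ g(B))$ over $supp(f,g)(Q_0)$, and then conclude with Lemma~\ref{lem:field}. The only minor differences are cosmetic --- you read off constancy from Equation~(\ref{eq:leading1}) plus $\overline w$-saturation of the supports rather than directly from Equation~(\ref{eq:leading2}), and you add a harmless subchain reduction to address the (apparent typo) size hypothesis in Lemma~\ref{lem:good1}.
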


\begin{proof}  Let $lead(f,g)$ be the largest element of $\overline w(\{ A\cup B: (A,B)\in supp(f,g)\})$ and let $Q_0$ such that  $\overline w(Q_0)=lead(f, g)$. 
\begin{claim} \label {claim:key}$(\chi\circ f(A), \chi\circ g(B))$ is constant for $(A,B)\in sup(f,g)(Q_0)$. 
\end{claim}
{\bf Proof of Claim \ref{claim:key}}. Let $s: \T \rightarrow \K$ be a section of $\chi$. Let $f':=s\circ \chi\circ f$  and let $g':=s\circ \chi\circ g$. Then $f',g'\in \K.\age (\mathcal H)$ and  $supp(f',g')=supp (f,g)$.
According to Equation (\ref{eq:leading2}) of Lemma \ref{lem:good1},  $(f'(A), g'(B))$ is constant for $(A,B)\in supp(f',g')(Q_0)$. The result follows.\endproof

 From  Lemma \ref{lem:field}, $fg(Q_0):=\sum_{(A,B)\in sup(f,g)(Q_0)}f(A)g(B)\not =0$.
\end{proof}

 We recall the finite version of the  theorem of Ramsey \cite{ramsey}, \cite{graham}.
 \begin{theorem}\label{ramsey}For every integers $r,k,l$ there is an integer $R$ such that  for every partition of the $r$-element subsets of a $R$-element set $C$  
into $k$ colors there is  a
$l$-element subset $C'$ of $C$  whose all $r$-element subsets have the same color. 

\end{theorem}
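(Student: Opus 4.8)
The plan is to prove Theorem \ref{ramsey} by induction on the uniformity $r$, with the pigeonhole principle as the base case and a ``stepping up'' extraction argument for the inductive step. For $r=1$ a $k$-colouring of the one-element subsets of $C$ is simply a $k$-colouring of the points of $C$, so by pigeonhole any set with at least $k(l-1)+1$ points contains a monochromatic $l$-element subset; thus $R:=k(l-1)+1$ works.

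For the inductive step, assume the statement holds for uniformity $r-1$ (for every number of colours and every target size), and write $R_{r-1}(k,l)$ for a value that works there. Fix $k$ and $l$ and set $N:=k(l-1)+1$. From a set $C$ that is large enough (size specified below) I would extract points $x_1,\dots,x_N$ together with colours $c_1,\dots,c_N\in\{1,\dots,k\}$ as follows. Put $C_0:=C$; having chosen $x_1,\dots,x_{j-1}$ and a still-large remainder $C_{j-1}$, pick any $x_j\in C_{j-1}$ and colour each $(r-1)$-element subset $S$ of $C_{j-1}\setminus\{x_j\}$ by the given colour of $\{x_j\}\cup S$. This is a $k$-colouring of the $(r-1)$-subsets of $C_{j-1}\setminus\{x_j\}$, so, provided $\vert C_{j-1}\vert-1$ is at least the appropriate $(r-1)$-Ramsey number, the induction hypothesis produces a subset $C_j\subseteq C_{j-1}\setminus\{x_j\}$ of a prescribed size, monochromatic for this colouring with some colour $c_j$. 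To make this run $N$ times one fixes the prescribed sizes by downward recursion: $m_{N-1}:=1$, $m_{j-1}:=R_{r-1}(k,m_j)+1$, and finally $R:=m_0$.

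By construction, for any indices $i_1<i_2<\dots<i_r$ the colour of $\{x_{i_1},\dots,x_{i_r}\}$ equals $c_{i_1}$, since at the moment $x_{i_1}$ was chosen the points $x_{i_2},\dots,x_{i_r}$ all lay in $C_{i_1}$. Applying pigeonhole to the sequence $c_1,\dots,c_N$ with $N=k(l-1)+1$, some colour $c$ occurs at a set of indices $i_1<i_2<\dots<i_l$ of size $l$, and then every $r$-subset of $\{x_{i_1},\dots,x_{i_l}\}$ has colour $c$; this monochromatic $l$-set completes the induction.

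The only real subtlety is the size bookkeeping needed so that the extraction survives all $N$ rounds: this is exactly what forces the (tower-type) growth of $R$ and must be arranged by the downward recursion \emph{before} the construction starts. Alternatively one may avoid explicit bounds altogether and deduce the finite statement from the infinite Ramsey theorem by a compactness (K\"onig's lemma) argument on the tree of partial $k$-colourings; but that route yields no effective estimate, which is why the inductive argument is the appropriate one here, in view of the quantitative bounds on $\tau(m,n)$ discussed above.
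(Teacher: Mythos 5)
The paper does not prove this theorem: it merely recalls the finite Ramsey theorem and cites Ramsey and Graham--Rothschild--Spencer for it. Your proof is the standard inductive ``stepping-up'' argument, i.e.\ the classical proof in precisely those references, so it is consistent in spirit with the paper; it also has the virtue of making explicit the Ramsey numbers $R^r_k(l)$ on which the paper's upper bounds for $\tau(m,n)$ depend, and your closing observation that the compactness derivation from infinite Ramsey would be non-quantitative is exactly the right remark in this context.

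There is, however, one bookkeeping slip to repair. With the downward recursion $m_{N-1}:=1$, $m_{j-1}:=R^{r-1}_k(m_j)+1$, the colour $c_N$ is never actually produced: at the $N$-th step you pick $x_N$ from $C_{N-1}$, but $C_{N-1}\setminus\{x_N\}$ may be empty, and more generally a colour $c_j$ is only well-defined when $\vert C_j\vert\ge r-1$, so that $C_j$ contains at least one $(r-1)$-element subset whose colour can be read off (with $m_{N-1}=1$ and $r\ge 3$, even $c_{N-1}$ is undefined). Yet the pigeonhole step is applied to the full sequence $c_1,\dots,c_N$ of length $k(l-1)+1$, which requires all $N$ colours to exist. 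The fix is routine: start the recursion one index later and one notch larger, say $m_N:=r-1$ and $m_{j-1}:=R^{r-1}_k(m_j)+1$ for $j=N,\dots,1$, and take $R:=m_0$. Then $m_j\ge r-1$ for every $j$, every $C_j$ carries a genuine colour $c_j$, and the rest of your argument goes through verbatim. With that adjustment the proof is complete and correct.
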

The least integer $R$ for which the conclusion of Theorem \ref{ramsey} holds is a Ramsey number that we denote $R_{k}^{r}(l)$.

Let $m$ and $n$ be two non negative integers. Set $r:=Max(\{m,n\})$,   $s:= {mr +n\choose
m}+{mr+n\choose n}$, $k:=5^{s}$ and, for an  integer $l$, $l>r$,  set  $\nu(l):=R_{k}^{r}(l)$.

\begin{lemma} \label{lem:ram}If $\vert F\vert =n$, $\vert V\vert=m$ and  $\vert C\vert \geq \nu(l)$ there is an $l$-element subset $C'$ of $C$ such that $\mathcal H_{\restriction F\cup V\times C'}$ is $F-L_{\restriction C'}$-invariant.
\end{lemma}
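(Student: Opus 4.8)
The plan is to apply the finite Ramsey theorem (Theorem \ref{ramsey}) with the $r$-element subsets of $C$ coloured by the isomorphism type of the corresponding restriction of $\mathcal H$. First I would reduce the goal: since $\mathcal H$ has only relations of arities $m$ and $n$, the maximum arity is $r=Max(\{m,n\})$, and any $C'$ we produce will satisfy $\vert C'\vert = l>r$; hence by Lemma \ref{lem:invar} it suffices to find an $l$-element $C'\subseteq C$ such that $\mathcal H_{\restriction F\cup V\times C'}$ is $r$-$F$-$L_{\restriction C'}$-invariant.

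Next I would define the colouring. Fix the standard $r$-chain on $\{0,\dots,r-1\}$. For $X\in[C]^r$ let $\ell_X$ be the unique order isomorphism from $L_{\restriction X}$ onto $\{0,\dots,r-1\}$ and $\overline\ell_X := 1_F\cup(1_V,\ell_X)$ the induced bijection from $F\cup V\times X$ onto the fixed set $W:=F\cup V\times\{0,\dots,r-1\}$, which has $n+mr$ elements. Colour $X$ by the pair of maps sending each $P\in[W]^m$ to $\chi\circ f(\overline\ell_X^{-1}[P])$ and each $S\in[W]^n$ to $\chi\circ g(\overline\ell_X^{-1}[S])$. Since $\vert\T\vert=5$ (the four blocks of $\K^\ast$ together with $0$), the number of such pairs of maps is at most $5^{\binom{mr+n}{m}}\cdot 5^{\binom{mr+n}{n}}=5^{s}=k$, so we have a colouring of $[C]^r$ into at most $k$ colours. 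By construction, two $r$-subsets $X,X'$ get the same colour if and only if the canonical map $\overline\ell := \overline\ell_{X'}^{-1}\circ\overline\ell_X$ is an isomorphism of $\mathcal H_{\restriction F\cup V\times X}$ onto $\mathcal H_{\restriction F\cup V\times X'}$, i.e. if and only if $X$ and $X'$ are equivalent in the sense of the definition of $r$-$F$-$L$-invariance.

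Finally I would invoke Ramsey: as $\vert C\vert\geq\nu(l)=R_k^r(l)$, there is an $l$-element $C'\subseteq C$ all of whose $r$-element subsets receive the same colour, hence are pairwise equivalent; therefore $\mathcal H_{\restriction F\cup V\times C'}$ is $r$-$F$-$L_{\restriction C'}$-invariant, and by Lemma \ref{lem:invar} it is $F$-$L_{\restriction C'}$-invariant. The only delicate point is the bookkeeping in the definition of the colour: one must check that $c(X)$ genuinely encodes the isomorphism type of $\mathcal H_{\restriction F\cup V\times X}$ relative to the fixed identification with $W$ — in particular that the points of $F$ are handled correctly by taking $\overline\ell_X$ to fix $F$ pointwise — so that "same colour" is exactly "equivalent", and that the count of colours is bounded by $5^{s}=k$; once this is in place the conclusion is immediate from Theorem \ref{ramsey} and Lemma \ref{lem:invar}.
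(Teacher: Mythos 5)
Your proposal is correct and follows essentially the same route as the paper: colour $[C]^r$ by the normalized pair $(\chi\circ f,\chi\circ g)$ on the fixed $(n+mr)$-element set $W$, check that the number of colours is at most $k=5^s$, apply Theorem \ref{ramsey} to extract an $l$-element monochromatic $C'$, and conclude with Lemma \ref{lem:invar}. The paper phrases the colour count simply as "the number of equivalence classes is at most $k$" (and inserts an extra appeal to Lemma \ref{heredit} before Lemma \ref{lem:invar}, which is redundant), whereas you make the normalizing bijection $\overline\ell_X$ and the verification that "same colour" coincides with "equivalent" explicit — a presentational difference only.
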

 The proof is a  basic application of Ramsey theorem. We give it for reader convenience. See \cite {fraissetr} 10.9.4 page 296, or \cite {pouzetri} Lemme IV.3.1.1 for a similar result).

\begin{proof} The number of equivalence classes on $[C]^r$ is at most $k:=5^{s}$ (indeed, this number is bounded by the  number of distinct pairs $(\chi\circ f',\chi\circ g')$ such that  $f'\in \K^{[E']^m}$, $g'\in \K^{[E']^n}$ and $\vert E'\vert=n+mr$). Thus,  according to Theorem \ref{ramsey}, there is a  $l$-element subset $C'$ of $C$  whose all $r$-element subsets are equivalent. This means that $\mathcal H_{\restriction F\cup V\times C'}$ is $r-F-L_{\restriction C'}$-invariant. Now, since $r<l$, Lemma \ref{heredit} asserts that $\mathcal H_{\restriction F\cup V\times C'}$ is $r'-F-L_{\restriction C'}$-invariant for all $r'\leq r$. Since the signature of $\mathcal H$ is bounded by $r$,  $\mathcal H_{\restriction F\cup V\times C'}$ is $F-L_{\restriction C'}$-invariant from Lemma \ref{lem:invar}.
\end{proof}

\subsection{The existence of $\tau(m,n)$}
Let $m$ and $n$ be two non negative integers. Suppose $1\leq m\leq n$ and that $\tau(m-1,n)$ exists. Let $E$ be  a set with at least $m+n$ elements and 
$f:[E]^m\rightarrow
\K$,   
$g:[E]^n\rightarrow
\K$  such that $fg$ is zero, but $f$ and $g$ are not.

\begin{lemma} \label{discharging}Let $A$ be  a transversal for $supp(f)$. Then there is a transversal $B$ for $supp(g)$ such that:
\begin{equation}\label{eq:transv}
\vert B\setminus A\vert \leq \tau(m-1,n)
\end{equation}
\end{lemma}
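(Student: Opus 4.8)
\textbf{Proof plan for Lemma \ref{discharging}.}
The plan is to reduce the statement to an application of the induction hypothesis, namely the existence of $\tau(m-1,n)$. First I would fix a transversal $A$ for $supp(f)$ and look at the ``residual'' parts of $f$ and $g$ that live away from $A$. Concretely, for each point $a\in A$ consider what remains once we forbid $a$; the key device is to pass from the pair $(f,g)$ in degrees $(m,n)$ to a pair in degrees $(m-1,n)$ by contracting one coordinate. A natural way to do this: for $a\in E$ define $\partial_a f:[E\setminus\{a\}]^{m-1}\to\K$ by $\partial_a f(P):=f(P\cup\{a\})$, and keep $g$ restricted to $[E\setminus\{a\}]^n$. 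Because $A$ is a transversal for $supp(f)$, every $P\in supp(f)$ meets $A$, so on the set $E':=E\setminus A$ the only surviving products in $fg(Q)$ for $Q\subseteq E'$ come from $P$'s that are \emph{not} entirely inside $E'$; this is what will let me say that on $E'$ the restricted maps $f_{\restriction [E']^m}$ and $g_{\restriction [E']^n}$ already multiply to zero, and more importantly that a suitable contracted pair does.

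The main step is then: choose $a\in A$ cleverly (or argue one point at a time, removing the contribution of $A$ incrementally) so that the contracted pair $(\partial_a f, g)$, restricted to a large enough ground set inside $E\setminus\{a\}$, is a pair of maps in degrees $(m-1,n)$ whose product vanishes while neither factor vanishes. If that holds, the induction hypothesis gives a transversal of size at most $\tau(m-1,n)$ for $supp(\partial_a f)\cup supp(g)$ over that ground set; in particular a transversal $B_0$ of size $\le\tau(m-1,n)$ for $supp(g)$ restricted there. Then $B:=A\cup B_0$ is a transversal for $supp(g)$ on all of $E$: indeed any $R\in supp(g)$ either meets $A$, or is disjoint from $A$ hence lies in $E\setminus A$ where $B_0$ is a transversal for it (here one uses that $E$ has enough elements so that the ``large enough ground set'' hypothesis for the inductive application can be arranged; the bound $m+n$ on $|E|$ and, if needed, enlarging the ambient set harmlessly, take care of this). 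By construction $B\setminus A\subseteq B_0$, so $|B\setminus A|\le\tau(m-1,n)$, which is exactly \eqref{eq:transv}.

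The step I expect to be the genuine obstacle is establishing that the contracted pair really is a non-trivial zero-divisor pair of the right degrees, i.e.\ that neither $\partial_a f$ nor $g_{\restriction E\setminus\{a\}}$ is zero and that their product is zero. Non-vanishing of $g$ on a sufficiently large subset of $E\setminus\{a\}$ is a routine support-size argument (a non-zero homogeneous $g$ of degree $n$ cannot be supported on sets all meeting a fixed finite set unless $supp(g)$ has finite transversality, in which case we are already done with $B$ finite and contained in that transversal). The delicate point is vanishing of the product after contraction: $fg=0$ only gives $\sum_{P\in[Q]^m}f(P)g(Q\setminus P)=0$, and isolating the terms with $a\in P$ requires controlling the terms with $a\notin P$. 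Here the transversality of $A$ for $supp(f)$ is the lever: summing the identity $fg(Q)=0$ over the appropriate family of $(m+n)$-subsets $Q$ containing $a$, or grouping by $Q\cap A$, should let me cancel the ``$a\notin P$'' contributions (they factor through $f_{\restriction[E\setminus\{a\}]^m}$ and can be handled by treating the points of $A$ one at a time, using that each already kills part of $supp(f)$) and leave precisely $(\partial_a f)\,g=0$ on $[E\setminus\{a\}]^{<\omega}$ in degree $(m-1)+n$. Once that algebraic identity is in hand, the induction hypothesis applies verbatim and the rest is bookkeeping.
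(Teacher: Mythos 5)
Your overall plan is right — contract $f$ by one coordinate to get a pair in degrees $(m-1,n)$, apply the induction hypothesis to find a small transversal $H$, and set $B:=A\cup H$ — but the mechanism you sketch for the contraction does not actually close the gap you yourself flag.

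The problem is the non-vanishing of the contracted $f$. If you contract at some $a\in A$ and restrict to the ground set $E\setminus A$, defining $f'(P'):=f(P'\cup\{a\})$ for $P'\in[E\setminus A]^{m-1}$, then $f'$ is identically zero unless some $P\in supp(f)$ has $P\cap A=\{a\}$ exactly. If every member of $supp(f)$ meets $A$ in at least two points (which can certainly happen with $A$ a minimal transversal), then $f'_a=0$ for every choice of $a\in A$, and your reduction produces the trivial pair. None of your suggested variants — ``summing over an appropriate family of $Q$'s,'' ``grouping by $Q\cap A$,'' or ``treating the points of $A$ one at a time'' — addresses this; they bear on the vanishing of the product, which is actually the easy half (if $a\notin P\subseteq Q'\cup\{a\}$ with $Q'\cap A=\emptyset$, then $P\cap A=\emptyset$ and $f(P)=0$ because $A$ is a transversal), not on the non-vanishing of $f'$.

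The paper's fix is a specific device that handles non-vanishing and vanishing simultaneously. Choose $P_0\in supp(f)$ so that $r_0:=|P_0\cap A|$ is \emph{minimal}, pick $x_0\in F_0:=P_0\cap A$, and take the ground set $E':=(E\setminus A)\cup(F_0\setminus\{x_0\})$ — that is, keep the rest of $P_0$'s trace on $A$ in play. Then $P_0\setminus\{x_0\}\in[E']^{m-1}$ and $f'(P_0\setminus\{x_0\})=f(P_0)\neq 0$, so $f'\neq 0$. And $f'g'=0$ still holds, because for $Q'\in[E']^{m+n-1}$ and $Q:=Q'\cup\{x_0\}$, any $P\subseteq Q$ with $x_0\notin P$ satisfies $P\cap A\subseteq F_0\setminus\{x_0\}$, hence $|P\cap A|<r_0$, hence $f(P)=0$ by \emph{minimality} of $r_0$; the surviving terms are exactly $f'g'(Q')$. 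Minimality of $r_0$ is thus the lever for the product identity, and enlarging the ground set beyond $E\setminus A$ is the lever for non-triviality; neither appears in your sketch.

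One more small discrepancy: your escape clause ``unless $supp(g)$ has finite transversality, in which case we are already done'' does not yield the bound $|B\setminus A|\leq\tau(m-1,n)$, since a finite transversal of $supp(g)$ can be arbitrarily large and need not be related to $A$. The paper's Case 1 is the sharper condition that $A\cup P_0$ itself is a transversal of $supp(g)$; this gives $|B\setminus A|\leq m-1$, and $m-1\leq\tau(m-1,n)$ is then checked separately.
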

\begin{proof}   Among the sets $P\in supp(f)$ select one,  say $P_0$, such that $F_0:= P_0\cap A$ has minimum size, say $r_0$. 

{\bf Case 1.}  $A \cup P_0$ is also a transversal for $supp(g)$.
In  this case,  set $B:= A\cup P_0$.  We have   $\vert B\setminus A\vert \leq m-1$,  hence inequality (\ref{eq:transv}) follows  from inequality (\ref{eq:transv2}) below:
\begin{equation}\label{eq:transv2}
m-1\leq \tau(m-1,n)\end{equation}

This inequality is trivial for $m=1$. Let us prove it for  $m>1$ (a much better inequality  is given in Lemma \ref{lem:minor}). Let $E'$ be an $m+n-1$-element  set, let $f':[E']^{m-1}\rightarrow \K$ and  $g':[E']^{n}\rightarrow \K$, with $f'$ non zero on a single $m-1$-element set $A'$,  $g'(B')=1$ if  $A'\cap B'\not= \emptyset$ and $g'(B')=0$ otherwise. Since $m-1$ and $n$ are not $0$, $f'$ and $g'$ are not $0$. Trivially, $f'g'=0$ and,  as it easy to check, $A'$ is a transversal for $supp(f')\cup supp(g')$ having minimum size, hence   $\tau(supp(f')\cup supp(g'))=m-1$. 

{\bf Case 2.} Case 1 does not hold.  In this case, pick $x_0\in F_0$ and set  $E':= (E\setminus A) \cup (F_0\setminus \{x_0\})$. Let $f':  [E']^{m-1}\rightarrow \K$ be defined by setting $f(P'):= f(P'\cup \{x_0\})$ for all $P'\in [E']^{m-1}$ and let $g':=g_{\restriction [E']^{n}}$.

%
\begin{claim} \label{claim:zero} $\vert E'\vert \geq m+n-1$ and $f'g'=0$.
\end{claim}

\noindent{\bf Proof of Claim \ref{claim:zero}.}
The inequality follows from the fact that $A \cup P_0$ is not a transversal for $supp(g)$. Now, let
 $Q'\in [E']^{m+n-1}$ and $Q:= Q'\cup \{x_0\}$. 

\begin{equation}\label{eq:1}f'g'(Q')= \sum_{P'\in [Q']^{m-1}}  f'(P')g'(Q'\setminus P')= \sum_{x_0\in P\in [Q]^m}  f(P)g(Q\setminus P)\end{equation} Since $fg=0$ we have:
\begin{equation}\label{eq:2} 0=fg(Q)=\sum_{P\in [Q]^{m}}  f(P)g(Q\setminus P)=\sum_{x_0\in P\in [Q]^{m}}  f(P)g(Q\setminus P)+\sum_{x_0\not \in P\in [Q]^{m}}  f(P)g(Q\setminus P)\end{equation}
If $x_0\not \in P$, $\vert P\cap  A\vert <r_0$,  hence $f(P)=0$. This implies that the second term in the last member of (\ref{eq:2}) is zero, hence the second member of (\ref{eq:1}) is zero. This proves our claim.\endproof  

From our hypothesis $A\cup P_{0}$ is not a tranversal for $g$. Hence, we have\begin{claim} \label{claim:bothnonzero}$f'$ and $g'$ are not zero. 
\end{claim}

The existence of $\tau(m-1, n)$ insures that there is a transversal $H$ for $supp(f')\cup supp(g')$ of size at most $\tau(m-1,n)$. The set $B:=A\cup H$ is a transversal for $supp(f)\cup supp(g)$.

\end{proof}

\begin{lemma}\label{lem:bound1}
Let $l$ be  a positive integer. If $\tau(supp(f)\cup supp(g))> n+m(l-1)+ \tau(m-1,n)$ then for every   $F\in supp(g)$ there is a subset  $\mathcal P\subseteq  supp(f)\cap [E\setminus F]^m$ made of  at least $l$ pairwise disjoint sets.
\end{lemma}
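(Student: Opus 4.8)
The statement is a counting/pigeonhole assertion: it lets us trade a large transversality number for a large family of pairwise disjoint members of $supp(f)$ avoiding a prescribed $F\in supp(g)$. First I would fix $F\in supp(g)$ and look at those elements of $supp(f)$ that are disjoint from $F$; call this family $\mathcal S := supp(f)\cap [E\setminus F]^m$. I would build, greedily, a maximal collection $\mathcal P = \{P_1,\dots,P_{l'}\}$ of pairwise disjoint sets inside $\mathcal S$, and aim to show $l'\geq l$. Suppose for contradiction $l' \leq l-1$. Maximality means every member of $\mathcal S$ meets $A_0 := F \cup P_1 \cup \cdots \cup P_{l'}$; equivalently, $A_0$ is a transversal for $\mathcal S = supp(f)\cap[E\setminus F]^m$.

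The next step is to promote $A_0$ into a transversal for all of $supp(f)$. Any $P\in supp(f)$ that is \emph{not} disjoint from $F$ already meets $F\subseteq A_0$; any $P\in supp(f)$ disjoint from $F$ lies in $\mathcal S$ and hence meets $A_0$ by the previous paragraph. So $A_0$ is a transversal for $supp(f)$, of size at most $n + m(l-1)$ (since $\vert F\vert = n$ and $\vert P_i\vert = m$ with $l' \leq l-1$). Now apply Lemma~\ref{discharging}: there is a transversal $B$ for $supp(g)$ with $\vert B\setminus A_0\vert \leq \tau(m-1,n)$. Then $A_0\cup B$ is simultaneously a transversal for $supp(f)$ and for $supp(g)$, hence for $supp(f)\cup supp(g)$, and its size is at most $\vert A_0\vert + \vert B\setminus A_0\vert \leq n + m(l-1) + \tau(m-1,n)$. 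This gives $\tau(supp(f)\cup supp(g)) \leq n + m(l-1) + \tau(m-1,n)$, contradicting the hypothesis. Therefore $l' \geq l$, and restricting $\mathcal P$ to any $l$ of its members finishes the proof.

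\textbf{Where the work is.} There is essentially no computational obstacle here; the only points needing a little care are (i) checking that the greedy maximal disjoint family really witnesses that $A_0$ is a transversal for $supp(f)$ — that is just the definition of maximality, plus the trivial observation that sets meeting $F$ are handled automatically — and (ii) the size bookkeeping $\vert A_0\vert \leq n + m(l-1)$ and $\vert A_0 \cup B\vert \leq \vert A_0\vert + \vert B \setminus A_0\vert$. The substance of the lemma is entirely carried by Lemma~\ref{discharging} (the discharging/induction step), which is invoked as a black box; the present lemma is the bridge that converts its transversal bound into a geometric disjointness statement that the Ramsey argument of Lemma~\ref{lem:ram} can subsequently consume. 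So I expect the proof to be short, the main (minor) subtlety being to state the maximality argument cleanly rather than to overload the reader with the trivial case split on whether $P$ meets $F$.
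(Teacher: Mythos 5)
Your proof is correct and follows essentially the same route as the paper: fix $F$, take a maximum family $\mathcal P$ of pairwise disjoint members of $supp(f)\cap[E\setminus F]^m$, observe that $A_0=F\cup\bigcup\mathcal P$ is then a transversal of $supp(f)$, feed $A_0$ into Lemma~\ref{discharging}, and derive the size contradiction. The only difference is cosmetic: you spell out the case split (sets meeting $F$ versus sets in $\mathcal S$) that the paper dismisses with ``Clearly,'' and you make explicit the transversal $A_0\cup B$ that the paper leaves implicit when it bounds $\tau(supp(f)\cup supp(g))$.
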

\begin{proof}
Fix $F\in supp(g)$.  Let  $\mathcal P\subseteq  supp(f)\cap [E\setminus F]^m$ be a finite  subset  made of   pairwise disjoint sets and let $p:= \vert \mathcal P\vert$. If the conclusion of the lemma does not hold, we have $p<l$.  Select then $\mathcal P$ with maximum size and set $A:=F\cup \bigcup \mathcal P$.  Clearly  $A$ is a transversal   for $supp(f)$.  According to Lemma \ref{discharging} above,  $\tau(supp(f)\cup supp(g))\leq \vert A\vert +\tau(m-1, n)=n+mp+ \tau(m-1,n)$. Thus, according to our hypothese, $l\leq p$. A contradiction.  \end{proof}

Let $\varphi(m,n):=n+ m(\nu(n+m)-1)+\tau(m-1,n)$. 

\begin{lemma}\label{lem:tau}
\begin{equation}\label{eq:tau}
\tau(m,n)\leq \varphi (m, m)
\end{equation}
\end{lemma}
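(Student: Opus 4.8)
The plan is to argue by contradiction, feeding a hypothetical counterexample through Lemmas \ref{lem:bound1}, \ref{lem:ram} and \ref{lem:good} in that order; throughout I keep the standing hypotheses of this subsection, namely $1\le m\le n$ and the existence of $\tau(m-1,n)$. Suppose then that for some set $E$ with at least $m+n$ elements and some field $\K$ of characteristic zero there are non-zero maps $f:[E]^m\to\K$ and $g:[E]^n\to\K$ with $fg=0$ and $\tau(supp(f)\cup supp(g))>\varphi(m,n)=n+m(\nu(n+m)-1)+\tau(m-1,n)$. Since $m\ge 1$ we have $n+m>n=\max\{m,n\}$, so $\nu(n+m)$ is defined, and Lemma \ref{lem:bound1} applied with $l:=\nu(n+m)$ gives, for each $F\in supp(g)$, a family $\mathcal P\subseteq supp(f)\cap[E\setminus F]^m$ consisting of at least $\nu(n+m)$ pairwise disjoint $m$-element sets.

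Next I would set up the invariant-structure picture. Fix one $F\in supp(g)$, necessarily an $n$-element set, choose inside the family above pairwise disjoint $m$-element sets $(P_i)_{i\in C}$ with $|C|=\nu(n+m)$, each $P_i$ disjoint from $F$, put $V:=\{0,\dots,m-1\}$ and let $L:=(C,\le)$ for some linear order on $C$, and identify the finite set $F\cup\bigcup_{i\in C}P_i\subseteq E$ with $F\cup(V\times C)$ by a bijection fixing $F$ pointwise and carrying $P_i$ onto $V\times\{i\}$. Transporting $f$ and $g$ through this bijection (and restricting to this finite set) we may assume $E=F\cup(V\times C)$ with $|F|=n$, $|V|=m$, while $V\times\{i\}\in supp(f)$ for all $i\in C$, $F\in supp(g)$, and still $fg=0$ — the latter because $fg$ is homogeneous of degree $m+n$ and its value on an $(m+n)$-subset $Q$ involves only the values of $f$ and $g$ on subsets of $Q$. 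Now apply Lemma \ref{lem:ram} with $l:=n+m$, legitimate since $|C|=\nu(n+m)$: there is an $(n+m)$-element $C'\subseteq C$ with $\mathcal H_{\restriction F\cup V\times C'}$ being $F$-$L_{\restriction C'}$-invariant, where $\mathcal H=(E,(\chi\circ f,\chi\circ g))$; this invariance passes to the restriction since $\chi\circ f$ and $\chi\circ g$ are determined by their values on subsets of the current domain.

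The final step is to apply Lemma \ref{lem:good} to $f$ and $g$ restricted to $E'':=F\cup(V\times C')$. Its hypotheses hold: $P:=V\times\{i\}$ for any fixed $i\in C'$ lies in $supp(f)\cap[V\times C']^m$; $R:=F$ lies in $supp(g)\cap[E''\setminus P]^n$, since $F$ is disjoint from every $P_i$; $\mathcal H_{\restriction E''}$ is $F$-$L_{\restriction C'}$-invariant by the previous paragraph; and $|C'|=n+m\ge m+n$. Hence the product of the restrictions of $f$ and $g$ is non-zero on $E''$, i.e. $(fg)(Q)\ne 0$ for some $(m+n)$-subset $Q$ of $E''$, contradicting $fg=0$. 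So no such pair $f,g$ exists, whence $\tau(supp(f)\cup supp(g))\le\varphi(m,n)$ for every admissible pair; since this bound depends on neither $E$ nor $\K$, it follows that $\tau(m,n)$ exists and $\tau(m,n)\le\varphi(m,n)$, completing the induction on $m$.

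I expect the only genuine delicacy to be the bookkeeping that glues the three lemmas together and the identification $\bigcup_i P_i\leftrightarrow V\times C$: one must check that shrinking the ground set from $E$ down to the small set $E''$ preserves non-vanishing of $f$ and of $g$, preserves the existence of a support element of $f$ lying inside $V\times C'$, preserves the disjointness of the chosen pair $(P,R)$, and commutes with forming the degree-$(m+n)$ product — together with the (immediate) remark that $F$-$L$-invariance is inherited under restriction. Each point is routine once the identification is fixed, but they are exactly what makes Lemmas \ref{lem:bound1}, \ref{lem:ram} and \ref{lem:good} compose into the stated bound.
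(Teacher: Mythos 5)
Your proof is correct and follows the same route as the paper: apply Lemma \ref{lem:bound1} with $l=\nu(n+m)$ to extract $\nu(n+m)$ pairwise disjoint members of $supp(f)$ avoiding some $F\in supp(g)$, identify their union with $V\times C$, apply Lemma \ref{lem:ram} to find an $(n+m)$-element $C'$ giving an $F$-$L_{\restriction C'}$-invariant restriction, and conclude with Lemma \ref{lem:good} that $fg\not=0$. The extra bookkeeping you supply about restricting $f$, $g$ and $\mathcal H$ to $F\cup(V\times C')$ merely makes explicit what the paper compresses into its ``with no loss of generality'' step.
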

\begin{proof}
Suppose $\tau(supp(f)\cup supp(g))>\varphi(m,n)$. Let $F\in supp(g)$. According to Lemma \ref{lem:bound1} there is a subset $\mathcal P\subseteq  supp(f)\cap [E\setminus F]^m$ made of  at least $\nu(n+m)$ pairwise disjoint sets. With no loss of generality, we may suppose that $\cup \mathcal P$ is a set of the form $V\times C$ where $\vert V\vert=m$ and $\vert C\vert=\nu(m,n)$. Let $\leq$ be a linear order on $C$ and $L:= (C, \leq)$. According to Lemma \ref{lem:ram} there is an $n+m$-element subset $C'$ of $C$ such that $\mathcal H_{\restriction F\cup V\times C'}$ is $F-L_{\restriction C'}$-invariant. According to Lemma \ref{lem:good} $fg\not =0$. A contradiction. \end{proof}

With Lemma \ref{lem:tau}, the proof of Theorem \ref{main} is complete.

Note that  $\varphi(1,2)=1+R^2_{5^{10}}(3)+\tau(0,n)=R^2_{5^{10}}(3)$, whereas $\tau(1,2)=4$. Also $\varphi (2,2)=2R^2_{5^{30}}(4)+\tau(1,2)= 2(R^2_{5^{30}}(4)+2)$.  

Our original  proof of Theorem \ref{main} was a bit simpler. Instead of an $m$-element set $F$ and several pairwise disjoint $n$-element sets, we considered several pairwise $n+m$-element sets. In the particular case of $m=n=2$,  we got  $\tau (2,2)\leq 4R_{5^{56}}^2(4)+1$. In term of  concrete upper-bounds, we are not convinced that the  improvement worth the effort.

\section{The Gottlieb-Kantor theorem and the case $m=1$ } 
Let $E$ be  a set. To each $x\in E$ associate an  indeterminate $X_{x}$.  Let $\K[E]$ be the algebra over 
 the field $\K$ of polynomials in these indeterminates. Let  $f:[E]^{1} \rightarrow \K $. Let $D_{f}$ be the derivation on this algebra which is induced by $f$, that is  $D_{f}(X_{x}):=f(\{x\})$ for every $x\in E$. Let  $\varphi: \K[E]\rightarrow \K[E]$ be the ring homomorphism such that 
$\varphi_f(1):=1$ and  $\varphi_f(X_{x}):= f(\{x\}) X_{x}$. Let $e: [E]^{1} \rightarrow K$ be the constant map equal to $1$ and 
let $D_{e}$ be the corresponding derivation. For example $D_f(X_xX_yX_z)=f(\{x\})X_yX_z+f(\{y\})X_xX_z+f(\{z\})X_xX_y$ whereas $D_e(X_xX_yX_z)=X_yX_z+X_xX_z+ X_xX_y$. It is easy to check that:
\begin{equation} \label{eq:derivation}D_{e}\circ \varphi_f=\varphi_f\circ D_{f}. 
\end{equation}
Let $n$ be a	non negative integer; let $\K_{[n]}[E]$ be the vector space generated by the monomials made of $n$ distinct variables.
From equation (\ref{eq:derivation}), we deduce:
\begin{corollary} \label{cor:surj}If  $f$ does not take the value zero on $[E]^1$, 
the surjectivity of  the maps from $\K_{[n+1]}[E]$ into $\K_{[n]}[E]$ induced by $D_{f}$  and $D_{e}$  are equivalent. 
\end{corollary}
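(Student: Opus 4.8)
The plan is to exploit Equation~(\ref{eq:derivation}), namely $D_e\circ\varphi_f=\varphi_f\circ D_f$, together with the observation that, under the hypothesis that $f$ never vanishes on $[E]^1$, the ring homomorphism $\varphi_f$ is in fact a bijection of $\K[E]$ onto itself. Indeed $\varphi_f$ sends the generator $X_x$ to $f(\{x\})X_x$, and since each scalar $f(\{x\})$ is a nonzero element of the field $\K$, the homomorphism $\varphi_{f^{-1}}$ defined by $X_x\mapsto f(\{x\})^{-1}X_x$ is a two-sided inverse. Moreover $\varphi_f$ is graded in the strong sense that it multiplies each monomial by a nonzero scalar, hence it preserves the monomial supports; in particular it restricts to a linear automorphism of the subspace $\K_{[n]}[E]$ for every $n$, since that subspace is spanned by monomials in $n$ distinct variables and $\varphi_f$ merely rescales each such monomial.

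The key steps, in order, are: first, record that $\varphi_f\colon\K_{[n]}[E]\to\K_{[n]}[E]$ is a linear isomorphism for each $n$ (likewise for $n+1$), as just explained. Second, rewrite Equation~(\ref{eq:derivation}) restricted to the relevant graded pieces: the map $D_f\colon\K_{[n+1]}[E]\to\K_{[n]}[E]$ satisfies $D_f=\varphi_f^{-1}\circ D_e\circ\varphi_f$, where now $\varphi_f$ on the left denotes its restriction $\K_{[n+1]}[E]\to\K_{[n+1]}[E]$ and $\varphi_f^{-1}$ on the right its restriction $\K_{[n]}[E]\to\K_{[n]}[E]$. Third, conclude: a composite of three linear maps the outer two of which are isomorphisms is surjective if and only if the middle one is; hence $D_f$ is surjective on $\K_{[n+1]}[E]\to\K_{[n]}[E]$ exactly when $D_e$ is. (One should also check that $D_f$ and $D_e$ genuinely map $\K_{[n+1]}[E]$ into $\K_{[n]}[E]$: applying a derivation to a squarefree degree-$(n+1)$ monomial yields a sum of squarefree degree-$n$ monomials, which is immediate from the Leibniz rule and the sample computation displayed just before Equation~(\ref{eq:derivation}).)

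I do not expect a serious obstacle here; the statement is essentially a formal consequence of~(\ref{eq:derivation}) once one notices the invertibility of $\varphi_f$. The only point requiring a little care is the bookkeeping of which graded component each occurrence of $\varphi_f$ acts on, so that the intertwining identity is applied between the correct spaces $\K_{[n+1]}[E]$ and $\K_{[n]}[E]$; this is routine. The corollary then feeds into the treatment of the case $m=1$, where the surjectivity of $D_e$ on these squarefree components is precisely the dual formulation of the Gottlieb--Kantor full-rank statement (Theorem~\ref{kantor}), so that transporting it along $\varphi_f$ yields the weighted version needed for $\tau(1,n)=2n$.
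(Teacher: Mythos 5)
Your proof is correct and is precisely the argument the paper intends when it writes that the corollary is ``deduced'' from Equation~(\ref{eq:derivation}): since $f$ is nowhere zero on $[E]^1$, the ring homomorphism $\varphi_f$ rescales each squarefree monomial by a nonzero scalar, hence restricts to a linear automorphism of each $\K_{[n]}[E]$, and conjugating by these automorphisms via the intertwining relation shows $D_f$ and $D_e$ have the same surjectivity on each graded piece. (One trivial slip: in the display $D_f=\varphi_f^{-1}\circ D_e\circ\varphi_f$ you describe $\varphi_f$ as being ``on the left'' and $\varphi_f^{-1}$ ``on the right,'' but it is the reverse; the restrictions you assign to each are nonetheless the correct ones, so the argument is unaffected.)
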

Suppose that $E$ is finite. In this case,  the matrix of the restriction of $D_e$ to $\K_{[n+1]}[E]$ identifies to   $M_{n,n+1}$. Thus, according to Theorem \ref{kantor}, $D_{e}$ is surjective provided that $\vert E\vert \geq 2n+1$. Corollary \label{cor:surj} asserts that in this case, $D_f$ is surjective too.  This yields:
\begin {lemma} \label{lem:weak}If $f$ does not take the value zero on a subset  $E'$ of $E$ of size at least  $2n+1$ but $f g=0$ for some $g:[E]^n\rightarrow K$  then $g$ is zero on the  $n$-element subsets of $E'$.
\end{lemma}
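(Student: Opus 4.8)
The plan is to convert the hypothesis $fg=0$ into the statement that $g$, viewed as a vector indexed by $[E]^n$, is killed by a weighted incidence matrix $N$, and then to recognize the transpose $N^{\top}$ as the matrix of the derivation $D_f$ restricted to $\K_{[n+1]}[E]$. Corollary \ref{cor:surj} together with Theorem \ref{kantor} will then force $D_f$ to be surjective, hence $N^{\top}$ to have full row rank, hence $N$ to be injective, which is exactly the vanishing of $g$ we are after.

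First I would reduce to the case where $E$ is finite with $|E|=2n+1$. Fix any $P_0\in[E']^n$. Since $|E'|\geq 2n+1$ we may choose $E''\subseteq E'$ with $P_0\subseteq E''$ and $|E''|=2n+1$, and set $f'':=f_{\restriction [E'']^1}$, $g'':=g_{\restriction [E'']^n}$. Because the product of a homogeneous map of degree $1$ by one of degree $n$, evaluated on a set $Q$, involves only subsets of $Q$, we get $f''g''(Q)=fg(Q)=0$ for every $Q\in[E'']^{n+1}$; moreover $f''$ still takes no value $0$ and $|E''|\geq 2n+1$. So it suffices to prove: \emph{if $E$ is finite with $\ell:=|E|\geq 2n+1$, $f:[E]^1\to\K$ is nowhere zero and $fg=0$, then $g$ vanishes on $[E]^n$}; applied to $E''$ this gives $g(P_0)=0$, and $P_0\in[E']^n$ was arbitrary.

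So assume $E$ finite, $\ell\geq 2n+1$, $f$ nowhere zero. For a finite $P\subseteq E$ write $X_P:=\prod_{x\in P}X_x$, and identify $\K^{[E]^k}$ with $\K_{[k]}[E]$ via $h\mapsto\sum_{P\in[E]^k}h(P)X_P$. Let $N$ be the matrix with rows indexed by $Q\in[E]^{n+1}$ and columns by $P\in[E]^n$, defined by $N_{Q,P}:=f(Q\setminus P)$ if $P\subseteq Q$ and $N_{Q,P}:=0$ otherwise. Then $(Ng)_Q=\sum_{P\in[Q]^n}f(Q\setminus P)g(P)=\sum_{x\in Q}f(\{x\})g(Q\setminus\{x\})=fg(Q)$, so $fg=0$ says precisely $Ng=0$. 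On the other hand $D_f(X_Q)=\sum_{x\in Q}f(\{x\})X_{Q\setminus\{x\}}=\sum_{P\in[Q]^n}f(Q\setminus P)X_P$, so the matrix of $D_f:\K_{[n+1]}[E]\to\K_{[n]}[E]$ in the monomial bases is exactly $N^{\top}$.

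Finally I would cite the known facts. The matrix of $D_e:\K_{[n+1]}[E]\to\K_{[n]}[E]$ is $M_{n,n+1}$, which has full row rank over $\Q$ (hence over $\K$) by Theorem \ref{kantor} since $2n+1\leq\ell$; thus $D_e$ is surjective. Since $f$ does not take the value $0$ on $[E]^1$, Corollary \ref{cor:surj} gives that $D_f:\K_{[n+1]}[E]\to\K_{[n]}[E]$ is surjective too, i.e. $N^{\top}$ has full row rank, i.e. $N$ has full column rank, i.e. $\ker N=0$. Hence $Ng=0$ forces $g=0$ on $[E]^n$, completing the argument. The only points requiring care will be purely organizational: verifying that $fg=0$ descends to the restriction $f''g''$, and lining up the derivation/incidence-matrix dictionary so that it is $N^{\top}$ (and not $N$) that realizes $D_f$; once that is done the result is an immediate consequence of Corollary \ref{cor:surj} and Theorem \ref{kantor}.
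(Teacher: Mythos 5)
Your proof is correct and follows essentially the same route as the paper: both reduce the claim to the surjectivity of $D_f$ on $\K_{[n+1]}[E']\to\K_{[n]}[E']$, obtained by combining Corollary \ref{cor:surj} with Theorem \ref{kantor}. The paper phrases the final step in dual form (the linear functional $\overline g$ annihilates the image of the surjective map $D_f$, hence $\overline g=0$), whereas you phrase it via the matrix $N$ whose transpose realizes $D_f$ (full row rank of $N^{\top}$ gives $\ker N=0$) and add an explicit reduction to a $(2n+1)$-element $E''\subseteq E'$; these are cosmetic, not substantive, differences.
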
 
\begin{proof}
Suppose that  $fg=0$. Let $\overline g:\K_{[n]}[E]\rightarrow \K$ be the linear form defined by setting $\overline g(\Pi_{x\in B} X_x):= g(B)$ for each $B\in [E]^{n}$. Then  $\overline g\circ D_f$ is $0$ on $\K_{[n+1]}[E]$. From  Corollary \ref{cor:surj}, the map from $\K_{[n+1]}[E']$ into $\K_{[n]}[E']$ induced by $D_f$ is surjective. Hence $\overline g$  is $0$ on $\K_{[n]}[E']$. Thus $g$ is zero on $[E']^n$ as claimed. 
\end{proof}

Going a step further, we get a weighted version of Gottlieb-Kantor theorem:
\begin{theorem} \label{kantorweight} Let $f:[E]^1\rightarrow \K$ and $g:[E]^n\rightarrow \K$. If $f$ does not take the value zero on a subset  of size at least  $2n+1$ and if $fg=0$ then  $g$ est identically zero on $[E]^n$.
\end{theorem}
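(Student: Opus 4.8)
The statement upgrades Lemma~\ref{lem:weak} from ``$g$ vanishes on $[E']^n$'' to ``$g$ vanishes on all of $[E]^n$'', under the hypothesis that $f$ is non-vanishing on \emph{some} subset $E'$ with $|E'|\geq 2n+1$ (note $E$ itself may be infinite or $f$ may have zeros outside $E'$). The natural approach is to show that if $g(B_0)\neq 0$ for some $B_0\in[E]^n$, then one can propagate the information ``$g$ vanishes on a large $f$-non-vanishing set'' to a contradiction by varying which large $f$-non-vanishing set we use. First I would fix $B_0\in[E]^n$ and argue that we may enlarge $E'$: pick any finite $E''\supseteq E'\cup B_0$ and, if needed, observe that Lemma~\ref{lem:weak} applies with $E'$ replaced by \emph{any} subset of size $\geq 2n+1$ on which $f$ does not vanish, in particular by $E'$ itself sitting inside the finite arena $E''$. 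So without loss of generality $E$ is finite and $B_0\subseteq E$.

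**Key steps.** With $E$ finite, the cleanest route is to run the derivation argument of Lemma~\ref{lem:weak} but on $\K_{[n]}[E]$ rather than $\K_{[n]}[E']$, exploiting that $f$ is non-zero on the large subset $E'$. Concretely: let $\overline{g}:\K_{[n]}[E]\to\K$ be the linear form with $\overline{g}(\prod_{x\in B}X_x)=g(B)$. As in Lemma~\ref{lem:weak}, $fg=0$ forces $\overline{g}\circ D_f=0$ on $\K_{[n+1]}[E]$. Now I would show the map $D_f:\K_{[n+1]}[E]\to\K_{[n]}[E]$ is surjective. By Corollary~\ref{cor:surj} (valid monomial-by-monomial, or after restricting to the subalgebra generated by the $X_x$ with $x\in E'$ and the single extra variables) it suffices to understand $D_e$; but $D_e$ on the \emph{full} $\K_{[n+1]}[E]$ need not be surjective if $|E|$ is just slightly bigger than $|E'|$ while $f$ vanishes on $E\setminus E'$. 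The fix is to use that any monomial of degree $n$ in $\K_{[n]}[E]$ involving $n\leq |E'|$ variables can be hit: for a target monomial $\prod_{x\in B}X_x$ with $B\in[E]^n$, write $D_f(X_y\cdot\prod_{x\in B}X_x)=f(\{y\})\prod_{x\in B}X_x+\sum_{x\in B}f(\{x\})X_y\prod_{x\in B\setminus\{x\}}X_x$; choosing $y\in E'\setminus B$ (possible since $|E'|\geq 2n+1>n$) gives $f(\{y\})\neq 0$, and one solves a triangular system over the finitely many relevant monomials — this is exactly where the $2n+1$ bound is consumed, since one needs room for both $B$ and an auxiliary variable, iterated, staying inside $E'$ where $f\neq 0$. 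Hence $\overline{g}=0$ on every monomial, i.e. $g\equiv 0$ on $[E]^n$.

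**Main obstacle.** The delicate point is that the surjectivity of $D_f:\K_{[n+1]}[E]\to\K_{[n]}[E]$ genuinely fails in general (e.g.\ if $f$ is zero on most of $E$); so one cannot simply quote Theorem~\ref{kantor} for the full set $E$. The real content is that the \emph{image} of $D_f$ already contains $\overline{g}$'s worth of information, which forces $g$ to vanish on $[E']^n$ by Lemma~\ref{lem:weak}, and then a \emph{second} application, sliding $E'$ to a different $(2n{+}1)$-subset $E'_1$ that still contains a prescribed element of $B_0$ and on which $f$ is non-zero, pins down $g(B_0)$. So the cleanest writeup is probably: by Lemma~\ref{lem:weak}, $g$ vanishes on $[E']^n$ for \emph{every} $(2n{+}1)$-element $E'$ on which $f$ is non-vanishing; given any $B_0\in[E]^n$, the hypothesis lets us find such an $E'$ containing $B_0$ (enlarge the given one, discarding zeros of $f$ only if the count still exceeds $2n$), whence $g(B_0)=0$. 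The one technical check is that a suitable $E'\supseteq B_0$ of size $2n+1$ with $f|_{E'}$ nowhere zero exists — which is immediate from the hypothesis that \emph{some} such set of size $\geq 2n+1$ exists, by taking the union with $B_0$ and then, if it is too large, using Lemma~\ref{lem:weak} in the form already available since any $(2n{+}1)$-subset of an $f$-non-vanishing set is again $f$-non-vanishing.
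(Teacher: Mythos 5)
Your final ``cleanest writeup'' has a genuine gap that the paper's proof is specifically built to address. You claim that for any $B_0 \in [E]^n$ one can find a set $E'\supseteq B_0$ of size $2n+1$ on which $f$ is nowhere zero, and then apply Lemma~\ref{lem:weak}. But if $B_0$ contains a point $x$ with $f(\{x\})=0$, \emph{no} such $E'$ exists: any set containing $B_0$ will also contain a zero of $f$. The hypothesis only guarantees \emph{some} $(2n+1)$-element set on which $f$ is non-vanishing, not one containing a prescribed $B_0$; ``taking the union with $B_0$'' destroys the non-vanishing property. So the argument only establishes $g\equiv 0$ on $[\,\mathrm{supp}(f)\,]^n$, which is Lemma~\ref{lem:weak}, not the theorem.

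The paper's proof handles exactly this missing case by induction on $n$. Fixing $B\in[E]^n$ it sets $F:= B\setminus E'$ with $E':=\mathrm{supp}(f)$ and $r:=|F|$. When $r=0$ one is in the Lemma~\ref{lem:weak} regime. When $r\geq 1$, one defines $g_r:[E']^{n-r}\to\K$ by $g_r(B'):=g(B'\cup F)$, and checks $fg_r=0$ on $[E']^{n-r+1}$: for $Q':=Q\setminus F$, the sum $\sum_{x\in Q'}f(\{x\})g(Q\setminus\{x\})$ equals $fg(Q)$ minus the terms indexed by $x\in F$, and both are zero since $f$ vanishes on $F$. Since $|E'|\geq 2n+1\geq 2(n-r)+1$, the induction hypothesis (in $n$) gives $g_r\equiv 0$, hence $g(B)=0$. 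Your sketch of the ``triangular system'' in the middle paragraph is heading in the same direction conceptually---a descending induction on the number of elements of $B$ outside $\mathrm{supp}(f)$---but you do not carry it out, and the version you then declare to be the cleanest writeup abandons it in favor of a step that fails. To repair your proof you would need to make that triangular/descending-induction argument precise, at which point you would have essentially reproduced the paper's reduction to $g_r$.
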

\begin{proof}  Set $E':=supp(f)$. According to our hypothesis, $E'\not =\emptyset$.  We prove the lemma by induction on $n$. If $n=0$, pick $x\in E'$. We have $fg(\{x\})= f(\{x\})g(\emptyset)$. Since $f(\{x\})\not =0$ and $\K$ is an integral domain,  $g(\emptyset)=0$. Thus $g=0$ and the conclusion of the lemma holds for $n=0$. Let $n\geq 1$. Let $B\in [E]^n$.  We claim that $g(B)=0$.  Let  $F:=B\setminus E'$ and $r:=\vert F\vert $.
If  $r=0$, that is $B\subseteq E'$, we get $g(B)=0$  from Lemma \ref{lem:weak}.  If  $r \not = 0$, we define  $g_{r}$ on  $[E']^{n-r}$, setting  $g_{r}(B'):=g(B'  \cup F)$ for each $B'\in [E']^{n-r}$.   Let  $Q'\in [E']^{n-r+1}$ and $Q:= Q'\cup F$. We  have  $fg_{r}(Q')= \sum_{x\in Q'}  f(\{x\})g(Q\setminus \{x\})
=\sum_{x\in Q}  f(\{x\})g(Q\setminus \{x\})-\sum_{x \in F}  f(\{x\})g(Q\setminus \{x\})$.  From our hypothesis  on $g$ and the fact that $f(\{x\} )=0$ for all $x\notin E'$, both terms on the right hand side of the latter equality are $0$, thus $fg_{r}(Q')=0$. Since $\vert E'\vert \geq 2(n-r)+1$, induction on $n$ applies.  Hence $g_r$ is $0$ on $[E']^{n-r}$. This yields $g(B)=0$, proving our claim. Hence the conclusion of the lemma holds for $n$. 
\end{proof}
\begin{theorem} \label{tau(1,n)}$\tau(1,n)= 2n$ 
\end{theorem}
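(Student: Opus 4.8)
The plan is to prove $\tau(1,n) = 2n$ by establishing the two inequalities $\tau(1,n) \leq 2n$ and $\tau(1,n) \geq 2n$ separately.

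For the upper bound $\tau(1,n) \leq 2n$, I would argue as follows. Suppose $f:[E]^1 \rightarrow \K$ and $g:[E]^n \rightarrow \K$ are non-zero with $fg = 0$. Let $T$ be any transversal of $supp(f) \cup supp(g)$ and suppose, for contradiction, that $|T| > 2n$. I want to contradict the hypothesis that $f$ and $g$ are both non-zero. The key input is Theorem \ref{kantorweight}: if $f$ does not vanish on a subset of size at least $2n+1$ and $fg = 0$, then $g \equiv 0$ on $[E]^n$. So the strategy is: if $supp(f)$ itself has at least $2n+1$ elements, we immediately get $g = 0$, a contradiction; hence $|supp(f)| \leq 2n$. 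But then, since $T$ is a transversal of $supp(g)$ as well, and $supp(f)$ is finite of size $\leq 2n$, I need to show $supp(f)$ alone is already a transversal of $supp(g)$ — equivalently, every $B \in supp(g)$ meets $supp(f)$. This is exactly the contrapositive direction: if some $B \in supp(g)$ is disjoint from $supp(f)$, I would like to derive $f = 0$. Here I would use the symmetric/dual argument: localize $f g = 0$ at sets $Q = B \cup \{x\}$ with $x \in supp(f)$; since $g$ is $R$-free of... more carefully, use that for $Q \supseteq B$ with $|Q| = n+1$ and $Q \setminus B = \{x\}$, the product formula gives $0 = fg(Q) = \sum_{x' \in Q} f(\{x'\}) g(Q \setminus \{x'\})$, and isolating the term where we remove the single point outside $B$ should force $f(\{x\}) g(B) = 0$ for suitable $Q$, hence (since $g(B) \neq 0$ and $\K$ is a domain) $f(\{x\}) = 0$ for every $x$, i.e. $f = 0$ — contradiction. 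So $supp(f)$ is a transversal of $supp(f) \cup supp(g)$ of size $\leq 2n$, giving $\tau(1,n) \leq 2n$. The main obstacle is making the last localization argument airtight when $supp(f)$ is small relative to $B$; one may need to choose several points $x$ and combine, but the characteristic-zero / integral-domain hypothesis on $\K$ should carry it through.

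For the lower bound $\tau(1,n) \geq 2n$, I would exhibit an explicit example. Take $E$ with $|E| \geq 2n+1$ (indeed we can take $|E| = 2n$ or slightly more, but to realize transversality exactly $2n$ we work on a set of size, say, $2n$ plus enough room), choose a $2n$-element subset $E' = \{a_1, \dots, a_n, b_1, \dots, b_n\}$ partitioned into $n$ pairs $\{a_i, b_i\}$. Define $f:[E]^1 \rightarrow \K$ by $f(\{a_i\}) = 1$, $f(\{b_i\}) = -1$ for $1 \le i \le n$, and $f$ zero elsewhere; this is the "weight" analogue used implicitly when $\tau(1,n)=2n$ is said to amount to the Gottlieb--Kantor theorem. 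Define $g$ to be supported exactly on the set of transversals of the family of pairs, or more simply on a single carefully chosen configuration forcing the supports to require all $2n$ points. The cleanest choice: let $g$ be the indicator-type map that makes $fg = 0$ by a telescoping/cancellation identity across the pairs $\{a_i,b_i\}$, while $supp(f) \cup supp(g)$ has no transversal of size $< 2n$ because each pair $\{a_i,b_i\}$ must be hit and additionally each $a_i$ (or $b_i$) appears in some $B \in supp(g)$ not containing the partner. I expect the honest construction to mirror the one in Lemma \ref{discharging}/the remark "$\tau(m,n) \geq (m+1)(n+1)-2$", which for $m=1$ gives $2(n+1)-2 = 2n$; so I would simply invoke or reproduce that general lower bound construction specialized to $m = 1$.

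Combining the two bounds yields $\tau(1,n) = 2n$. The genuinely substantive half is the upper bound, which rests entirely on Theorem \ref{kantorweight} (the weighted Gottlieb--Kantor theorem) together with the elementary localization trick to pass from "$supp(f)$ is large" to "$supp(f)$ is a transversal of $supp(g)$"; the lower bound is a routine explicit example. I anticipate the main technical care is needed in verifying that when $supp(f)$ has size $\leq 2n$ it still meets every member of $supp(g)$ — that is the one place where the argument could fail and where the integral-domain hypothesis on $\K$ is used in an essential way.
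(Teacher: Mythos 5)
Your proposal is correct and follows essentially the same route as the paper: the upper bound rests on Theorem \ref{kantorweight} to bound $|supp(f)|$ by $2n$, followed by showing $supp(f)$ is itself a transversal of $supp(g)$ — which you do via a direct localization at $Q=B\cup\{x\}$, while the paper cites the general discharging Lemma \ref{discharging} with $\tau(0,n)=0$; these are the same argument, and contrary to your worry the localization step goes through cleanly since every $x'\in B$ contributes $f(\{x'\})g(Q\setminus\{x'\})=0$, leaving exactly $f(\{x\})g(B)=0$. For the lower bound your pairs construction (signs $\pm1$ placed on $f$, $g$ the indicator of transversals of the $n$ pairs) is a valid sign-swapped variant of the paper's Claim \ref{claim:geq} (which instead uses $f=e$ and puts the signs on $g$); note only that invoking Lemma \ref{lem:minor} for $m=1$ would be circular, since that lemma's base case is itself Claim \ref{claim:geq}, so the direct construction you sketch is the one to use.
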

\begin{proof} 
Trivially, the formula holds if $n=0$. Hence, in the sequel, we suppose $n\geq 1$. 
\begin{claim}\label {claim: leq}$\tau(1,n)\leq 2n$ 
\end{claim}
\noindent {\bf Proof of Claim \ref{claim: leq}.}Let $f$ and  $g$ be non identically zero such that $fg=0$. From Theorem \ref{kantorweight}, the support $S$ of $f$ has at most 
$2n$ elements. From Lemma \ref{discharging} $\vert supp(f)\cup(supp(g)\vert \leq \vert S\vert +\tau(0,n)=\vert S\vert \leq 2n$.\endproof

For the converse inequality, we prove that:
\noindent\begin{claim}\label{claim:geq}There is a $2n$ element set $E$ and a map $g:[E]^n\rightarrow \K$  such that $eg=0$ and $g\not=0$.
\end{claim}
{\bf Proof of  Claim \ref{claim:geq}} Let $E:= \{0, 1\}\times \{0, \dots, n-1\}$. Set $E_{i}:= \{0,1\}\times \{i\}$ for $i<n$. Let $B\in [E]^n$. Set  $g(B):=0$ if $B$ is not a transversal of the $E_i$'s, $g(B):= -1$ if $B$ is a transversal  containing an odd number of elements of the form  $(0, i)$, $g(B)=1$ otherwise. Let $Q\in [E]^{n+1}$. If $Q$ is not a transversal of the $E_i$'s  then $g(B)=0$ for every $B\in [Q]^n$ hence $eg(Q)=0$. If $Q$ is a transversal,  then there is a unique index $i$ such that $E_i\subseteq Q$. In this case,  the only members of $[Q]^n $ on which $g$ is non-zero are $Q\setminus\{(0,i)\}$ and $Q\setminus \{(1,i)\}$; by our choice, they have opposite signs, hence $eg(Q)=0$.  
\endproof

Since in the example above $\tau (supp(e))= 2n$,  we have $\tau(1,n)\geq 2n$.  With this inequality, the proof of Theorem \ref{tau(1,n)} is complete. \end{proof}

\section{A lower bound for $\tau(m,n)$}



\begin{lemma}\label{lem:minor} $\tau(m,n)\geq (m+1)(n+1)-2$ for all $m,n\geq 1$.
\end{lemma}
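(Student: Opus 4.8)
The statement to prove is a lower bound: $\tau(m,n)\geq (m+1)(n+1)-2$ for all $m,n\geq 1$. By definition of $\tau(m,n)$ as the least $t$ for which the conclusion of Theorem \ref{main} holds, it suffices to exhibit, for some set $E$ and some field $\K$ of characteristic zero, a pair of non-zero maps $f:[E]^m\rightarrow\K$ and $g:[E]^n\rightarrow\K$ with $fg=0$ but $\tau(supp(f)\cup supp(g))=(m+1)(n+1)-2$. So this is a pure construction problem, and the whole game is to write down the right $f$ and $g$.

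The construction I would use is a product/alternating-sign gadget generalizing the $m=1$ case of Claim \ref{claim:geq} in the proof of Theorem \ref{tau(1,n)}. Take $E$ to be a grid of size roughly $(m+1)(n+1)$: say partition $E$ into $n+1$ "columns" $E_0,\dots,E_n$, each of size $m+1$, so $|E|=(m+1)(n+1)$. Define $f$ to be supported on the $m$-element subsets obtained by deleting one point from a single column — equivalently, $f$ "tests" a column — with suitable signs recording which point was deleted; dually, define $g$ to be supported on $n$-element transversal-like sets meeting $n$ of the columns, again with alternating signs. The key design constraint is that for every $(m+n)$-element $Q$, the sum $fg(Q)=\sum_{P\in[Q]^m}f(P)g(Q\setminus P)$ telescopes to zero: when $Q$ is "degenerate" all terms vanish, and when $Q$ is the generic configuration (one column entirely inside $Q$, the remaining $n$ points hitting $n$ other columns, leaving exactly two points of $E$ outside $Q$) the surviving terms pair up with opposite signs exactly as in Claim \ref{claim:geq}. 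Then one checks $supp(f)\cup supp(g)$ has transversality $|E|-1=(m+1)(n+1)-1$ — or, more carefully, one arranges the support so that a single point can be omitted from any transversal, giving transversality exactly $(m+1)(n+1)-2$.

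The bookkeeping to get the constant $(m+1)(n+1)-2$ rather than $(m+1)(n+1)-1$ is where I expect the real subtlety: one needs the supports of $f$ and $g$ to jointly "waste" two coordinates, analogously to how in the $m=1$ example $E$ has $2n$ points but the gadget is really built so that no single point is forced — mirroring also Case 1 of Lemma \ref{discharging} where the bound $m-1\leq\tau(m-1,n)$ is witnessed by a set of size exactly $m-1$. I would first nail down the $m=2$, general $n$ case explicitly (product of the $n=1$, $m=2$ gadget with the $m=1$ alternating gadget) to calibrate the signs and the transversality count, then state the general construction as a tensor-style product of the two one-dimensional gadgets, verifying $fg=0$ by the distributivity of the set-algebra product over this product structure. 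The hardest single step is verifying $fg(Q)=0$ for the generic $Q$: this requires tracking which splittings $Q=P\sqcup(Q\setminus P)$ land in $supp(f)\times supp(g)$ and checking the signs cancel in pairs, which is a finite but genuinely combinatorial case analysis over the position of the "missing" points relative to the column structure.
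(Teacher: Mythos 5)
Your plan is a different construction from the paper's, and unfortunately it does not work as described. The paper takes $E$ to be the disjoint union of $m$ blocks $E_0,\dots,E_{m-1}$, \emph{each} of size $2n$ (so $\vert E\vert=2nm$); it lets $f$ be the $0$--$1$ indicator of the ``block transversals'' (the $m$-sets hitting every $E_i$), and lets $g$ be a direct sum of alternating gadgets $g_i$ of full support on each $[E_i]^n$. The product $fg(Q)$ then collapses to $eg_i(Q\cap E_i)$ for a single index $i$, which vanishes; and the transversality count is $\tau(supp(f))=2n$ (a transversal of all block-transversals must swallow some entire $E_i$) plus $(m-1)(n+1)$ for the remaining blocks, giving $(m+1)(n+1)-2$. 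Your proposal instead uses $n+1$ columns of size $m+1$, with $f$ supported on $m$-subsets of columns and $g$ on partial transversals of the columns. This shape is already inconsistent with the known value $\tau(1,n)=2n$: for $m=1$ every singleton is an $m$-subset of a column, so $\tau(supp(f))\geq\vert E\vert=2(n+1)>2n$, and hence such a pair with $fg=0$, $f,g\neq0$ cannot exist (for $m=n=1$ one checks directly that any degree-$1$ pair on a $4$-set with full $supp(f)$ forces $g=0$). Your description of the ``generic'' $Q$ also slips on the counting: $\vert Q\vert=m+n$, while ``leaving exactly two points of $E$ outside $Q$'' requires $\vert Q\vert=(m+1)(n+1)-2$; these agree only when $m=n=1$. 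Finally, the two load-bearing steps --- that $fg=0$ holds and that the joint transversality is exactly $(m+1)(n+1)-2$ --- are exactly the parts you defer, so the argument has not been carried out. The essential idea you would need, and which the paper supplies, is a mechanism making $\tau(supp(f))$ \emph{small} ($=2n$, not $\vert E\vert$) while still forcing $fg=0$: this is why the paper indexes blocks by $m$ rather than columns by $n+1$, and uses the ``hits every block'' indicator for $f$ rather than letting $f$ be supported on subsets of single blocks.
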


\begin{proof}
For $m=1$ this inequality  was obtained in Claim \ref{claim:geq}. For the case $m>1$, we need the following improvement of Claim \ref{claim:geq}.

\begin{claim} \label{claim:homog}Let $n \geq 1$. There is a $2n$ element set $E$ and a map $g:[E]^n\rightarrow \K$  such that $eg=0$ and  $supp(g)=[E]^n$.
\end{claim}

{\bf Proof of Claim \ref{claim:homog}.}  Fix a $2n$-element set $E$. From 
Claim \ref{claim:geq}  and the fact that the symmetric group $\mathfrak S_E$ acts transitively on $[E]^n$, we get for each $B\in [E]^n$  some $g_B$ such that  $eg_B=0$ and $B\in supp(g_B)$.  Next, we observe that  a map $g:[E]^{n}\rightarrow \K$ satisfies $eg=0$ if and only if  $g$ belongs to the kernel of  the linear map $T: \K^{[E]^n}\rightarrow \K^{[E]^{n+1}}$ defined by setting $T(g)(Q):= \sum_{B\in  [Q]^{n}}g(B)$ for all $g$, $Q\in [E]^{n+1}$. To conclude, we apply the  claim below with $k:={2n \choose n}$.
\begin{claim}\label{claim:homog2} Let $e_1:= (1,0\dots,0),\dots, e_i:=(0, \dots, 1, 0, \dots, 0),\dots, e_k:= (0, \dots, 1)$ be the canonical basis of $\K^k$ and let $H$ be a subspace of $\K^k$. Then $H$ contains a vector with all its coordinates which are non-zero if and only if for every coordinate $i$ it contains some vector with this coordinate non-zero.\end{claim}
{\bf Proof of Claim \ref{claim:homog2}.} This assertion amounts to the fact that a vector space on an infinite field is not the union of finitely many proper subspaces. 
\endproof

Let $E$ be the disjoint union of $m$ sets $E_{0},\dots, E_{i}, \dots E_{m-1}$, each of  size $2n$. For each $i$, let $g_i:  [E_i]^{n}\rightarrow \K$ such that $\sum_{B\in [Q]^n}g_i(B)=0$ for each $Q\in [E_i]^{n+1}$ and $supp(g_i)= [E_i]^{n}$ (according to Claim \ref  {claim:homog}  such a $g_i$ exists).  Let $g:[E]^n \rightarrow \K$ be the "direct sum" of the $g_i$'s:
$g(B):=g_i(B)$ if $B\in [E_i]^n$, $g(B):=0$ otherwise. Let $f: [E]^m\rightarrow \K$ defined by setting $f(A):=1$ if $A\cap E_i\not =\emptyset$ for all $i<m$ and $0$ otherwise. 
Then, by a similar   argument as in Claim \ref{claim:geq},  $fg=0$. Next,  a  transversal of  $supp(f)$ must contains some $E_i$ (thus $\tau(supp(f))= 2n$).  And, also,  a transversal of $supp(g)$ must be a  transversal of each of the $supp(g_i)$'s. Since $supp(g_i)=[E_i]^n$, $\tau (supp(g_i))= n+1$, hence $\tau(supp(g))=(n+1)m$. We get easily that  $\tau(supp(f)\cup supp(g))= 2n +(n+1)(m-1)= mn+m+n-1$. This completes the proof  of Lemma \ref{lem:minor}.
\end{proof}
\begin{example}
The lemma above gives $\tau(2,2)\geq 7$.
 An example  illustrating this inequality is quite simple: let $E$ be made of  two squares, let $f$ be the map giving  value $-1/2$ on each side of the 
   squares, value $1$ on the diagonals; let $g$ be giving value $1$ on each pair meeting the two  squares.  Then for every $x\in E$, $E\setminus\{x\}$ is a minimal transversal of $supp(f)\cup supp(g)$. 
\end{example}

%




\end{document}